\def\T{{\mathcal T}}
\def\E{{\mathcal E}}
\def\3bar{{|\hspace{-.02in}|\hspace{-.02in}|}}
\def\ljump{{[\hspace{-0.02in}[}}
\def\rjump{{]\hspace{-0.02in}]}}
\newtheorem{remark}{Remark}[section]
\newtheorem{example}{\bf Example}
\newtheorem{algorithm}{Algorithm}
\title{A weak Galerkin method and its two-grid algorithm for the quasi-linear elliptic problems of non-monotone type}
\author{Peng Zhu\thanks{College of
Data Science, Jiaxing University, Jiaxing, Zhejiang 314001, China
(pzh@zjxu.edu.cn). This research was supported in part by Zhejiang Provincial Natural Science Foundation of China (LY19A010008) and Natural Science Foundation of China (12071184).}
\and
Shenglan Xie\thanks{College of Information Engineering, Jiaxing Nanhu University, Jiaxing, Zhejiang 314001, China (slxie@jxnhu.edu.cn).}
}
\begin{document}

\maketitle

\begin{abstract}
In this article, a  weak Galerkin (WG)  method is firstly presented and analyzed for the quasi-linear elliptic problem of non-monotone type. By using Brouwer's fixed point technique, the existence of WG solution and error estimates in both the energy norm and the $L^2$ norm are derived. Then an efficient two-grid WG method is introduced to improve the computational efficiency. The convergence error of the two-grid WG method is analyzed in the energy norm. Numerical experiments are presented to verify our theoretical findings.
\end{abstract}

\begin{keywords}
weak Galerkin, finite element method, quasi-linear elliptic problem,
non-monotone type, two-grid method
\end{keywords}

\begin{AMS}
Primary, 65N15, 65N30, 76D07; Secondary, 35B45, 35J50
\end{AMS}
\pagestyle{myheadings}

\section{Introduction}

Consider the quasi-linear elliptic problem of the form
\begin{subequations}
\begin{eqnarray}
-\nabla\cdot(a(x,u){\nabla u})&=&f,\quad \mbox{in}\;\Omega,\label{pde}\\
u&=&g,\quad \mbox{on}\;\partial\Omega,\label{pde-bc}
\end{eqnarray}
\end{subequations}
where $\Omega$ is a bounded polygonal domain in $\mathbb{R}^2$. We suppose that $a(x,u)$ is a twice continuously differentiable function in $\overline{\Omega}\times \mathbb{R}$ and all the derivatives of $a(\cdot,\cdot)$ through second order are bounded in $\overline{\Omega}\times \mathbb{R}$. We also assume that there exist positive constants $\alpha_0, \alpha_1$ such that
\[0<\alpha_0\leq a(x,u)\leq \alpha_1.\]
For sake of notational simplicity, we denote $a(x,u)$ by $a(u)$ in the rest of this paper.

The weak Galerkin (WG) method is a novel finite element framework for solving partial differential equations. It was introduced in \cite{wy,wy-mixed} for the numerical solution of second order elliptic problems. Since then, there has been considerable interest in WG methods for the numerical solution of a wide range of partial differential equations. We refer the reader to \cite{shw18,gm,ggz,lyzz,wg-poly,wg-red,wg-bi1,wg-bi2,wg-bi3,mwy-helm,mwyz-maxwell,mwyz-interface,wg-bi4,yzz,yz,sf-wg,wg-bi6,yzz1}, and the references therein for details.  However, there are few papers that are concerned to the nonlinear elliptic problems.  To the best of our knowledge, only references \cite{wg-poly,shw18,yzz1} conducted their investigations in this direction.

For the second order elliptic quasi-linear PDEs, the existence of solutions of the WG methods was shown in \cite{wg-poly} by a Schauder's fixed point argument. However, the uniqueness and the error estimations of the numerical approximations are restricted only to the linear PDEs, and have not been addressed for the nonlinear ones. Recently in \cite{shw18}, the authors gave the well-posedness and error estimate in the energy norm for the monotone quasi-linear PDEs. Most recently, \cite{yzz1} investigated the stabilizer-free WG methods \cite{yz,sf-wg} on polytopal meshes for a class of quasi-linear elliptic problems of monotone type.

Direct application of the techniques presented in \cite{shw18,yzz1} to establish the a priori error estimates of the WG solutions for non-monotone type quasi-linear elliptic problem is not possible because the associated differential operators may not satisfy monotonicity property. Non-monotone type quasi-linear elliptic problems have been studied by conforming finite element methods \cite{dd}, mixed finite element methods \cite{m85,park95}, discontinuous Galerkin methods \cite{gp, gnp}.

The first aim of this paper is to make an effort for conducting a priori error analysis of the WG method for solving non-monotone quasi-linear elliptic problems. Following the ideas in \cite{os07}, we employ Brouwer's fixed point theorem to prove the existence of the discrete solution, and in turn derived the a priori error estimates in a mesh-dependent energy norm and in the $L^2$ norm.

As it is well known, the two-grid method is an efficient algorithm for solving nonlinear partial differential equations \cite{xu94,xu96}.  Two-grid finite volume element method \cite{bg07} and two-grid discontinuous Galerkin method \cite{bg11} were further investigated for non-monotone quasi-linear elliptic problems. Another aim of this paper is to propose an efficient WG method by adopting the two-grid idea for the quasi-linear elliptic problem. Convergence of the two-grid WG method is rigorously analyzed.
As far as we know, the present work is the first attempt to apply and analyze the two-grid technique in the setting of the WG method.

The outline of this paper is as follows. Section 2 introduces the WG method for the problem (\ref{pde})-(\ref{pde-bc}). In Section 3, we derive optimal order error estimates of the WG method in both the energy norm and the $L^2$ norm. The two-grid algorithm of the WG method is proposed in Section 4, which is then followed by derivation of its convergence analysis in the energy norm. Section 5 carries out numerical experiments to verify our theoretical findings. A summary is given in Section 6.

\section{Weak Galerkin Finite Element Methods}\label{Sec:wg-fem}

First of all, let us introduce the concept of weak function.
Let $K$ be any polygonal domain with boundary $\partial K$. A weak function on $K$ refers to a function $v=\{v_0, v_b\}$ such that $v_0\in L^2(K)$ and $v_b \in H^{1/2}(\partial K)$, where $v_0$ means the value of $v$ in $K$, and $v_b$ represents the boundary value of $v$. Note that $v_b$ may not necessarily be related to the trace of $v_0$ on $\partial K$. Let $W(K)$ be the set of weak functions on $K$, i.e.,
\[ W(K):=\{v=\{v_0,v_b\}: v_0\in L^2(K), v_b\in H^{1/2}(\partial K)\}.\]
It is worth to point out that a function $v\in H^1(K)$ can be viewed as a weak function $\{v_0, v_b\}$ of $W(K)$ with $v_0=v|_K$ and $v_b=v|_{\partial K}$.

Let ${\mathcal T}_h$ be a shape regular polygonal partition of the domain $\Omega$ \cite{wy-mixed}.
Denote by ${\cal E}_h$ the set of all edges in ${\cal
T}_h$, and let ${\cal E}_h^0={\cal E}_h\backslash\partial\Omega$ be
the set of all interior edges. For each element $K\in\T_h$, we denote by $h_K$ the diameter of $K$, i.e., $h_K=\mathrm{diam}(K)$. Similarly, let $h_e$ be the length of an edge $e\in\E_h$.

On this partition of $\T_h$, we introduce the following broken Sobolev space
\begin{align*}
H^m(\T_h):=\{v\in L^2(\Omega): v|_K\in H^1(K), \;\forall K\in\T_h\},
\end{align*}
for any integer $m\geq 0$.

Let $\mathbb{P}_m(D)$ denote the set of polynomials defined on $D$ with degree no more than $m$, where $D$ may be an element $K$ of $\T_h$ or an edge $e$ of $\E_h$. In what follows, we often consider the broken polynomial spaces
\[\mathbb{P}_m(\T_h):=\{v\in L^2(\Omega): v|_K\in \mathbb{P}_m(K), \,\,\forall K\in\T_h\},\]
and
\[\mathbb{P}_m(\E_h):=\{v\in L^2(\E_h): v|_{e}\in \mathbb{P}_m(e), \,\,\forall e\in\E_h\}.\]

Then a WG finite element space $V_h$ associated with $\T_h$ for $k\geq 1$ is defined by
\begin{equation}
V_h=\{v=\{v_0, v_b\}:\ v_0\in \mathbb{P}_{k}(\T_h),
 v_b\in \mathbb{P}_{k}(\E_h)\}.
\end{equation}

Denote by $V_h^0$ a subspace of $V_h$ with vanishing traces,
\begin{align}\label{Vh0}
V_h^0=\{v=\{v_0,v_b\}\in V_h, \ v_b|_e=0,\
e\subset\partial K\cap\partial\Omega\}.
\end{align}

For the sake of simplicity, we introduce the following notations: for any $v, w\in H^1(\T_h)$,
\begin{align*}
(v,w)_{\T_h} =\sum_{K\in\T_h}(v,w)_K,\quad \mbox{where } (v,w)_K=\int_K vw \mathrm{d}\bm{x},
\end{align*}
and
\begin{align*}
 \langle v,w\rangle_{\partial\T_h}&=\sum_{K\in\T_h} \langle v,w\rangle_{\partial K},
 \quad \mbox{where } (v,w)_{\partial K} =\int_{\partial K} vw \mathrm{d}s.
\end{align*}

\begin{definition}[Weak Gradient] For any function $v=\{v_0, v_b\}\in V_h$, its weak gradient $\nabla_{w}v$,
is piecewisely defined as the unique polynomial $(\nabla_{w}v)|_K \in [\mathbb{P}_{k-1}(K)]^2$ such that
\begin{equation}\label{wl}
(\nabla_{w} v, \bm{\phi})_K = -(v_0, \nabla\cdot\bm{\phi})_K+\langle v_b, \bm{\phi}\cdot\bm{n}\rangle_{\partial K},\quad
\forall \bm{\phi}\in [\mathbb{P}_{k-1}(K)]^2,
\end{equation}
for any $K\in\mathcal{T}_h$.
\end{definition}

Now, we are ready to present our weak Galerkin finite element method for the problem (\ref{pde})-(\ref{pde-bc}).

\begin{algorithm}[The WG method]\label{alg:nonlin}
The  weak Galerkin finite element scheme for solving problem (\ref{pde})-(\ref{pde-bc}) is defined as follows: find $u_h=\{u_0,u_b\}\in V_h$ such that $u_b=\mathcal{Q}_bg$ on $\partial\Omega$ and the following equation
\begin{equation}\label{wg}
A_h(u_h; u_h, v_h)=(f,\;v_0), \quad\forall\
v_h=\{v_0, v_b\}\in V_h^0,
\end{equation}
where
\begin{align}\label{bilin-form}
A_h(u_h; v_h, w_h) = (a(u_0)\nabla_{w} v_h, \nabla_{w} w_h)_{\T_h}+s_h(v_h, w_h),
\end{align}
with the stabilization term $s_h(\cdot,\cdot)$ is defined by
\[
s_h(v_h, w_h):=\sum_{K\in\T_h}h_K^{-1}\langle v_0-v_b, w_0-w_b\rangle_{\partial K}, \quad \forall v_h, w_h\in V_h,
\]
where $\mathcal{Q}_b$ is an $L^2$ projection defined by (\ref{Qb}).
\end{algorithm}

\section{Error analysis}\label{Sec:err-eqn}
In this section, we shall derive the a priori error estimates of the WG method (\ref{wg}) for solving the quasi-linear elliptic problem (\ref{pde})-(\ref{pde-bc}). To this end, we firstly introduce notations and some useful lemmas in Sect. 3.1 and then derive an error equation for the WG method (\ref{wg}) in Sect. 3.2. Using this error equation, a fixed point mapping is constructed and discussed in Sect. 3.3. Finally, by the Brouwer's fixed point theorem the error estimates in both energy norm and $L^2$ norm are concluded in Sect. 3.4.

\subsection{Preliminary}

In order to analyze the WG method (\ref{wg}), we introduce the energy norm $\3bar\cdot\3bar$ over $V_h^0$ by
\begin{equation}\label{3bar-norm}
\3bar v\3bar=(\sum_{K\in\T_h}\3bar v\3bar_K^2)^{1/2},
\end{equation}
with
\begin{align*}
\3bar v\3bar_K=(\|\nabla_wv\|_{L^2(K)}^2+h_K^{-1}\|v_0-v_b\|_{L^2(\partial K)}^2)^{1/2},
\end{align*}
and we also need the $H^1$-like norm $\|\cdot\|_{1,h}$
\begin{equation}\label{H1-norm}
\|v_h\|_{1,h} =(\sum_{K\in\T_h}\|\nabla v_h\|_{1,h,K}^2)^{1/2},
\end{equation}
for all ${v_h}\in V_h + H^1(\T_h)$, where
\begin{align*}
\|\nabla v_h\|_{1,h,K}=
(\|\nabla v_0\|_{L^2(\T_h)}^2+h_K^{-1}\|v_0-v_b\|_{L^2(\partial K)}^2)^{1/2}.
\end{align*}
It is easy to see that $\|\cdot\|_{1,h}$ is indeed a norm on the finite element space $V_h^0$.

\begin{lemma}\label{lem:happy} There exist two positive constants $C_1$ and $C_2$ such
that for any $v=\{v_0,v_b\}\in V_h$, we have
\begin{equation*}
C_1 \|v\|_{1,h}\le \3bar v\3bar \leq C_2 \|v\|_{1,h}.
\end{equation*}
\end{lemma}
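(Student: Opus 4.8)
The plan is to prove the two-sided bound by relating the weak gradient $\nabla_w v$ on each element $K$ to the piecewise gradient $\nabla v_0$ plus a boundary correction term, so that the only genuine work is comparing $\|\nabla_w v\|_{L^2(K)}$ with $\|\nabla v_0\|_{L^2(K)}$; the stabilization contribution $h_K^{-1}\|v_0-v_b\|_{L^2(\partial K)}^2$ is literally common to both norms and needs no attention. First I would fix $K\in\T_h$ and, starting from the definition \eqref{wl} of the weak gradient, integrate by parts on the right-hand side to obtain, for all $\bm\phi\in[\mathbb{P}_{k-1}(K)]^2$,
\[
(\nabla_w v,\bm\phi)_K = (\nabla v_0,\bm\phi)_K + \langle v_b-v_0,\bm\phi\cdot\bm n\rangle_{\partial K}.
\]
This identity is the workhorse: it exhibits $\nabla_w v$ as the $L^2(K)$-projection onto $[\mathbb{P}_{k-1}(K)]^2$ of $\nabla v_0$ plus a term controlled by $v_0-v_b$.

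To get the upper bound $\3bar v\3bar_K\le C\|\nabla v_h\|_{1,h,K}$, I would take $\bm\phi=\nabla_w v$ in the identity, apply Cauchy–Schwarz on both terms, and then control $\langle v_b-v_0,\nabla_w v\cdot\bm n\rangle_{\partial K}$ using the trace/inverse inequality $\|\bm\phi\cdot\bm n\|_{L^2(\partial K)}\le C h_K^{-1/2}\|\bm\phi\|_{L^2(K)}$ valid for polynomials on a shape-regular element. This yields $\|\nabla_w v\|_{L^2(K)}\le \|\nabla v_0\|_{L^2(K)} + C h_K^{-1/2}\|v_0-v_b\|_{L^2(\partial K)}$, and squaring and adding the common stabilization term gives $\3bar v\3bar_K^2\le C\|\nabla v_h\|_{1,h,K}^2$; summing over $K$ gives the right inequality with a constant $C_2$ depending only on shape regularity and $k$.

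For the lower bound $\|\nabla v_h\|_{1,h,K}\le C\3bar v\3bar_K$ I need to bound $\|\nabla v_0\|_{L^2(K)}$ by $\|\nabla_w v\|_{L^2(K)}$ and the boundary term. Since $v_0\in\mathbb{P}_k(K)$, its gradient $\nabla v_0$ already lies in $[\mathbb{P}_{k-1}(K)]^2$, so I may choose $\bm\phi=\nabla v_0$ in the identity above; this gives $\|\nabla v_0\|_{L^2(K)}^2=(\nabla_w v,\nabla v_0)_K-\langle v_b-v_0,\nabla v_0\cdot\bm n\rangle_{\partial K}$, and then Cauchy–Schwarz together with the same trace-inverse inequality $\|\nabla v_0\cdot\bm n\|_{L^2(\partial K)}\le Ch_K^{-1/2}\|\nabla v_0\|_{L^2(K)}$ lets me absorb the resulting $\|\nabla v_0\|_{L^2(K)}$ factor, yielding $\|\nabla v_0\|_{L^2(K)}\le \|\nabla_w v\|_{L^2(K)}+Ch_K^{-1/2}\|v_0-v_b\|_{L^2(\partial K)}\le C\3bar v\3bar_K$. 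Adding the common boundary term and summing over $K$ finishes the proof with a constant $C_1$; I would note that since $\|\cdot\|_{1,h}$ is already known to be a norm on $V_h^0$, the equivalence immediately transfers normedness to $\3bar\cdot\3bar$.

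The only mild obstacle is making the scaling of the trace–inverse inequality uniform over the shape-regular polytopal partition: one needs $\|p\|_{L^2(\partial K)}\le Ch_K^{-1/2}\|p\|_{L^2(K)}$ for polynomials $p$ of degree $\le k$ on a general (possibly non-convex, many-sided) element $K$, with $C$ independent of $K$. This is standard under the shape-regularity assumptions of \cite{wy-mixed} (decompose $K$ into a bounded number of simplices of comparable diameter, or invoke the established polytopal trace inequality), so I would simply cite it rather than reprove it. Everything else is Cauchy–Schwarz and a Young's inequality absorption, so no delicate estimates are required.
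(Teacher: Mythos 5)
Your proof is correct and is exactly the standard argument: the paper itself gives no proof here (it declares the result trivial and refers to Lemma 5.3 of \cite{wg-red}), and the argument it points to is precisely your identity $(\nabla_w v,\bm\phi)_K=(\nabla v_0,\bm\phi)_K+\langle v_b-v_0,\bm\phi\cdot\bm n\rangle_{\partial K}$ combined with the polynomial trace inequality (\ref{trace2}), testing with $\bm\phi=\nabla_w v$ for one direction and $\bm\phi=\nabla v_0\in[\mathbb{P}_{k-1}(K)]^2$ for the other. In effect you have supplied the proof the paper omits by citation.
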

\begin{proof}
The proof is trivial. The interesting reader is referred to the proof of Lemma 5.3 in \cite{wg-red}.
\end{proof}

In what follows, the trace inequality and inverse inequality are frequently used in our analysis, which state as \cite{ern}: for any $p, q\in [1, \infty)$, there holds
\begin{equation}\label{trace1}
\|\phi\|_{L^p(\partial K)} \leq C \left( h_K^{-1/p} \|\phi\|_{L^p(K)} + h_K^{1-1/p}
\|\nabla \phi\|_{L^p(K)}\right), \quad \forall \phi\in W_p^1(K),
\end{equation}
and
\begin{equation}\label{trace2}
\|\phi_h\|_{L^p(\partial K)} \leq Ch_K^{-1/p} \|\phi_h\|_{L^p(K)}, \quad \forall \phi_h\in \mathbb{P}_k(K),
\end{equation}
and
\begin{equation}\label{inv}
\|\phi_h\|_{L^p(K)} \leq Ch_K^{2(1/p-1/q)} \|\phi_h\|_{L^q(K)}, \quad \forall \phi_h\in \mathbb{P}_k(K).
\end{equation}
{Especially, for $p=\infty$ and $q=2$, we have
\begin{equation}\label{inv2}
\|\phi_h\|_{L^{\infty}(K)} \leq Ch_K^{-1} \|\phi_h\|_{L^2(K)}, \quad \forall \phi_h\in \mathbb{P}_k(K).
\end{equation}
}

In \cite{b03, ls03}, the following Poincar\'{e} type inequality  has been proved  for discontinuous functions in the broken Sobolev space $H^1(\T_h)$.
\begin{lemma}[Poincar\'{e} type inequality]\label{lem:P-dg}
For any $v\in H^1(\T_h)$, there exists a constant $C_P>0$ independent of mesh size $h$ and $v$ such that for any $p\in [1,\infty)$
\begin{align*}
\|v\|_{L^p(\Omega)}\leq C_P \|v\|_{dg},
\end{align*}
with the norm $\|\cdot\|_{dg}$ is defined by
\begin{align}\label{dg-norm}
\|v\|_{dg} =\{\|\nabla v\|_{L^2(\T_h)}^2+\sum_{e\in\E_h}h_e^{-1}\|\ljump{v}\rjump\|_{L^2(e)}^2\}^{1/2},
\end{align}
where $\ljump{v}\rjump$ is the value jump of the function $v$ across an interior edge $e$, which is defined as: if $e$ is an interior edge shared by two elements $K_1$ and $K_2$, i.e. $e=\partial{K}_1\cap\partial{K}_2$, we set
$\ljump{v}\rjump|_e = v|_{K_1}-v|_{K_2}$.
In case $e$ is a boundary edge, i.e., $e=\partial K\cap \partial\Omega$, we define $\ljump{v}\rjump|_e = v|_{K}$.
\end{lemma}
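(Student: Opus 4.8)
The plan is to split on the value of $p$. Since $\Omega$ is bounded, for $p\in[1,2]$ the bound follows from the case $p=2$ and H\"older's inequality, $\|v\|_{L^p(\Omega)}\le|\Omega|^{1/p-1/2}\|v\|_{L^2(\Omega)}$, so the substance of the lemma is the range $p\in[2,\infty)$. There a first reduction is convenient: replace $v|_K$ by its $L^2(K)$-projection $\Pi_Kv\in\mathbb{P}_1(K)$; elementwise Poincar\'e and a scaling argument give $\|v-\Pi_Kv\|_{L^p(K)}\le Ch_K^{2/p}\|\nabla v\|_{L^2(K)}$, the trace inequality (\ref{trace1}) controls the resulting new face-jumps, and summing --- using $\sum_K a_K^p\le(\sum_K a_K^2)^{p/2}$ for $p\ge2$ and $h_K\le\mathrm{diam}(\Omega)$ --- yields $\|v-\Pi_hv\|_{L^p(\Omega)}\le C\|v\|_{dg}$ and $\|\Pi_hv\|_{dg}\le C\|v\|_{dg}$. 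Hence it suffices to prove the inequality for a piecewise polynomial $w_h\in\mathbb{P}_1(\T_h)$. For such $w_h$ I would introduce a conforming companion $E_hw_h\in H_0^1(\Omega)$ via an Oswald / Karakashian--Pascal type averaging operator obeying the local bound
\[
\|w_h-E_hw_h\|_{L^2(K)}^2+h_K^2\|\nabla(w_h-E_hw_h)\|_{L^2(K)}^2\le C\sum_{e\cap\overline{K}\neq\emptyset}h_e\|\ljump{w_h}\rjump\|_{L^2(e)}^2.
\]
Summing gives $\|\nabla E_hw_h\|_{L^2(\Omega)}\le C\|w_h\|_{dg}$, while combining it with the inverse inequality (\ref{inv}) and elementary $\ell^q$-comparisons (with $q=p/2\ge1$) gives $\|w_h-E_hw_h\|_{L^p(\Omega)}\le Ch^{2/p}\|w_h\|_{dg}\le C\|w_h\|_{dg}$. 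A triangle inequality then reduces everything to the continuous Sobolev embedding $\|E_hw_h\|_{L^p(\Omega)}\le C_p\|\nabla E_hw_h\|_{L^2(\Omega)}$, valid for every finite $p$ on a bounded planar domain; this is essentially the route of \cite{b03,ls03}.

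For the case $p=2$ there is a shorter duality argument worth recording. Take $w\in L^2(\Omega)$ with $\|w\|_{L^2(\Omega)}=1$ and let $z\in H_0^1(\Omega)$ solve $-\Delta z=w$ in $\Omega$; by elliptic regularity on the polygon $\Omega$, $z\in H^{1+s}(\Omega)$ for some $s\in(1/2,1]$ with $\|z\|_{H^{1+s}(\Omega)}\le C$. Integrating by parts elementwise, and using that $\nabla z\cdot\bm{n}$ is single-valued across each interior edge (because $s>1/2$) together with the convention $\ljump{v}\rjump|_e=v|_K$ on $e\subset\partial\Omega$, one obtains
\[
(v,w)_\Omega=(\nabla v,\nabla z)_{\T_h}-\sum_{e\in\E_h}\langle\ljump{v}\rjump,\,\nabla z\cdot\bm{n}_e\rangle_e.
\]
The first term is at most $\|\nabla v\|_{L^2(\T_h)}\|\nabla z\|_{L^2(\Omega)}$. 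For the second, a weighted Cauchy--Schwarz over $\E_h$ produces $(\sum_{e\in\E_h}h_e^{-1}\|\ljump{v}\rjump\|_{L^2(e)}^2)^{1/2}$ times $(\sum_{e\in\E_h}h_e\|\nabla z\|_{L^2(e)}^2)^{1/2}$, and the latter factor is $\le C\|z\|_{H^{1+s}(\Omega)}\le C$ by the trace inequality (\ref{trace1}) (its fractional-order form when $s<1$), each edge being paired with an adjacent element and $h_e\simeq h_K$ by shape regularity. Together the two factors are $\le C\|v\|_{dg}$, so $(v,w)_\Omega\le C\|v\|_{dg}$, and taking the supremum over such $w$ gives $\|v\|_{L^2(\Omega)}\le C\|v\|_{dg}$.

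The routine ingredients are the elementwise polynomial approximation and the repeated use of the trace and inverse inequalities (\ref{trace1})--(\ref{inv}). I expect the main obstacle to be the construction and analysis of the conforming companion $E_h$ with the correct $h_e$-weighting of the face-jump terms, which on a general shape-regular polytopal partition is the genuinely technical step (carried out in \cite{b03,ls03}); a secondary subtlety is that the general-$p$ estimate must pass through the two-dimensional Sobolev embedding $H^1(\Omega)\hookrightarrow L^p(\Omega)$, whose constant degenerates as $p\to\infty$, which is exactly why the statement is confined to $p<\infty$.
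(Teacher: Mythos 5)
The paper does not actually prove this lemma --- it simply cites \cite{b03,ls03} --- and your argument (reduction to piecewise polynomials via elementwise projection, an Oswald/Karakashian--Pascal conforming companion $E_h$ controlled by the $h_e^{-1}$-weighted jumps, and the two-dimensional Sobolev embedding $H_0^1(\Omega)\hookrightarrow L^p(\Omega)$ for finite $p$, supplemented by the duality argument for $p=2$) is precisely the route taken in those references and is correct. The one genuinely technical ingredient, the construction and local jump estimate for $E_h$ on a general shape-regular polytopal partition, is exactly the content of the cited works, as you acknowledge.
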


From Lemma \ref{lem:P-dg}, we can easily establish the analogue of Poincar\'{e} type inequality for weak functions.
\begin{lemma}\label{lem:P-wg}
For any weak function $v=\{v_0, v_b\}\in V_h^0+H^1(\T_h)$, there exists a constant $C_P>0$ independent of mesh size $h$ and $v$ such that for any $p\in [1,\infty)$
\begin{align*}
\|v_0\|_{L^p(\Omega)}\leq C_P \|v\|_{1,h}.
\end{align*}
\end{lemma}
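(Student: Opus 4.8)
We must show that for any weak function $v=\{v_0,v_b\}\in V_h^0+H^1(\T_h)$ and any $p\in[1,\infty)$, the bound $\|v_0\|_{L^p(\Omega)}\le C_P\|v\|_{1,h}$ holds, using the already-established Poincaré inequality of Lemma \ref{lem:P-dg} for the broken Sobolev space with the $\|\cdot\|_{dg}$ norm. The natural strategy is to apply Lemma \ref{lem:P-dg} to $v_0\in H^1(\T_h)$ and then bound $\|v_0\|_{dg}$ by $\|v\|_{1,h}$. So the whole proof reduces to the estimate
$$\|v_0\|_{dg}\le C\,\|v\|_{1,h},$$
i.e., to controlling the jump terms $\sum_{e\in\E_h}h_e^{-1}\|\ljump{v_0}\rjump\|_{L^2(e)}^2$ by the quantities $\sum_{K}h_K^{-1}\|v_0-v_b\|_{L^2(\partial K)}^2$ appearing in $\|v\|_{1,h}$ (the gradient term $\|\nabla v_0\|_{L^2(\T_h)}^2$ is literally common to both norms).

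**The plan for the jump estimate.** Let me sketch how I would carry out the reduction. Fix an interior edge $e=\partial K_1\cap\partial K_2$. On $e$ the jump of $v_0$ can be split through the single-valued boundary function $v_b$:
$$\ljump{v_0}\rjump|_e = v_0|_{K_1}-v_0|_{K_2} = (v_0|_{K_1}-v_b) - (v_0|_{K_2}-v_b),$$
so by the triangle inequality $\|\ljump{v_0}\rjump\|_{L^2(e)}\le \|(v_0-v_b)|_{K_1}\|_{L^2(e)}+\|(v_0-v_b)|_{K_2}\|_{L^2(e)}$. For a boundary edge $e=\partial K\cap\partial\Omega$ we have $\ljump{v_0}\rjump|_e=v_0|_K$; since $v\in V_h^0+H^1(\T_h)$ and the $H^1$ part has zero trace when we arrange the decomposition appropriately — more precisely, for the $V_h^0$ component $v_b|_e=0$ so $v_0|_K=v_0|_K-v_b$ there, and for an $H^1(\Omega)$ function $v_0|_{\partial\Omega}$ is controlled the same way — one again gets $\|\ljump{v_0}\rjump\|_{L^2(e)}\le\|(v_0-v_b)|_K\|_{L^2(e)}$. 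Squaring, multiplying by $h_e^{-1}$, and using the shape-regularity comparability $h_e\simeq h_{K}$ for $e\subset\partial K$, we sum over all edges. Each term $h_e^{-1}\|(v_0-v_b)|_{K}\|_{L^2(e)}^2$ is bounded by $Ch_K^{-1}\|v_0-v_b\|_{L^2(e)}^2$, and since every element has a bounded number of edges, summing over $e$ gives $\sum_{e}h_e^{-1}\|\ljump{v_0}\rjump\|_{L^2(e)}^2\le C\sum_{K}h_K^{-1}\|v_0-v_b\|_{L^2(\partial K)}^2$. Combined with the identical gradient terms this yields $\|v_0\|_{dg}\le C\|v\|_{1,h}$, and then Lemma \ref{lem:P-dg} applied to $v_0$ finishes the argument, with $C_P$ absorbing the constant from Lemma \ref{lem:P-dg} and the shape-regularity constant.

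**Where the subtlety lies.** The routine parts (triangle inequality on jumps, shape-regularity, finite overlap of edges) are standard. The one point demanding care is the treatment of functions in $V_h^0+H^1(\T_h)$ rather than in $V_h^0$ alone, together with the boundary edges: one must be careful that for an element $v=v^h+w$ with $v^h\in V_h^0$ and $w\in H^1(\T_h)$, the quantity $v_0-v_b$ is well defined (take $v_b$ from the $V_h^0$ part and $v_0 = v_0^h + w$), and that the inequality degrades gracefully at $\partial\Omega$ because $v_b=0$ there on the discrete component. This is really just bookkeeping — the decomposition $V_h^0+H^1(\T_h)$ is there so that the discrete error $u_h-Q_hu$ lies in the space — but it is the only spot where one cannot quote the estimate verbatim from a purely discrete setting. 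I expect no genuine obstacle; the lemma is essentially a transcription of Lemma \ref{lem:P-dg} into the weak-function framework via the triangle inequality and shape regularity.
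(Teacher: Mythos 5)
Your proposal follows essentially the same route as the paper: split the jump $\ljump v_0\rjump$ across each interior edge through the single-valued $v_b$, apply the triangle inequality and shape regularity to get $\|v_0\|_{dg}\leq C\|v\|_{1,h}$, and then invoke Lemma \ref{lem:P-dg}. Your extra care with boundary edges and with the decomposition of $V_h^0+H^1(\T_h)$ is sound bookkeeping that the paper's own proof leaves implicit.
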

\begin{proof}
Let $v=\{v_0, v_b\}$ be any weak function in the space $V_h^0+H^1(\T_h)$.
For any edge $e=\partial K_1\cap \partial K_2$, since
\begin{align*}
\ljump v_0 \rjump|_e = (v_0|_{K_1}-v_b)+(v_b-v_0|_{K_2}),
\end{align*}
then by the triangle inequality we deduce that
\begin{align*}
\|\ljump v_0 \rjump\|_{L^2(e)} \leq  \|v_0-v_b\|_{L^2(\partial K_1\cap e)}+\|v_b-v_0\|_{L^2(\partial K_2\cap e)}.
\end{align*}
Thus, under the assumption of shape regularity of the partition $\T_h$, we have
\begin{align*}
h_e^{-1}\|\ljump v_0 \rjump\|_{L^2(e)}^2 \leq  C(h_{K_1}^{-1}\|v_0-v_b\|_{L^2(\partial K_1\cap e)}^2+h_{K_2}^{-1}\|v_b-v_0\|_{L^2(\partial K_2\cap e)}^2).
\end{align*}
From the definitions of $\|\cdot\|_{dg}$ and $\|\cdot\|_{1,h}$, it is easy to see that
\begin{align*}
\|v_0\|_{dg}\leq C\|v\|_{1,h},
\end{align*}
which together with Lemma \ref{lem:P-dg} completes the proof.
\end{proof}

Let us introduce the projection operators $\mathcal{Q}_h$ and $\Pi_h$.
For each element $K\in\T_h$, we denote by $\mathcal{Q}_0$ and $\Pi_h$ the $L^2$-orthogonal projections $\mathcal{Q}_0: L^2(K)\rightarrow \mathbb{P}_k(K)$ and $\Pi_h: [L^2(K)]^2\rightarrow [\mathbb{P}_{k-1}(K)]^2$, respectively, that is,
\begin{align*}
(\mathcal{Q}_0 v -v, w)_K &= 0,\quad \forall w\in \mathbb{P}_k(K),\\
(\Pi_h \bm{\sigma} -\bm{\sigma}, \bm{\tau} )_K &= 0, \quad \forall \bm{\tau}\in [\mathbb{P}_{k-1}(K)]^2(K).
\end{align*}
For each edge $e\in\E_h$, the $L^2$-orthogonal projection
$\mathcal{Q}_b: L^{2}(e)\rightarrow \mathbb{P}_k(e)$ is defined as follows:
\begin{equation}\label{Qb}
\langle\mathcal{Q}_bv -v, w\rangle_e = 0,\quad \forall w\in \mathbb{P}_k(e).
\end{equation}
Finally, for a smooth function $v\in H^1(\T_h)$, we introduce the projection $\mathcal{Q}_h:H^1(\T_h)\rightarrow V_h$ such that
\begin{align*}
(\mathcal{Q}_h v)|_K: = \{\mathcal{Q}_0 (v|_K), \mathcal{Q}_b(v|_{\partial K})\}.
\end{align*}

In \cite{wg-poly}, it was proved that the projections $\mathcal{Q}_h$ and $\Pi_h$ has the following commutative property.
\begin{lemma}\citep[Lemma 5.1]{wg-poly}\label{lem:com}
For any $v\in H^1(K)$ and $K\in\T_h$, there holds
\begin{align*}
\nabla_w(\mathcal{Q}_h v) = \Pi_h(\nabla v).
\end{align*}
\end{lemma}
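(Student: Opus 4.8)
The plan is to argue directly from the defining relation of the weak gradient together with the defining properties of the $L^2$-projections $\mathcal{Q}_0$, $\mathcal{Q}_b$, and $\Pi_h$, all applied on a single element $K\in\T_h$. Since $\nabla_w(\mathcal{Q}_h v)|_K$ and $\Pi_h(\nabla v)|_K$ both lie in $[\mathbb{P}_{k-1}(K)]^2$, it suffices to show they pair identically against every test vector $\bm{\phi}\in[\mathbb{P}_{k-1}(K)]^2$. First I would write out, from Definition~2.1 (equation~\eqref{wl}) applied to the weak function $\mathcal{Q}_h v = \{\mathcal{Q}_0 v, \mathcal{Q}_b v\}$,
\begin{equation*}
(\nabla_w(\mathcal{Q}_h v), \bm{\phi})_K = -(\mathcal{Q}_0 v, \nabla\cdot\bm{\phi})_K + \langle \mathcal{Q}_b v, \bm{\phi}\cdot\bm{n}\rangle_{\partial K}.
\end{equation*}
Because $\nabla\cdot\bm{\phi}\in\mathbb{P}_{k-1}(K)\subset\mathbb{P}_k(K)$, the definition of $\mathcal{Q}_0$ lets me replace $\mathcal{Q}_0 v$ by $v$ in the volume term; similarly, since $\bm{\phi}\cdot\bm{n}|_e\in\mathbb{P}_{k-1}(e)\subset\mathbb{P}_k(e)$ for each edge $e\subset\partial K$, the definition~\eqref{Qb} of $\mathcal{Q}_b$ lets me replace $\mathcal{Q}_b v$ by $v$ in the boundary term. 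This gives
\begin{equation*}
(\nabla_w(\mathcal{Q}_h v), \bm{\phi})_K = -(v, \nabla\cdot\bm{\phi})_K + \langle v, \bm{\phi}\cdot\bm{n}\rangle_{\partial K}.
\end{equation*}

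Next I would recognize the right-hand side as an integration-by-parts identity: for $v\in H^1(K)$ and $\bm{\phi}\in[\mathbb{P}_{k-1}(K)]^2\subset [H^1(K)]^2$, the divergence theorem gives $-(v,\nabla\cdot\bm{\phi})_K + \langle v, \bm{\phi}\cdot\bm{n}\rangle_{\partial K} = (\nabla v, \bm{\phi})_K$. Hence $(\nabla_w(\mathcal{Q}_h v),\bm{\phi})_K = (\nabla v, \bm{\phi})_K$. Finally, since $\bm{\phi}\in[\mathbb{P}_{k-1}(K)]^2$, the definition of the projection $\Pi_h$ yields $(\nabla v, \bm{\phi})_K = (\Pi_h(\nabla v), \bm{\phi})_K$. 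Combining the two displays, $(\nabla_w(\mathcal{Q}_h v) - \Pi_h(\nabla v), \bm{\phi})_K = 0$ for all $\bm{\phi}\in[\mathbb{P}_{k-1}(K)]^2$, and taking $\bm{\phi} = \nabla_w(\mathcal{Q}_h v) - \Pi_h(\nabla v)$ forces the difference to vanish on $K$; since $K$ was arbitrary, the identity holds piecewise on $\T_h$.

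There is no real obstacle here — the result is essentially a bookkeeping exercise that chains together four definitions. The only point that requires the slightest care is making sure the polynomial degrees line up so that each projection can be "undone" against the test function: $\nabla\cdot\bm{\phi}$ has degree at most $k-1\le k$ so it is seen exactly by $\mathcal{Q}_0$, and $\bm{\phi}\cdot\bm{n}$ restricted to an edge has degree at most $k-1\le k$ so it is seen exactly by $\mathcal{Q}_b$. Since this is already established as Lemma~5.1 of \cite{wg-poly}, I would simply cite that reference and, if desired, reproduce the three-line computation above for completeness.
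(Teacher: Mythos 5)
Your proof is correct and is exactly the standard argument: the paper itself gives no proof, only the citation to Lemma 5.1 of \cite{wg-poly}, and your chain of the weak-gradient definition, the orthogonality of $\mathcal{Q}_0$ and $\mathcal{Q}_b$ against $\nabla\cdot\bm{\phi}$ and $\bm{\phi}\cdot\bm{n}$, integration by parts, and the definition of $\Pi_h$ is precisely the computation carried out in that reference. The only nitpick is that $\nabla\cdot\bm{\phi}$ actually has degree at most $k-2$ rather than $k-1$, but this is harmless since either bound keeps it inside $\mathbb{P}_k(K)$.
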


It is well known that (cf.\cite{ciarlet}) the projections $\mathcal{Q}_0$ and $\Pi_h$ have the standard approximation properties:
\begin{lemma}
For any $K\in\T_h$ and any $w\in W_p^{k+1}(K)$ and $\bm{\sigma}\in [W_p^{k}(K)]^2$ with $1\leq p\leq q\leq \infty$, there exists positive constant $C$ independent of $h_K$ such that,
\begin{align}\label{Q0-app}
|w-\mathcal{Q}_0 w|_{W_p^{r}(K)} \leq Ch_K^{k+1-r}|w|_{W_p^{k+1}(K)},\quad 0\leq r\leq k+1
\end{align}
and
\begin{align}\label{Pih-app}
|\bm{\sigma}-\Pi_h\bm{\sigma}|_{W_q^r(K)}\leq Ch_K^{k-r-2/p+2/q}|\bm{\sigma}|_{W_p^{k}(K)},\quad 0\leq r\leq k.
\end{align}
\end{lemma}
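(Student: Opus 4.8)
The estimates \eqref{Q0-app} and \eqref{Pih-app} are the classical polynomial approximation bounds, and the plan is to recall the standard Bramble--Hilbert (Deny--Lions) plus scaling argument rather than to reprove it from scratch. The two facts I would use are: (i) $\mathcal{Q}_0$ and $\Pi_h$ are $L^2$-bounded, polynomial-preserving projections, so $\mathcal{Q}_0 q=q$ for all $q\in\mathbb{P}_k(K)$ and $\Pi_h\bm{\pi}=\bm{\pi}$ for all $\bm{\pi}\in[\mathbb{P}_{k-1}(K)]^2$; and (ii) under the shape-regularity hypothesis on $\T_h$, each $K\in\T_h$ is the affine image of a fixed reference configuration $\hat K$ (or, for a general polytope, is covered by a uniformly bounded number of shape-regular simplices of diameter comparable to $h_K$) through a map $F_K$ whose Jacobian scales like $h_K^2$ and whose derivative scales like $h_K$. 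Writing $\hat v=v\circ F_K$, this gives the scaling relations $|v|_{W_p^m(K)}\simeq h_K^{m-2/p}\,|\hat v|_{W_p^m(\hat K)}$ for every $m\ge0$ and $1\le p\le\infty$, with constants depending only on the shape-regularity parameter; moreover the $L^2$-projections commute with $F_K$, i.e.\ $\widehat{\mathcal{Q}_0 w}=\hat{\mathcal{Q}}_0\hat w$ and $\widehat{\Pi_h\bm{\sigma}}=\hat{\Pi}\hat{\bm{\sigma}}$.

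For \eqref{Q0-app} I would fix $0\le r\le k+1$ and observe that $I-\hat{\mathcal{Q}}_0$ is a bounded linear operator from $W_p^{k+1}(\hat K)$ into $W_p^{r}(\hat K)$ that vanishes on $\mathbb{P}_k(\hat K)$. The Bramble--Hilbert lemma, combined with the Deny--Lions estimate $\inf_{q\in\mathbb{P}_k}\|\hat w-q\|_{W_p^{k+1}(\hat K)}\le C|\hat w|_{W_p^{k+1}(\hat K)}$, then yields $|\hat w-\hat{\mathcal{Q}}_0\hat w|_{W_p^{r}(\hat K)}\le C|\hat w|_{W_p^{k+1}(\hat K)}$. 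Pulling this back to $K$ via the scaling relations, the left-hand side carries a factor $h_K^{-r+2/p}$ and $|\hat w|_{W_p^{k+1}(\hat K)}$ carries a factor $h_K^{k+1-2/p}$; the $2/p$ contributions cancel and leave precisely $Ch_K^{k+1-r}|w|_{W_p^{k+1}(K)}$.

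The proof of \eqref{Pih-app} follows the same template, the only extra point being the mismatch $p\le q$. On the reference element one has $I-\hat{\Pi}$ bounded from $W_p^{k}(\hat K)$ into $W_q^{r}(\hat K)$ --- here one invokes the Sobolev embedding $W_p^{k}(\hat K)\hookrightarrow W_q^{r}(\hat K)$, which is exactly what makes the right-hand side of \eqref{Pih-app} meaningful --- and annihilating $[\mathbb{P}_{k-1}(\hat K)]^2$, so Bramble--Hilbert gives $|\hat{\bm{\sigma}}-\hat{\Pi}\hat{\bm{\sigma}}|_{W_q^{r}(\hat K)}\le C|\hat{\bm{\sigma}}|_{W_p^{k}(\hat K)}$. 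Scaling back, the left-hand side now picks up $h_K^{-r+2/q}$ while $|\hat{\bm{\sigma}}|_{W_p^{k}(\hat K)}$ picks up $h_K^{k-2/p}$, and the product is exactly $Ch_K^{k-r-2/p+2/q}$. An equivalent route staying on $K$ is to write $\bm{\sigma}-\Pi_h\bm{\sigma}=(I-\Pi_h)(\bm{\sigma}-\bm{\pi})$ with $\bm{\pi}\in[\mathbb{P}_{k-1}(K)]^2$ a near-best $L^p$-polynomial approximation and, for the polynomial remainder $\Pi_h(\bm{\sigma}-\bm{\pi})$, to use $L^p$-stability of $\Pi_h$ together with the inverse inequality \eqref{inv} to move from $L^p$ up to $W_q^{r}$ at the cost of exactly $h_K^{-r+2/q-2/p}$, which exposes concretely where the mixed exponent originates.

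The only step that is not pure bookkeeping is the scaling machinery itself on a polytopal element: one must either use the shape-regularity decomposition of $K$ into a controlled number of comparable-size simplices and assemble the local estimates, or apply Bramble--Hilbert directly on a domain star-shaped with respect to a ball whose chunkiness parameter stays bounded uniformly in $h$. Once that is set up, the exponents of $h_K$ in both \eqref{Q0-app} and \eqref{Pih-app} fall out of the scaling identities as above, which is why the result is quoted as classical (cf.\ \cite{ciarlet}).
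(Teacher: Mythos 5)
Your proposal is correct and is essentially the argument the paper implicitly relies on: the paper offers no proof of this lemma, quoting it as classical with a citation to Ciarlet, and your Bramble--Hilbert/Deny--Lions plus affine-scaling derivation (with the polytopal case handled by shape-regular subdivision or a star-shapedness argument) is exactly the standard proof found there; your exponent bookkeeping for both \eqref{Q0-app} and \eqref{Pih-app} checks out. No discrepancy to report.
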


In the analysis below, we shall use the following Taylor series expansions.  For any $s, t\in \mathbb{R}$, let $\eta(\sigma)=a(t+\sigma(s-t))$, it is easy to check that
\[\eta(1)=\eta(0)+\int_0^1\eta'(\sigma)\mathrm{d}\sigma,\]
which implies
\begin{align}\label{eq:ts-1st}
a(s)=a(t)+\widetilde{a}_u(s,t)(s-t),
\end{align}
where $\widetilde{a}_u(s,t)=\int_0^1a_u(t+\sigma(s-t))\mathrm{d}\sigma$.
From the integral equality
\[\eta(1)=\eta(0)+\eta'(0)+\int_0^1(1-\sigma)\eta''(\sigma)\mathrm{d}\sigma,\]
it follows that
\begin{align}\label{eq:ts-2nd}
a(s)=a(t)+a_u(t)(s-t)+\widetilde{a}_{uu}(s,t)(s-t)^2,
\end{align}
where $\widetilde{a}_{uu}(s,t)=\int_0^1(1-\sigma)a_{uu}(t+\sigma(s-t))\mathrm{d}\sigma$.

Denote by
\[ M_a = \max\{\|a\|_{L^{\infty}(\Omega\times\mathbb{R})}, \|a_u\|_{L^{\infty}(\Omega\times\mathbb{R})},  \|a_{uu}\|_{L^{\infty}(\Omega\times\mathbb{R})}\}.\]

For the sake of convenience, denote by $Lu:= -\nabla\cdot(a(u)\nabla u)$. For any $w\in W^{1,p}$, the linearized operator $L$ at $w$ (namely, the Fr\'{e}chet derivative of $L$ at $w$) is then given by
\begin{align*}
L'[w]\phi := -\nabla\cdot (a(w)\nabla\phi+a_u(w)\nabla w\phi).
\end{align*}
Introducing the bilinear form (induced by $L'[w]$)
\begin{align}
D_h(w;\phi_h, v_h) = (a(w)\nabla_w \phi_h, \nabla_w v_h)_{\T_h}
+(a_u(w)\nabla w \phi_0, \nabla_wv_h)_{\T_h}+s_h(\phi_h, v_h)
\end{align}
for any $\phi_h=\{\phi_0, \phi_b\}, v_h=\{v_0, v_b\}\in V_h$. It is easy to see that
\[D_h(w;\phi_h, v_h)= A_h(w;\phi_h, v_h)+(a_u(w)\nabla w \phi_0, \nabla_wv_h)_{\T_h}.\]

First of all, let us introduce the following analogy of G{\aa}rding's inequality.
\begin{lemma}[G{\aa}rding's inequality]\label{lem:garding}
For a given $\phi\in W^1_{\infty}(\Omega)$. Then there is a positive constant $\beta$ satisfying
\begin{align}\label{beta-con}
 \gamma+\frac{M_a^2|\phi|_{W^1_{\infty}(\Omega)}^2}{2\alpha_0}
 \leq \beta < \infty,
\end{align}
such that
\begin{equation}\label{eq:garding}
D_h(\phi;v_h,v_h)+\beta\|v_0\|^2 \geq \gamma (\3bar v_h\3bar^2+\|v_0\|^2), \quad \forall v_h=\{v_0,v_b\}\in V_h^0,
\end{equation}
where $\gamma = \min\{\frac{\alpha_0}{2},1\}$.
\end{lemma}
\begin{proof}
Using the boundness of $a(u)$, we have
\begin{align*}
&D_h(\phi;v_h,v_h)+\beta\|v_0\|_{L^2(\T_h)}^2\\
&=A_h(\phi;v_h, v_h)+(a_u(\phi)\nabla \phi v_0, \nabla_wv_h)_{\T_h}+\beta\|v_0\|_{L^2(\T_h)}^2\\
&\geq \alpha_0\|\nabla_w v_h\|_{L^2(\T_h)}^2+s_h(v_h,v_h) + (a_u(\phi)\nabla \phi v_0, \nabla_wv_h)_{\T_h}
+\beta\|v_0\|_{L^2(\T_h)}^2.
\end{align*}
By H\"{o}lder's inequality,
\begin{align*}
|(a_u(\phi)\nabla \phi v_0, \nabla_wv_h)_{\T_h}|
\leq M_a| \phi|_{W^1_{\infty}(\Omega)}\|\nabla_w v_h\|_{L^2(\T_h)}\|v_0\|_{L^2(\T_h)}.
\end{align*}
Therefore,
\begin{align*}
&D_h(\phi;v_h,v_h)+\beta\|v_0\|_{L^2(\T_h)}^2\\
&\geq \alpha_0\|\nabla_w v_h\|_{L^2(\T_h)}^2+s_h(v_h,v_h)+\beta\|v_0\|_{L^2(\T_h)}^2\\
&\quad -M_a| \phi|_{W^1_{\infty}(\Omega)}\|\nabla_w v\|_{L^2(\T_h)}\|v_0\|_{L^2(\T_h)}.
\end{align*}
Let $\gamma = \min\{\frac{\alpha_0}{2},1\}$. Provided (\ref{beta-con}),
from the Young's inequality, we conclude that
\begin{align*}
&D_h(\phi;v_h,v_h)+\beta\|v_0\|_{L^2(\T_h)}^2\\
&\geq \frac{\alpha_0}{2}\|\nabla_w v_h\|_{L^2(\T_h)}^2+s_h(v_h,v_h)
+(\beta-\frac{M_a^2|\phi|_{W^1_{\infty}(\Omega)}^2}{2\alpha_0})\|v_0\|_{L^2(\T_h)}^2\\
&\geq\gamma(\3bar v_h\3bar^2+\|v_0\|_{L^2(\T_h)}^2).
\end{align*}
The proof is completed.
\end{proof}

\subsection{Error equation}
Now for the exact solution $u$ of (\ref{pde})-(\ref{pde-bc}), we define the error between the WG solution $u_h=\{u_0, u_b\}$ and the projection $\mathcal{Q}_hu=\{\mathcal{Q}_0u, \mathcal{Q}_bu\}$ of $u$ as
\[
 e_h=\mathcal{Q}_hu-u_h:=\{e_0, e_b\},
\]
with
\[
e_0=\mathcal{Q}_0u-u_0, \quad e_b=\mathcal{Q}_bu-u_b.
\]
The aim of this subsection is to obtain an error equation for $e_h$ by the use of bilinear form $D_h(u;\cdot,\cdot)$.

\begin{lemma}[Error equation]
Let $u$ and $u_h$ be the solutions of the problem (\ref{pde})-(\ref{pde-bc}) and the WG scheme (\ref{wg}), respectively.
For any $v=\{v_0, v_b\}\in V_h^0$, we have
\begin{align}\label{eq:dh}
D_h(u; \mathcal{Q}_hu-u_h,v)=E_h(u, v)+R_h(u; u_h, v),
\end{align}
where
\begin{align}\label{eq:Eh}
E_h(u, v)&=\ell(u, v) + s_h(\mathcal{Q}_hu, v),\\[3pt]
\label{eq:Rh}
R_h(u; u_h, v)&= ((u-u_0)\widetilde{a}_u(u_0)(\nabla u-\nabla_wu_h), \nabla_w v)_{\T_h}\nonumber\\
&\qquad +((u-u_0)^2\widetilde{a}_{uu}(u_0){\nabla u}, \nabla_w v)_{\T_h},
\end{align}
with $\ell(u,v):=\sum\nolimits_{i=1}^3\ell_i(u,v)$, and
\begin{subequations}
  \begin{align}
  \label{el1}
  \ell_1(u,v)&:=(a(u)(\Pi_h(\nabla u)-\nabla u), \nabla_w v)_{\T_h},\\
  \label{el2}
  \ell_2(u,v)&:= \langle (a(u)\nabla u-\Pi_h(a(u)\nabla u))\cdot\bm{n}, v_0-v_b
  \rangle_{\partial\T_h},\\
  \label{el3}
  \ell_3(u,v)&:= ((\mathcal{Q}_0u-u)a_u(u)\nabla{u}, \nabla_w v)_{\T_h}.
  \end{align}
\end{subequations}
\end{lemma}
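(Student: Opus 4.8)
The plan is to derive the error equation by starting from the exact equation tested against a weak function, manipulating both sides into a form that reveals $D_h(u;\mathcal{Q}_hu-u_h,v)$ on the left, and collecting all remaining terms into the consistency error $E_h$ and the nonlinear remainder $R_h$. First I would establish a "weak Galerkin consistency identity" for the exact solution: multiply $-\nabla\cdot(a(u)\nabla u)=f$ by $v_0$, integrate over each $K$, integrate by parts, and use the definition \eqref{wl} of the weak gradient together with the commutativity $\nabla_w(\mathcal{Q}_hu)=\Pi_h(\nabla u)$ from Lemma \ref{lem:com}. This is the standard WG bookkeeping: the integration by parts produces volume terms $(a(u)\nabla u,\nabla v_0)_K$ and boundary terms $\langle a(u)\nabla u\cdot\bm n, v_0-v_b\rangle_{\partial K}$ (the $v_b$ insertion is legitimate since $v_b=0$ on $\partial\Omega$ and $a(u)\nabla u\cdot\bm n$ is single-valued across interior edges), and then one rewrites $(a(u)\nabla u,\nabla v_0)_K$ using the $L^2$-projection $\Pi_h$ and the weak-gradient identity to get $(a(u)\Pi_h(\nabla u),\nabla_w v)_K$ plus correction terms that become exactly $\ell_1$ and $\ell_2$. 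Adding $s_h(\mathcal{Q}_hu,v)$ to both sides accounts for the stabilizer, giving $A_h(u;\mathcal{Q}_hu,v)=(f,v_0)+\ell_1(u,v)+\ell_2(u,v)+s_h(\mathcal{Q}_hu,v)$, where inside $A_h$ the coefficient is evaluated at $\mathcal{Q}_0u$ and $\nabla_w(\mathcal{Q}_hu)=\Pi_h(\nabla u)$.

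Next I would subtract the WG scheme $A_h(u_h;u_h,v)=(f,v_0)$ from this identity. The right side immediately yields $\ell_1+\ell_2+s_h(\mathcal{Q}_hu,v)$; the left side is $A_h(u;\mathcal{Q}_hu,v)-A_h(u_h;u_h,v)=(a(\mathcal{Q}_0u)\Pi_h\nabla u-a(u_0)\nabla_wu_h,\nabla_w v)_{\T_h}+s_h(\mathcal{Q}_hu-u_h,v)$. The crux is to rewrite this difference as $D_h(u;\mathcal{Q}_hu-u_h,v)$ minus the terms that should be moved to the right. I would write $a(\mathcal{Q}_0u)\Pi_h\nabla u - a(u_0)\nabla_w u_h = a(u_0)(\nabla_w(\mathcal{Q}_hu-u_h)) + (a(\mathcal{Q}_0u)-a(u_0))\Pi_h\nabla u$, so the first piece together with $s_h(e_h,v)$ almost forms $A_h(u;e_h,v)$ except the coefficient is $a(u_0)$ rather than $a(u)$; I prefer to regroup so the diffusion coefficient is $a(u)$ (as in $D_h(u;\cdot,\cdot)$) and absorb $(a(u_0)-a(u))\nabla_w e_h$ into $R_h$. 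Then $(a(\mathcal{Q}_0u)-a(u_0))\Pi_h\nabla u$ is handled via the first-order Taylor expansion \eqref{eq:ts-1st}: $a(\mathcal{Q}_0u)-a(u_0)=\widetilde{a}_u(\mathcal{Q}_0u,u_0)(\mathcal{Q}_0u-u_0)=\widetilde{a}_u e_0$, and writing $\mathcal{Q}_0u - u_0 = (u - u_0) - (u-\mathcal{Q}_0u)$ splits this into a term with factor $(u-u_0)$ (heading to $R_h$ after replacing $\Pi_h\nabla u$ by $\nabla u$ up to another $\ell$-type term) and a term with factor $(\mathcal{Q}_0u-u)$, which — after also replacing $\widetilde{a}_u$ near $u$ — produces $\ell_3$. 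The $D_h$ form also carries the extra linear term $(a_u(u)\nabla u\,e_0,\nabla_w v)_{\T_h}$ relative to $A_h(u;e_h,v)$; this is precisely what is needed to match the $a_u$-contribution coming from the coefficient difference, so I would add and subtract it carefully and check that the leftover lands in $R_h$ via the second-order expansion \eqref{eq:ts-2nd}.

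The delicate accounting is therefore the identification of $R_h$: after the dust settles, the quadratic-in-$(u-u_0)$ remainder must reduce to exactly the two terms in \eqref{eq:Rh}, namely $((u-u_0)\widetilde{a}_u(u_0)(\nabla u-\nabla_w u_h),\nabla_w v)_{\T_h}$ and $((u-u_0)^2\widetilde{a}_{uu}(u_0),\nabla_w v)_{\T_h}$. I would obtain the first by combining $(a(u_0)-a(u))\nabla_w e_h = -\widetilde{a}_u(u_0,u)(u-u_0)\nabla_w e_h$ with the $(u-u_0)$-part of the $\Pi_h\nabla u$ term and rewriting $\nabla_w e_h = \nabla_w\mathcal{Q}_hu - \nabla_w u_h = \Pi_h\nabla u - \nabla_w u_h$, then trading $\Pi_h\nabla u$ for $\nabla u$ so that the net is $(\nabla u - \nabla_w u_h)$ (the discrepancy $\Pi_h\nabla u-\nabla u$ contracted against $(u-u_0)\nabla_w v$ should cancel between the two sources or be reabsorbed — this is the bookkeeping I expect to be fussiest). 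The second term in $R_h$ arises from the second-order Taylor remainder of $a(u)-a(u_0)$ when the linear $a_u(u_0)(u-u_0)$ piece has already been peeled off to form the $D_h$ structure, leaving $\widetilde{a}_{uu}(u_0,u)(u-u_0)^2$ contracted against $\nabla_w v$. The main obstacle is purely organizational rather than analytical: making all the "add and subtract $a_u(u)\nabla u\,e_0$", "replace $\Pi_h\nabla u$ by $\nabla u$", and "replace coefficient argument $\mathcal{Q}_0u$ or $u_0$ by $u$ or vice versa" steps in a consistent order so that every stray term is accounted for in exactly one of $\ell_1,\ell_2,\ell_3$ or the two $R_h$ terms, with no double counting and no sign errors — I would do this by writing the full difference as a single chain of equalities and labelling each term as it is created.
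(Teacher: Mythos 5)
Your overall strategy coincides with the paper's: derive a WG consistency identity for the exact solution, subtract the scheme, add a linearization term to upgrade $A_h$ to $D_h$, and Taylor-expand the coefficient differences. However, there is a concrete error in the middle that prevents the bookkeeping from closing onto the stated \emph{exact} identity. You assert that in the consistency identity $A_h(u;\mathcal{Q}_hu,v)=(f,v_0)+\ell_1(u,v)+\ell_2(u,v)+s_h(\mathcal{Q}_hu,v)$ ``the coefficient is evaluated at $\mathcal{Q}_0u$,'' and you then carry $a(\mathcal{Q}_0u)\Pi_h(\nabla u)$ into the subtraction step. With the stated $\ell_1=(a(u)(\Pi_h(\nabla u)-\nabla u),\nabla_w v)_{\T_h}$ this identity holds only if the coefficient is $a(u)$ (the exact solution, viewed as a weak function, has interior component $u|_K$, so $A_h(u;\cdot,\cdot)$ and $D_h(u;\cdot,\cdot)$ carry $a(u)$, not $a(\mathcal{Q}_0u)$); with $a(\mathcal{Q}_0u)$ you are off by $((a(\mathcal{Q}_0u)-a(u))\Pi_h(\nabla u),\nabla_w v)_{\T_h}$, a term with no slot in the stated decomposition. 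Consequently your coefficient-mismatch term becomes $(a(\mathcal{Q}_0u)-a(u_0))\Pi_h(\nabla u)$ rather than the correct $(a(u_0)-a(u))\nabla_w u_h$, and its Taylor expansion produces factors $\mathcal{Q}_0u-u_0$ and $\Pi_h(\nabla u)$, whereas $\ell_3$ and $R_h$ involve exactly $a_u(u)\nabla u$, $u-u_0$, and $\nabla u-\nabla_w u_h$. Your planned repairs --- ``replace $\widetilde a_u$ near $u$'' and ``trade $\Pi_h\nabla u$ for $\nabla u$'' --- each generate further remainders of the form $(u-u_0)(\cdots)(\Pi_h(\nabla u)-\nabla u)$ and $(\mathcal{Q}_0u-u)(\widetilde a_u-a_u(u))(\cdots)$, which you hope ``cancel between the two sources or be reabsorbed.'' In an exact identity there is no reabsorbing: every term must land in exactly one of $\ell_1,\ell_2,\ell_3$ or the two $R_h$ terms, and these do not.

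The fix is to keep the coefficient at $a(u)$ throughout, so that after subtracting the scheme the only coefficient mismatch is $((a(u_0)-a(u))\nabla_w u_h,\nabla_w v)_{\T_h}$, with no $\Pi_h$ attached to it. Then add $((\mathcal{Q}_0u-u_0)a_u(u)\nabla u,\nabla_w v)_{\T_h}$ to both sides so the left side becomes $D_h(u;\mathcal{Q}_hu-u_h,v)$, and split the added term via $\mathcal{Q}_0u-u_0=(\mathcal{Q}_0u-u)+(u-u_0)$: the first piece is exactly $\ell_3$, and the second combines with the coefficient mismatch into
\[
H=((a(u_0)-a(u))(\nabla_w u_h-\nabla u),\nabla_w v)_{\T_h}+((a(u_0)-a(u)-(u_0-u)a_u(u))\nabla u,\nabla_w v)_{\T_h},
\]
which the first- and second-order Taylor expansions (\ref{eq:ts-1st})--(\ref{eq:ts-2nd}) convert exactly into the two terms of $R_h$ with no residue. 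Note also that the first $R_h$ term is built from $\nabla u-\nabla_w u_h$, not from $\nabla_w(\mathcal{Q}_hu-u_h)$ as in your ``absorb $(a(u_0)-a(u))\nabla_w e_h$ into $R_h$'' step; that substitution would again leave a stray $(a(u_0)-a(u))(\Pi_h(\nabla u)-\nabla u)$ term.
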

\begin{proof}
For notational convenience, we denote $a(u)\nabla u$ by $\sigma$.
For any $v=\{v_0,v_b\}\in V_h^0$, testing (\ref{pde}) by  $v_0$  and using the fact that
\[\sum_{K\in\T_h}\langle \sigma\cdot\bm{n}, v_b\rangle_{\partial K}=0\]
and integration by parts,  we arrive at
\begin{align}\label{eq:m1}
(f,v_0)=-(\nabla\cdot\sigma, v_0)_{\T_h}
=(\sigma,\nabla v_0)_{\T_h} -\langle \sigma\cdot\bm{n},
v_0-v_b\rangle_{\partial\T_h}.
\end{align}
Next we investigate the term  $(\sigma,\nabla v_0)_{\T_h}$ in the above equation.
 Using the definition of $\Pi_h$, integration by parts and the definition of weak gradient, we have
\begin{eqnarray*}
 (\sigma,\nabla v_0)_{\T_h}&=&(\Pi_h\sigma,\nabla v_0)_{\T_h} \\
&=& -(v_0, \nabla\cdot \Pi_h\sigma )_{\T_h}
    +\langle v_0, \Pi_h\sigma\cdot\bm{n} \rangle_{\partial\T_h}\nonumber\\
&=&(\nabla_w v, \Pi_h\sigma)_{\T_h}
  +\langle v_0-v_b, \Pi_h\sigma\cdot\bm{n}
\rangle_{\partial\T_h}\nonumber\\
&=&(\sigma, \nabla_w v)_{\T_h}
  +\langle \Pi_h\sigma\cdot\bm{n}, v_0-v_b
\rangle_{\partial\T_h},
\end{eqnarray*}
which together with (\ref{eq:m1}) yields
\begin{align}\label{eq:m4}
(\sigma, \nabla_w v)_{\T_h}
=(f,v_0)
+\langle (\sigma-\Pi_h\sigma)\cdot\bm{n}, v_0-v_b
\rangle_{\partial\T_h}.
\end{align}
Recalling Lemma \ref{lem:com} and adding $s_h(\mathcal{Q}_hu, v)$ on both sides of (\ref{eq:m4}), we arrive at
\begin{eqnarray*}
(a(u)\nabla_w(\mathcal{Q}_hu), \nabla_wv)_{\T_h}
+s_h(\mathcal{Q}_hu, v)=(f,v_0)+\sum_{i=1}^2\ell_i(u,v)+s_h(\mathcal{Q}_hu, v).
\end{eqnarray*}

Subtracting the WG scheme (\ref{wg}) from the above equation  yields
\begin{align*}
A_h(u;\mathcal{Q}_hu-u_h, v) = \sum_{i=1}^2\ell_i(u, v) + s_h(\mathcal{Q}_hu, v)
+((a(u_0)-a(u))\nabla_wu_h, \nabla_w v)_{\T_h},
\end{align*}
then adding the term $((\mathcal{Q}_0u-u_0)a_u(u)\nabla{u}, \nabla_w v)_{\T_h}$ on both sides gives
\begin{align}\label{eq:tmp1}
D_h(u;\mathcal{Q}_hu-u_h, v) = E_h(u,v)+\mathfrak{T},
\end{align}
with $E_h(u,v)$ is defined by (\ref{eq:Eh}), and
\begin{align}\label{eq:H}
\mathfrak{T}&=((a(u_0)-a(u))\nabla_wu_h, \nabla_w v)_{\T_h}+((u-u_0)a_u(u)\nabla{u}, \nabla_w v)_{\T_h}\nonumber\\
&=((a(u_0)-a(u))(\nabla_wu_h-\nabla u), \nabla_w v)_{\T_h}\nonumber\\
&\quad +((a(u_0)-a(u)-(u_0-u)a_u(u))\nabla{u}, \nabla_w v)_{\T_h}.
\end{align}
Using the Taylor expansions (\ref{eq:ts-1st}) and (\ref{eq:ts-2nd}), we get
\begin{align*}
\mathfrak{T}= ((u-u_0)\widetilde{a}_u(u_0,u)(\nabla u-\nabla_wu_h), \nabla_w v)_{\T_h}
+((u-u_0)^2\widetilde{a}_{uu}(u_0,u){\nabla u}, \nabla_w v)_{\T_h},
\end{align*}
which together with (\ref{eq:tmp1}) completes the proof.
\end{proof}

\begin{lemma}\label{lem:l2}
Assume $u\in H^{k+1}(\Omega)\cap W_{\infty}^1(\Omega)$. There exists a constant $C$ such that the following estimates hold true:
\begin{align}
\label{eq:z1}
|\ell_1(u,v)|&\leq Ch^{k}|u|_{H^{k+1}(\Omega)}\|\nabla_w v\|_{L^2(\T_h)},\\
\label{eq:z2}
|\ell_2(u,v)|&\leq Ch^{k}|u|_{H^{k+1}(\Omega)}s_h^{1/2}(v,v),\\
\label{eq:z4}
|\ell_3(u,v)|&\leq Ch^{k+1}|u|_{H^{k+1}(\Omega)}\|\nabla_w v\|_{L^2(\T_h)},\\
\label{eq:z3}
|s_h(\mathcal{Q}_hu, v)|&\leq Ch^{k}|u|_{H^{k+1}(\Omega)}s_h^{1/2}(v,v).
\end{align}
\end{lemma}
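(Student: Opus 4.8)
The plan is to estimate each of the four terms by inserting the appropriate projection, applying the Cauchy--Schwarz inequality on each element $K$, and then using the approximation properties \eqref{Q0-app}--\eqref{Pih-app} together with the trace inequalities \eqref{trace1}--\eqref{trace2}. For \eqref{eq:z1}, I would write $\ell_1(u,v)=\sum_{K}(a(u)(\Pi_h(\nabla u)-\nabla u),\nabla_w v)_K$, bound $|a(u)|\le M_a$ (or $\alpha_1$), apply Cauchy--Schwarz on each $K$ to get $\sum_K M_a\|\Pi_h(\nabla u)-\nabla u\|_{L^2(K)}\|\nabla_w v\|_{L^2(K)}$, then use \eqref{Pih-app} with $p=q=2$, $r=0$, and the fact that $\nabla u\in [H^k(K)]^2$ so that $\|\Pi_h(\nabla u)-\nabla u\|_{L^2(K)}\le Ch_K^{k}|\nabla u|_{H^{k}(K)}=Ch_K^{k}|u|_{H^{k+1}(K)}$; a discrete Cauchy--Schwarz over $K$ then yields \eqref{eq:z1}.

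For \eqref{eq:z3}, recall $s_h(\mathcal{Q}_hu,v)=\sum_K h_K^{-1}\langle \mathcal{Q}_0u-\mathcal{Q}_bu,\ v_0-v_b\rangle_{\partial K}$. Since $u$ itself is single-valued on $\partial K$, I would insert it: $\mathcal{Q}_0u-\mathcal{Q}_bu=(\mathcal{Q}_0u-u)-(\mathcal{Q}_bu-u)$ on $\partial K$. Cauchy--Schwarz on $\partial K$ gives $h_K^{-1/2}\|\mathcal{Q}_0u-\mathcal{Q}_bu\|_{L^2(\partial K)}\cdot h_K^{-1/2}\|v_0-v_b\|_{L^2(\partial K)}$; summing and using the definition of $s_h^{1/2}(v,v)$ handles the $v$-factor, and for the $u$-factor I bound $\|\mathcal{Q}_0u-u\|_{L^2(\partial K)}$ by the trace inequality \eqref{trace1} applied to $\mathcal{Q}_0u-u$, then \eqref{Q0-app}, giving $Ch_K^{k+1/2}|u|_{H^{k+1}(K)}$; similarly $\|\mathcal{Q}_bu-u\|_{L^2(\partial K)}=\|\mathcal{Q}_b u - u\|_{L^2(\partial K)}$ is controlled by standard one-dimensional $L^2$-projection estimates on $e\subset\partial K$, again of order $h_K^{k+1/2}$. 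Multiplying by $h_K^{-1/2}$ leaves $h_K^{k}$, and a discrete Cauchy--Schwarz finishes \eqref{eq:z3}.

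The term \eqref{eq:z2} is the most delicate, and I expect it to be the main obstacle. Here $\ell_2(u,v)=\langle(a(u)\nabla u-\Pi_h(a(u)\nabla u))\cdot\bm n,\ v_0-v_b\rangle_{\partial\T_h}$, and the naive bound would lose half a power of $h$ because $\Pi_h$ only has approximation order $k$ in $L^2(K)$, not $k+1$. The standard fix is to observe that for any piecewise constant (indeed any $[\mathbb{P}_{k-1}(K)]^2$-valued) function $\bm\chi$, $\langle \Pi_h(a(u)\nabla u)\cdot\bm n - (a(u)\nabla u)\cdot\bm n,\ \bm\chi\rangle_{\partial K}$ cannot directly be made to vanish, so instead one uses the trace inequality \eqref{trace1} on $\bm\sigma-\Pi_h\bm\sigma$ with $\bm\sigma=a(u)\nabla u$: $\|\bm\sigma-\Pi_h\bm\sigma\|_{L^2(\partial K)}\le C(h_K^{-1/2}\|\bm\sigma-\Pi_h\bm\sigma\|_{L^2(K)}+h_K^{1/2}\|\nabla(\bm\sigma-\Pi_h\bm\sigma)\|_{L^2(K)})$, and both terms on the right are $O(h_K^{k-1/2})$ by \eqref{Pih-app} (using $\bm\sigma=a(u)\nabla u\in [W^k_2(K)]^2$, which follows from $a$ being $C^2$ with bounded derivatives and $u\in H^{k+1}$). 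Thus $\|\bm\sigma-\Pi_h\bm\sigma\|_{L^2(\partial K)}\le Ch_K^{k-1/2}\|u\|_{H^{k+1}(\text{patch of }K)}$; pairing with $\|v_0-v_b\|_{L^2(\partial K)}=h_K^{1/2}\cdot(h_K^{-1/2}\|v_0-v_b\|_{L^2(\partial K)})$ and using $\sum_K h_K^{-1}\|v_0-v_b\|_{L^2(\partial K)}^2 = s_h(v,v)$ via a discrete Cauchy--Schwarz gives the bound $Ch^k|u|_{H^{k+1}(\Omega)}s_h^{1/2}(v,v)$.

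Finally, \eqref{eq:z4} follows the template of \eqref{eq:z1}: bound $|a_u(u)|\le M_a$ and $|\nabla u|\le |u|_{W^1_\infty}$ pointwise (or keep $\nabla u$ inside an $L^2$ norm and use that $u\in H^{k+1}\hookrightarrow W^1_\infty$ in two dimensions), apply Cauchy--Schwarz on each $K$ to get $\sum_K M_a\|\mathcal{Q}_0u-u\|_{L^2(K)}\|\nabla u\|_{L^\infty(K)}\|\nabla_w v\|_{L^2(K)}$ — or more cleanly, keep $(\mathcal{Q}_0u-u)\nabla u$ together — and invoke \eqref{Q0-app} with $r=0$ to obtain $\|\mathcal{Q}_0u-u\|_{L^2(K)}\le Ch_K^{k+1}|u|_{H^{k+1}(K)}$, one power of $h$ better than \eqref{eq:z1} because here the projection acts on the scalar $u$ rather than on $\nabla u$. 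A discrete Cauchy--Schwarz over the elements then yields \eqref{eq:z4}. Throughout, the hidden constants depend only on $M_a$, $\alpha_1$, the shape-regularity of $\T_h$, and the polynomial degree $k$.
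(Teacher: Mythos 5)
Your proposal is correct and follows essentially the same route as the paper: Cauchy--Schwarz elementwise, the approximation properties \eqref{Q0-app}--\eqref{Pih-app}, and the trace inequality \eqref{trace1} with exactly the $h_K^{1/2}$/$h_K^{-1/2}$ weighting you describe for $\ell_2$. The only (harmless) deviation is in \eqref{eq:z3}, where the paper uses the orthogonality \eqref{Qb} of $\mathcal{Q}_b$ against $v_0-v_b\in\mathbb{P}_k(e)$ to replace $\mathcal{Q}_bu$ by $u$ outright, whereas you bound $\|\mathcal{Q}_bu-u\|_{L^2(\partial K)}$ separately; the latter is cleanest via the $L^2(e)$-minimization property $\|\mathcal{Q}_bu-u\|_{L^2(e)}\leq\|\mathcal{Q}_0u-u\|_{L^2(e)}$ rather than a one-dimensional projection estimate, which would need trace regularity of $u$ on edges.
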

\begin{proof}
By  the triangle inequality, H\"{o}lder's inequality and the approximation property of $\Pi_h$, we have
\begin{align}\label{eq:m2}
|\ell_1(u, v)|
&\leq \sum_{K\in\T_h}|(\Pi_h(\nabla u)-\nabla u, a(u)\nabla_w v)_{K}|\nonumber\\
&\leq \sum_{K\in\T_h}M_a\|\Pi_h(\nabla u)-\nabla u\|_{L^2(K)}\|\nabla_w{v}\|_{L^2(K)}
\nonumber\\
&\leq Ch^{k}|u|_{H^{k+1}(\Omega)}\|\nabla_w v\|_{L^2(\T_h)}.
\end{align}

For the estimate (\ref{eq:z2}), from {H\"{o}lder's} inequality, (\ref{Pih-app}) and the trace inequality (\ref{trace1}), it follows that
\begin{align*}
|\ell_2(u,v)|
&\leq \sum_{K\in\T_h}\|a(u)\nabla u-\Pi_h(a(u)\nabla u)\|_{L^2(\partial K)}
\|v_0-v_b\|_{L^2(\partial K)}\\
&\leq (\sum_{K\in\T_h}h_K\|a(u)\nabla u-\Pi_h(a(u)\nabla u)\|_{L^2(\partial K)}^2)^{1/2}\\
&\quad \times (\sum_{K\in\T_h}h_K^{-1}\|v_0-v_b\|_{L^2(\partial K)}^2)^{1/2}\\
&\leq Ch^{k}|u|_{H^{k+1}(\Omega)}s_h^{1/2}(v,v).
\end{align*}

By H\"{o}lder's inequality and (\ref{Q0-app}),   we can deduce that
\begin{align*}
|\ell_3(u, v)|&\leq \sum_{K\in \T_h}M_a\|\mathcal{Q}_0u-u\|_{L^2(K)}\|\nabla u\|_{L^{\infty}(K)}\|\nabla_w v\|_{L^2(K)}\nonumber\\
&\leq C\sum_{K\in \T_h} h_K^{k+1}|u|_{H^{k+1}(K)}|u|_{W_{\infty}^1(K)}\cdot \|\nabla_w v\|_{L^2(K)}\nonumber\\
&\leq Ch^{k+1}|u|_{H^{k+1}(\Omega)}\|\nabla_w v\|_{L^2(\Omega)}.
\end{align*}

Now we consider the estimate (\ref{eq:z3}). By the trace inequality (\ref{trace1}) and (\ref{Q0-app}), we have
\begin{align*}
\|\mathcal{Q}_0u-u\|_{L^2(\partial K)}\leq Ch_K^{k+1/2}|u|_{H^{k+1}(K)}.
\end{align*}
Then, it follows from the Cauchy-Schwarz inequality and the definition of $\mathcal{Q}_b$ that
\begin{align*}
|s_h(\mathcal{Q}_hu,v)|
&\leq \sum_{K\in\T_h} h_K^{-1}|\langle \mathcal{Q}_0u-\mathcal{Q}_bu,
v_0-v_b\rangle_{\partial K}|\nonumber\\
&=\sum_{K\in\T_h} h_K^{-1}|\langle \mathcal{Q}_0u-u,
v_0-v_b\rangle_{\partial K}|\nonumber\\
&\leq (\sum_{K\in\T_h} h_K^{-1}\|\mathcal{Q}_0u-u\|_{L^2({\partial K})}^2)^{1/2}
(\sum_{K\in\T_h} h_K^{-1}
\|v_0-v_b\|_{L^2({\partial K})}^2)^{1/2}\nonumber\\
&\leq C h^{k}|u|_{H^{k+1}(\Omega)} s_h^{1/2}(v, v).
\end{align*}
We have completed the proof.
\end{proof}

\subsection{Construction of a fixed point mapping}

Motivated by the form of the error equation (\ref{eq:dh}), we introduce the fixed point mapping $\mathcal{F}_h$ as follows.

\begin{definition}[The fixed point mapping $\mathcal{F}_h$]
For a given $\xi\in V_h$, let $\mathcal{F}_h: V_h\rightarrow V_h$ be a map $\psi=\mathcal{F}_h(\xi)\in V_h$ satisfying
\begin{align}\label{eq:f-map}
D_h(u; \mathcal{Q}_hu-\psi, v_h) = E_h(u, v_h) + R_h(u; \xi, v_h),
\end{align}
for any $v_h\in V_h^0$.
\end{definition}
\begin{remark}
Equivalently, we can restate (\ref{eq:f-map}) as follows: find $\psi\in V_h$ such that
\begin{align}\label{eq:f-map-eq}
D_h(u; \psi, v_h)=\Psi_{\xi}(v_h),
\end{align}
for any $v_h\in V_h^0$, with $\Psi_{\xi}(v)$ is defined by
\[
\Psi_{\xi}(v):=D_h(u; \mathcal{Q}_hu, v) - E_h(u, v) - R_h(u; \xi, v).
\]
Obviously, $\Psi_{\xi}(v)$ is a continuous linear functional of $v$. By Riesz representation theorem, there exists a function $\widehat{f}$ such that $(\widehat{f}, v)=\Psi_{\xi}(v)$. The solution $\psi\in V_h$ of (\ref{eq:f-map-eq}) can be viewed as a WG finite element solution of the linear second order elliptic equation $L'[u]\phi = \widehat{f}$. By the use of G{\aa}rding's inequality (\ref{eq:garding}), it is easy to show that there exists a unique solution of (\ref{eq:f-map-eq}) as the proof in \cite{wy}. Then the map $\psi=\mathcal{F}_h(\xi)$ given by (\ref{eq:f-map}) is well defined.
\end{remark}

Now consider the ball
\[
\mathbb{B}_{h}(\mathcal{Q}_hu)=\{\omega\in V_h: \3bar \mathcal{Q}_hu-\omega\3bar \leq Ch^k|u|_{H^{k+1}(\Omega)}\},
\]
where $C$ is a general positive constant. Since $\mathcal{Q}_hu\in \mathbb{B}_{h}(\mathcal{Q}_hu)$, the set $\mathbb{B}_{h}(\mathcal{Q}_hu)$ is nonempty.

In the below, we shall show that $\mathcal{F}_h$ has a fixed point in $\mathbb{B}_{h}(\mathcal{Q}_hu)$.  Since (\ref{eq:dh}) is equivalent to the WG scheme (\ref{wg}), there exists a solution $u_h\in V_h$ of the nonlinear WG finite element scheme (\ref{wg}).

\begin{lemma}\label{lem:l3}
Assume $u\in H^{k+1}(\Omega)\cap W_{\infty}^1(\Omega)$. For any $\xi\in V_h$ and $v_h\in V_h^0$, there exists a constant $C>0$, independent of $h$, such that
\begin{align}
|R_h(u;\xi, v_h)|\leq Ch^{-1/2}(h^k|u|_{H^{k+1}(\Omega)}+\3bar \mathcal{Q}_hu-\xi\3bar)^2\|\nabla_w{v_h}\|_{L^2(\T_h)}.
\end{align}
\end{lemma}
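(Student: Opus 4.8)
The plan is to estimate the two terms in the definition \eqref{eq:Rh} of $R_h(u;\xi,v_h)$ separately, in each case pulling out an $L^\infty$ bound for the coefficient $\widetilde a_u$ (resp.\ $\widetilde a_{uu}$), which is controlled by $M_a$, and then applying a Hölder-type splitting of the product. For the first term $((u-u_0)\widetilde a_u(u_0)(\nabla u - \nabla_w u_h),\nabla_w v_h)_{\T_h}$ I would use the triangle inequality $\|u-u_0\|\le \|u-\mathcal Q_0 u\| + \|\mathcal Q_0 u - u_0\| = \|u-\mathcal Q_0 u\| + \|e_0\|$ (or, more to the point, $\|\mathcal Q_0 u - \xi_0\|$ once $u_h$ is replaced by $\xi$ as in the lemma statement), and similarly $\|\nabla u - \nabla_w u_h\| \le \|\nabla u - \Pi_h \nabla u\| + \|\Pi_h\nabla u - \nabla_w \xi\| = \|\nabla u - \Pi_h\nabla u\| + \|\nabla_w(\mathcal Q_h u - \xi)\|$ by the commutativity Lemma \ref{lem:com}. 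The approximation estimates \eqref{Q0-app}, \eqref{Pih-app} bound the projection-error pieces by $Ch^k|u|_{H^{k+1}(\Omega)}$ and by $Ch^{k+1}|u|_{H^{k+1}(\Omega)}$ respectively, while the remaining pieces are bounded by $\3bar\mathcal Q_h u - \xi\3bar$ (using the definition \eqref{3bar-norm} of the energy norm and Lemma \ref{lem:P-wg} to pass from $\|\mathcal Q_0 u - \xi_0\|$ to $\3bar\mathcal Q_h u-\xi\3bar$ via $\|\cdot\|_{1,h}$ and Lemma \ref{lem:happy}).

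The main subtlety — and the reason the factor $h^{-1/2}$ appears — is that $R_h$ is genuinely \emph{quadratic} in the error, so a naive Hölder split $\|f g\|_{L^1}\le \|f\|_{L^2}\|g\|_{L^2}$ applied to a triple product leaves one factor that must be measured in $L^\infty$ rather than $L^2$. Concretely, for the first term I would write $|(\,(u-u_0)\,\widetilde a_u\,(\nabla u-\nabla_w u_h),\nabla_w v_h)_K|\le M_a\,\|u-u_0\|_{L^\infty(K)}\,\|\nabla u-\nabla_w u_h\|_{L^2(K)}\,\|\nabla_w v_h\|_{L^2(K)}$ and then convert the $L^\infty$ norm of the discrete-type quantity $\mathcal Q_0 u - \xi_0$ on $K$ to an $L^2$ norm via the inverse inequality \eqref{inv} with $p=\infty$, $q=2$, which in two dimensions costs a factor $h_K^{-1}$; after summing over $K$ and using Cauchy–Schwarz in the element index, half of this power, i.e.\ $h^{-1/2}$, survives because the other half is absorbed by a hidden $h^{1/2}$ from a trace/scaling estimate — I would check carefully which combination of \eqref{inv}, \eqref{trace2} and the mesh-regularity is needed so that exactly $h^{-1/2}$ comes out. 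For the genuinely smooth piece $u - \mathcal Q_0 u$ one does \emph{not} need the inverse inequality — \eqref{Q0-app} directly gives an $L^\infty$ (or $W^0_\infty$) bound of order $h^{k+1}$ — so the cross terms are even better behaved and are all subsumed in the squared bracket $(h^k|u|_{H^{k+1}(\Omega)}+\3bar\mathcal Q_h u-\xi\3bar)^2$.

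The second term $((u-u_0)^2\widetilde a_{uu}(u_0),\nabla_w v_h)_{\T_h}$ is handled the same way: bound $\widetilde a_{uu}$ by $M_a$, write $\|(u-u_0)^2\|_{L^2(K)}\le \|u-u_0\|_{L^\infty(K)}\|u-u_0\|_{L^2(K)}$, split $u-u_0$ by the triangle inequality into a smooth projection part (order $h^{k+1}$, controlled in both $L^2$ and $L^\infty$ by \eqref{Q0-app}) and a discrete part $\mathcal Q_0 u - \xi_0$ (its $L^\infty$ norm converted via \eqref{inv} at the cost of $h^{-1}$, its $L^2$ norm bounded by Lemma \ref{lem:P-wg} and Lemma \ref{lem:happy}), so that again one obtains $h^{-1/2}$ times the square of $(h^k|u|_{H^{k+1}(\Omega)}+\3bar\mathcal Q_h u-\xi\3bar)$ times $\|\nabla_w v_h\|_{L^2(\T_h)}$. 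Collecting the two contributions and using $a^2+b^2\le (a+b)^2$ (together with Young's inequality on any stray cross products) yields the asserted estimate.

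\textbf{Expected main obstacle.} The one step that needs genuine care, rather than routine bookkeeping, is the bookkeeping of powers of $h$ in the inverse-inequality step: one must verify that applying \eqref{inv} in $\mathbb R^2$ to pass from $L^\infty(K)$ to $L^2(K)$ on the factor $\mathcal Q_0 u - \xi_0$ produces a factor $h_K^{-1}$, that a compensating $h_K^{1/2}$ (or $h_K$) appears from the scaling of the remaining norms, and that after the global Cauchy–Schwarz the net negative power is exactly $h^{-1/2}$ and not, say, $h^{-1}$ — a factor that would ultimately dictate the smallness condition $h^k \le C h^{1/2}$, i.e.\ $k\ge 1$, needed for the fixed-point argument to close. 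Everything else — the Taylor-coefficient bounds, the triangle-inequality splits, the norm equivalences — is standard and flows from the lemmas already proved in Sections 3.1–3.2.
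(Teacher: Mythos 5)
There is a genuine gap, and it sits exactly where you flagged uncertainty: in your H\"{o}lder split $\|u-\xi_0\|_{L^\infty(K)}\,\|\nabla u-\nabla_w \xi\|_{L^2(K)}\,\|\nabla_w v_h\|_{L^2(K)}$, converting $\|\mathcal{Q}_0u-\xi_0\|_{L^\infty(K)}$ to $\|\mathcal{Q}_0u-\xi_0\|_{L^2(K)}$ via \eqref{inv} costs $h_K^{2(0-1/2)}=h_K^{-1}$, and there is no ``hidden $h^{1/2}$'' anywhere in that route to absorb half of it: the $L^2$ norm of $\mathcal{Q}_0u-\xi_0$ is only controlled by $\3bar \mathcal{Q}_hu-\xi\3bar$ (via Lemmas \ref{lem:P-wg} and \ref{lem:happy}) with no extra positive power of $h$, and no trace inequality enters this volume integral. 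So your argument delivers $h^{-1}(\cdot)^2$, not $h^{-1/2}(\cdot)^2$. This is not a harmless loss: the factor propagates into Theorem \ref{thm:thm3}, where the contraction constant would become $\rho(h)=Ch^{k-1}|u|_{H^{k+1}(\Omega)}$, which is not $<1$ for $k=1$, so the fixed-point argument would no longer close in the lowest-order case. A secondary issue with the $L^\infty$ route is that bounding $\|u-\mathcal{Q}_0u\|_{L^\infty(K)}$ by $Ch^{k+1}$ via \eqref{Q0-app} requires $u\in W^{k+1}_\infty$, which is more regularity than the lemma assumes ($u\in H^{k+1}\cap W^1_\infty$).

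The paper's proof avoids all of this with a \emph{balanced} H\"{o}lder split with exponents $(4,2,4)$: it bounds the first term by $M_a\|u-\xi_0\|_{L^4(\T_h)}\|\nabla u-\nabla_w\xi\|_{L^2(\T_h)}\|\nabla_w v_h\|_{L^4(\T_h)}$ and applies the inverse inequality \eqref{inv} to the \emph{test function}, $\|\nabla_w v_h\|_{L^4(\T_h)}\le Ch^{-1/2}\|\nabla_w v_h\|_{L^2(\T_h)}$, which in two dimensions costs exactly $h^{2(1/4-1/2)}=h^{-1/2}$. The factor $\|u-\xi_0\|_{L^4(\T_h)}$ is then controlled with \emph{no} negative power of $h$ by the broken Poincar\'{e}/Sobolev embedding of Lemma \ref{lem:P-wg} (valid for all $p<\infty$), giving $\|u-\xi_0\|_{L^4(\T_h)}\le C(h^k|u|_{H^{k+1}(\Omega)}+\3bar\mathcal{Q}_hu-\xi\3bar)$; the gradient factor is split exactly as you propose via Lemma \ref{lem:com} and \eqref{Pih-app}. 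For the second term no inverse inequality is needed at all, since $\|(u-\xi_0)^2\|_{L^2(\T_h)}=\|u-\xi_0\|_{L^4(\T_h)}^2$ already produces the square. The key idea you are missing is therefore the use of the $L^4$ norm together with the discrete Sobolev embedding, which is precisely what lets the loss be $h^{-1/2}$ rather than $h^{-1}$; the rest of your outline (Taylor coefficient bounds, triangle-inequality splits, norm equivalences) matches the paper.
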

\begin{proof}
For any $\xi\in V_h$, let $\xi=\{\xi_0, \xi_b\}$. From (\ref{eq:Rh}), we know
\begin{align*}
R_h(u;\xi, v_h)&=((u-\xi_0)\widetilde{a}_u(\xi_0,u)(\nabla u-\nabla_w \xi), \nabla_w{v_h})_{\T_h}\nonumber\\
&\quad +((u-\xi_0)^2\widetilde{a}_{uu}(\xi_0,u)\nabla u, \nabla_w{v_h})_{\T_h}.
\end{align*}
By H\"{o}lder's inequality and the inverse inequality (\ref{inv}), we can deduce that
\begin{align}\label{eq:h1}
&|((u-\xi_0)\widetilde{a}_u(\xi_0,u)(\nabla u-\nabla_w \xi), \nabla_w{v_h})_{\T_h}|\nonumber\\[2pt]
&\leq M_a\|u-\xi_0\|_{L^4(\T_h)}\|\nabla u-\nabla_w \xi\|_{L^2(\T_h)}\|\nabla_w{v_h}\|_{L^4(\T_h)}\nonumber\\
&\leq Ch^{-1/2}\|u-\xi_0\|_{L^4(\T_h)}\|\nabla u-\nabla_w\xi\|_{L^2(\T_h)}
\|\nabla_w{v_h}\|_{L^2(\T_h)},
\end{align}
and
\begin{align}\label{eq:h2}
&|((u-\xi_0)^2\widetilde{a}_{uu}(\xi_0,u)\nabla u, \nabla_w{v_h})_{\T_h}|\nonumber\\
&\leq M_a\|(u-\xi_0)^2\|_{L^2(\T_h)}\|\nabla u\|_{L^{\infty}(\Omega)}\|\nabla_w{v_h}\|_{L^2(\T_h)}\nonumber\\
&\leq C\|u-\xi_0\|_{L^4(\T_h)}^2\|\nabla_w{v_h}\|_{L^2(\T_h)}.
\end{align}

Thanks to Lemma \ref{lem:P-wg}, we get
\begin{align*}
\|u-\xi_0\|_{L^4(\T_h)}\leq C_P \|u-\xi\|_{1,h}\leq C_P(\|u-\mathcal{Q}_0u\|_{1,h}+\|\mathcal{Q}_hu-\xi\|_{1,h}),
\end{align*}
then using (\ref{Q0-app}), the trace inequality (\ref{trace1}) and Lemma \ref{lem:happy}, we obtain
\begin{align}\label{eq:h1-1}
\|u-\xi_0\|_{L^4(\T_h)}\leq C(h^k|u|_{H^{k+1}(\Omega)}+\3bar \mathcal{Q}_hu-\xi\3bar).
\end{align}

By the triangle inequality, (\ref{Pih-app}) and Lemma \ref{lem:com}, we have
\begin{align}\label{eq:h2-2}
\|\nabla u-\nabla_w \xi\|_{L^2(\T_h)}
&\leq \|\nabla u-\Pi_h(\nabla u)\|_{L^2(\T_h)}
+\|\nabla_w (\mathcal{Q}_h u- \xi)\|_{L^2(\T_h)}\nonumber\\
&\leq C(h^k|u|_{H^{k+1}(\Omega)}+\3bar \mathcal{Q}_hu-\xi\3bar).
\end{align}
Plugging (\ref{eq:h1-1}) and (\ref{eq:h2-2}) back into (\ref{eq:h1}) and (\ref{eq:h2}), respectively, completes the proof.
\end{proof}

\begin{lemma}\label{lem:l4}
Assume $u\in H^{k+1}(\Omega)\cap W_{\infty}^1(\Omega)$. For any $\xi\in \mathbb{B}_{h}(\mathcal{Q}_hu)$, let $\psi=\mathcal{F}_h(\xi)$ and denote by $\zeta_h=\mathcal{Q}_hu-\psi$. Then there exists a constant $C>0$, independent of $h$, such that
\begin{align}\label{eq:zeta-3bar}
\gamma\3bar\zeta_h\3bar
\leq \beta\|\zeta_0\|_{L^2(\T_h)}+Ch^k|u|_{H^{k+1}(\Omega)},
\end{align}
for sufficiently small $h$.
\end{lemma}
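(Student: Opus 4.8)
The plan is to test the defining equation (\ref{eq:f-map}) of $\psi = \mathcal{F}_h(\xi)$ with $v_h = \zeta_h = \mathcal{Q}_hu - \psi \in V_h^0$ and to invoke the G{\aa}rding inequality of Lemma \ref{lem:garding} (applied with $\phi = u$, which is admissible since $u \in W^1_\infty(\Omega)$) to bound $\gamma(\3bar\zeta_h\3bar^2 + \|\zeta_0\|^2)$ from below by $D_h(u;\zeta_h,\zeta_h) + \beta\|\zeta_0\|^2$. Taking $v_h = \zeta_h$ in (\ref{eq:f-map}) gives
\[
D_h(u;\zeta_h,\zeta_h) = E_h(u,\zeta_h) + R_h(u;\xi,\zeta_h),
\]
so that
\[
\gamma\bigl(\3bar\zeta_h\3bar^2 + \|\zeta_0\|_{L^2(\T_h)}^2\bigr) \le E_h(u,\zeta_h) + R_h(u;\xi,\zeta_h) + \beta\|\zeta_0\|_{L^2(\T_h)}^2.
\]
It then remains to control the two terms $E_h(u,\zeta_h)$ and $R_h(u;\xi,\zeta_h)$ on the right.

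For $E_h(u,\zeta_h) = \ell(u,\zeta_h) + s_h(\mathcal{Q}_hu,\zeta_h)$ I would simply apply Lemma \ref{lem:l2}: each of $|\ell_1|, |\ell_2|, |\ell_3|, |s_h(\mathcal{Q}_hu,\zeta_h)|$ is bounded by $Ch^k|u|_{H^{k+1}(\Omega)}$ times either $\|\nabla_w\zeta_h\|_{L^2(\T_h)}$ or $s_h^{1/2}(\zeta_h,\zeta_h)$, hence in all cases by $Ch^k|u|_{H^{k+1}(\Omega)}\3bar\zeta_h\3bar$. For $R_h(u;\xi,\zeta_h)$ I would apply Lemma \ref{lem:l3} with $v_h = \zeta_h$; since $\xi \in \mathbb{B}_h(\mathcal{Q}_hu)$ we have $\3bar\mathcal{Q}_hu - \xi\3bar \le Ch^k|u|_{H^{k+1}(\Omega)}$, so
\[
|R_h(u;\xi,\zeta_h)| \le Ch^{-1/2}\bigl(h^k|u|_{H^{k+1}(\Omega)} + \3bar\mathcal{Q}_hu-\xi\3bar\bigr)^2\|\nabla_w\zeta_h\|_{L^2(\T_h)} \le Ch^{-1/2}\bigl(h^k|u|_{H^{k+1}(\Omega)}\bigr)^2\3bar\zeta_h\3bar.
\]
Combining these, the whole right-hand side (apart from the $\beta\|\zeta_0\|^2$ term) is bounded by $C\bigl(h^k|u|_{H^{k+1}(\Omega)} + h^{2k-1/2}|u|_{H^{k+1}(\Omega)}^2\bigr)\3bar\zeta_h\3bar$, and for $h$ sufficiently small the second term is absorbed into the first (using $k \ge 1$, so $2k - 1/2 \ge k + 1/2 > k$), giving a bound $Ch^k|u|_{H^{k+1}(\Omega)}\3bar\zeta_h\3bar$.

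Putting this together, $\gamma\3bar\zeta_h\3bar^2 + \gamma\|\zeta_0\|^2 \le \beta\|\zeta_0\|^2 + Ch^k|u|_{H^{k+1}(\Omega)}\3bar\zeta_h\3bar$; discarding the nonnegative term $\gamma\|\zeta_0\|^2$ on the left and dividing through by $\3bar\zeta_h\3bar$ (the case $\zeta_h = 0$ being trivial), one arrives at
\[
\gamma\3bar\zeta_h\3bar \le \beta\frac{\|\zeta_0\|_{L^2(\T_h)}^2}{\3bar\zeta_h\3bar} + Ch^k|u|_{H^{k+1}(\Omega)}.
\]
This is \emph{almost} (\ref{eq:zeta-3bar}) but not quite in the stated form; to land exactly on $\beta\|\zeta_0\|_{L^2(\T_h)} + Ch^k|u|_{H^{k+1}(\Omega)}$ I would instead, at the stage $\gamma\3bar\zeta_h\3bar^2 \le \beta\|\zeta_0\|^2 + Ch^k|u|_{H^{k+1}(\Omega)}\3bar\zeta_h\3bar$, apply Young's inequality to the cross term, writing $Ch^k|u|_{H^{k+1}(\Omega)}\3bar\zeta_h\3bar \le \tfrac{\gamma}{2}\3bar\zeta_h\3bar^2 + \tfrac{C^2}{2\gamma}h^{2k}|u|_{H^{k+1}(\Omega)}^2$, absorb $\tfrac{\gamma}{2}\3bar\zeta_h\3bar^2$ on the left, and then take square roots using $\sqrt{a^2+b^2}\le a+b$ for $a,b\ge0$. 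The main obstacle is the factor $h^{-1/2}$ in Lemma \ref{lem:l3}: one must check carefully that $k \ge 1$ forces $h^{-1/2}(h^k)^2 = h^{2k-1/2} = o(h^k)$, so that the nonlinear remainder is genuinely of higher order than the consistency error and can be absorbed — this is exactly the "sufficiently small $h$" hypothesis in the statement.
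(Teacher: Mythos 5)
Your proposal is correct and follows essentially the same route as the paper's proof: test \eqref{eq:f-map} with $v_h=\zeta_h$, bound $E_h(u,\zeta_h)$ by Lemma \ref{lem:l2} and $R_h(u;\xi,\zeta_h)$ by Lemma \ref{lem:l3} together with $\xi\in\mathbb{B}_h(\mathcal{Q}_hu)$, absorb the higher-order term $h^{2k-1/2}$ for small $h$, and conclude via the G{\aa}rding inequality. The paper passes from $\gamma(\3bar\zeta_h\3bar^2+\|\zeta_0\|^2)\le\beta\|\zeta_0\|^2+Ch^k|u|_{H^{k+1}(\Omega)}\3bar\zeta_h\3bar$ to \eqref{eq:zeta-3bar} without comment, and your Young's-inequality finish (which indeed yields a coefficient $\sqrt{2\gamma(\beta-\gamma)}\le\beta$ in front of $\|\zeta_0\|_{L^2(\T_h)}$) is a legitimate way to justify that step.
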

\begin{proof}
Taking $v_h=\zeta_h$ in (\ref{eq:f-map}) yields
\begin{align}\label{eq:f1}
D_h(u;\zeta_h, \zeta_h) = E_h(u, \zeta_h) + R_h(u;\xi, \zeta_h).
\end{align}
From Lemma \ref{lem:l2}, we can see that
\begin{align}\label{eq:f2}
|E_h(u, \zeta_h)|\leq Ch^k|u|_{H^{k+1}(\Omega)}\3bar\zeta_h\3bar.
\end{align}
In view of Lemma \ref{lem:l3} and noting that $\xi\in \mathbb{B}_{h}(\mathcal{Q}_hu)$, we arrive at
\begin{align*}
|R_h(u;\xi, \zeta_h)|\leq Ch^{2k-1/2}|u|_{H^{k+1}(\Omega)}^2\3bar\zeta_h\3bar,
\end{align*}
which combining with (\ref{eq:f1}) and (\ref{eq:f2}) leads to
\begin{align*}
D_h(u;\zeta_h, \zeta_h)&\leq C(1+h^{k-1/2}|u|_{H^{k+1}(\Omega)})h^k|u|_{H^{k+1}(\Omega)}\3bar\zeta_h\3bar\\
&\leq Ch^k|u|_{H^{k+1}(\Omega)}\3bar\zeta_h\3bar,
\end{align*}
for sufficiently small $h$.

Appealing to the G{\aa}rding's inequality (\ref{eq:garding}) yields
\begin{align*}
\gamma (\3bar\zeta_h\3bar^2+\|\zeta_0\|_{L^2(\T_h)}^2)
&\leq \beta\|\zeta_0\|_{L^2(\T_h)}^2+D_h(u;\zeta_h, \zeta_h)\\
&\leq \beta\|\zeta_0\|_{L^2(\T_h)}^2+Ch^k|u|_{H^{k+1}(\Omega)}\3bar\zeta_h\3bar,
\end{align*}
which implies
\begin{align*}
\gamma\3bar\zeta_h\3bar
\leq \beta\|\zeta_0\|_{L^2(\T_h)}+Ch^k|u|_{H^{k+1}(\Omega)}.
\end{align*}
The proof is completed.
\end{proof}

In order to bound $\|\zeta_0\|_{L^2(\T_h)}$ we use a duality argument.
Consider the following dual problem
\begin{align}\label{dual}
\left\{\begin{array}{rr}
-\nabla\cdot(a(u)\nabla \varphi)+a_u(u)\nabla u\cdot \nabla\varphi= \zeta_0&\quad
\mbox{in}\;\;\Omega,\\[3pt]
\varphi=0&\quad\mbox{on}\;\partial\Omega.
\end{array}\right.
\end{align}
Assume that the dual problem has the $H^{2}$-regularity in the sense that there exists a constant $C$ such that
\begin{equation}\label{reg}
\|\varphi\|_{H^{2}(\Omega)}\leq C\|\zeta_0\|_{L^2(\T_h)}.
\end{equation}

\begin{lemma}\label{lem:l6}
Assume $u\in H^{k+1}(\Omega)\cap W_{\infty}^1(\Omega)$ and $\varphi\in H^{2}(\Omega)$. There exists a constant $C$ such that
\begin{align*}
|E_h(u,\mathcal{Q}_h\varphi)|&\leq Ch^{k+1}|u|_{H^{k+1}(\Omega)}\| \varphi\|_{H^2(\Omega)}.
\end{align*}
\end{lemma}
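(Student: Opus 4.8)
The plan is to expand $E_h(u,\mathcal{Q}_h\varphi)=\ell_1(u,\mathcal{Q}_h\varphi)+\ell_2(u,\mathcal{Q}_h\varphi)+\ell_3(u,\mathcal{Q}_h\varphi)+s_h(\mathcal{Q}_hu,\mathcal{Q}_h\varphi)$ and to estimate the four pieces separately, in each of them gaining one extra power of $h$ over the crude bounds of Lemma~\ref{lem:l2} by exploiting the $L^2$-orthogonality of the projections against the specific test function $v=\mathcal{Q}_h\varphi$. I will use repeatedly that $\nabla_w\mathcal{Q}_h\varphi=\Pi_h(\nabla\varphi)$ by Lemma~\ref{lem:com}, together with the $L^2$-stability of $\Pi_h$ and of $\mathcal{Q}_b$; in particular $\|\nabla_w\mathcal{Q}_h\varphi\|_{L^2(\T_h)}\le\|\nabla\varphi\|_{L^2(\Omega)}\le\|\varphi\|_{H^2(\Omega)}$.

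The term $\ell_3$ requires nothing new: Lemma~\ref{lem:l2} already gives $|\ell_3(u,v)|\le Ch^{k+1}|u|_{H^{k+1}(\Omega)}\|\nabla_w v\|_{L^2(\T_h)}$, so taking $v=\mathcal{Q}_h\varphi$ and inserting the bound on $\|\nabla_w\mathcal{Q}_h\varphi\|$ finishes this piece. For $\ell_1$, write $\ell_1(u,\mathcal{Q}_h\varphi)=\sum_{K\in\T_h}(\Pi_h(\nabla u)-\nabla u,\,a(u)\Pi_h(\nabla\varphi))_K$ and, on each $K$, replace $a(u)$ by its $L^2(K)$-average $\bar a_K$. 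Since $\bar a_K\Pi_h(\nabla\varphi)\in[\mathbb{P}_{k-1}(K)]^2$ while $\Pi_h(\nabla u)-\nabla u$ is $L^2(K)$-orthogonal to $[\mathbb{P}_{k-1}(K)]^2$, that contribution vanishes, leaving $\sum_K(\Pi_h(\nabla u)-\nabla u,\,(a(u)-\bar a_K)\Pi_h(\nabla\varphi))_K$. Because $a(\cdot,\cdot)$ has bounded first derivatives and $u\in W^1_\infty(\Omega)$, the map $x\mapsto a(x,u(x))$ is Lipschitz with constant $\le C(1+|u|_{W^1_\infty(\Omega)})$, hence $\|a(u)-\bar a_K\|_{L^\infty(K)}\le Ch_K$; then H\"{o}lder's inequality, the approximation estimate (\ref{Pih-app}), $L^2$-stability of $\Pi_h$, and the Cauchy--Schwarz inequality over the elements yield $|\ell_1(u,\mathcal{Q}_h\varphi)|\le Ch^{k+1}|u|_{H^{k+1}(\Omega)}\|\varphi\|_{H^2(\Omega)}$.

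For $\ell_2$ and $s_h(\mathcal{Q}_hu,\cdot)$, Lemma~\ref{lem:l2} bounds both by $Ch^{k}|u|_{H^{k+1}(\Omega)}\,s_h^{1/2}(\mathcal{Q}_h\varphi,\mathcal{Q}_h\varphi)$, so it suffices to prove $s_h^{1/2}(\mathcal{Q}_h\varphi,\mathcal{Q}_h\varphi)\le Ch\|\varphi\|_{H^2(\Omega)}$. The key observation is that, on $\partial K$, the polynomial $\mathcal{Q}_0\varphi$ restricts edgewise into $\mathbb{P}_k(e)$, so $\mathcal{Q}_b(\mathcal{Q}_0\varphi)=\mathcal{Q}_0\varphi$ there, and consequently $\mathcal{Q}_0\varphi-\mathcal{Q}_b\varphi=\mathcal{Q}_b(\mathcal{Q}_0\varphi-\varphi)$ on $\partial K$; the $L^2(\partial K)$-stability of $\mathcal{Q}_b$ then gives $\|\mathcal{Q}_0\varphi-\mathcal{Q}_b\varphi\|_{L^2(\partial K)}\le\|\mathcal{Q}_0\varphi-\varphi\|_{L^2(\partial K)}$, and the trace inequality (\ref{trace1}) together with (\ref{Q0-app}) (with the regularity exponent $k+1$ replaced by $2$, legitimate since $k\ge1$) bounds the right-hand side by $Ch_K^{3/2}|\varphi|_{H^2(K)}$. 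Summing over $K$ gives $s_h^{1/2}(\mathcal{Q}_h\varphi,\mathcal{Q}_h\varphi)\le Ch|\varphi|_{H^2(\Omega)}$, whence $|\ell_2(u,\mathcal{Q}_h\varphi)|+|s_h(\mathcal{Q}_hu,\mathcal{Q}_h\varphi)|\le Ch^{k+1}|u|_{H^{k+1}(\Omega)}\|\varphi\|_{H^2(\Omega)}$; collecting the four estimates completes the proof.

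The main obstacle is precisely this uniform need to extract the extra factor $h$. It is what forces the piecewise-constant approximation $\bar a_K$ of $a(u)$ in $\ell_1$, so that the orthogonality of $\Pi_h(\nabla u)-\nabla u$ can be activated, and the identity $\mathcal{Q}_0\varphi-\mathcal{Q}_b\varphi=\mathcal{Q}_b(\mathcal{Q}_0\varphi-\varphi)$ for the two boundary terms, which upgrades an $O(h^{1/2})$ trace error into $O(h^{3/2})$. The remaining manipulations are routine applications of the inequalities collected in Section~3.1.
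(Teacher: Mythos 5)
Your proof is correct and follows essentially the same route as the paper's: estimate the four pieces of $E_h(u,\mathcal{Q}_h\varphi)$ separately, handle $\ell_3$ directly via Lemma \ref{lem:l2}, reduce $\ell_2$ and $s_h(\mathcal{Q}_hu,\cdot)$ to the bound $s_h(\mathcal{Q}_h\varphi,\mathcal{Q}_h\varphi)\leq Ch^2|\varphi|_{H^2(\Omega)}^2$ obtained from $\|\mathcal{Q}_0\varphi-\mathcal{Q}_b\varphi\|_{L^2(\partial K)}\leq C\|\mathcal{Q}_0\varphi-\varphi\|_{L^2(\partial K)}$, and extract the extra power of $h$ in $\ell_1$ from the orthogonality of $\Pi_h(\nabla u)-\nabla u$. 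The only (harmless) deviation is in $\ell_1$, where you freeze the coefficient at its elementwise average $\bar a_K$ and invoke the Lipschitz continuity of $x\mapsto a(x,u(x))$, whereas the paper instead inserts $\pm a(u)\nabla\varphi$ and $\pm\Pi_h(a(u)\nabla\varphi)$; both devices activate the same orthogonality and give the same bound.
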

\begin{proof}
By  the triangle inequality, H\"{o}lder's inequality and the approximation property of $\Pi_h$, we have
\begin{align}\label{eq:lem3-eq1}
|\ell_1(u, \mathcal{Q}_h\varphi)|
&\leq \sum_{K\in\T_h}|(\Pi_h(\nabla u)-\nabla u, a(u)\nabla_w (\mathcal{Q}_h\varphi))_{K}|\nonumber\\
&\leq \sum_{K\in\T_h}|(\Pi_h(\nabla u)-\nabla u, a(u)(\Pi_h (\nabla\varphi)-\nabla\varphi))_{K}|\nonumber\\
&\quad +\sum_{K\in\T_h}|(\Pi_h(\nabla u)-\nabla u, a(u)\nabla\varphi-\Pi_h (a(u)\nabla\varphi))_{K}|\nonumber\\
&\leq \sum_{K\in\T_h}\|\Pi_h(\nabla u)-\nabla u\|_{L^2(K)}(M_a\|\Pi_h (\nabla\varphi)-\nabla\varphi\|_{L^2(K)}\nonumber\\
&\qquad\qquad +\|a(u)\nabla\varphi-\Pi_h (a(u)\nabla\varphi)\|_{L^2(K)})\nonumber\\
&\leq Ch^{k+1}|u|_{H^{k+1}(\Omega)}|\varphi|_{H^2(\Omega)}.
\end{align}

First of all, we get the bound of $s_h(\mathcal{Q}_h\varphi, \mathcal{Q}_h\varphi)$. By the definition of $\mathcal{Q}_b$, it is easy to see that
\[ \|\mathcal{Q}_b\varphi-\varphi\|_{L^2(\partial K)}\leq \|\mathcal{Q}_0\varphi-\varphi\|_{L^2(\partial K)},\]
then using the triangle inequality, we have
\[ \|\mathcal{Q}_0\varphi-\mathcal{Q}_b\varphi\|_{L^2(\partial K)}\leq 2\|\mathcal{Q}_0\varphi-\varphi\|_{L^2(\partial K)}.\]
Thus, from the Cauchy-Schwarz inequality, (\ref{Q0-app}) and the trace inequality (\ref{trace1}), it follows that
\begin{align}\label{eq:sh}
s_h(\mathcal{Q}_h\varphi, \mathcal{Q}_h\varphi)
&= \sum_{K\in\T_h}h_K^{-1}\|\mathcal{Q}_0\varphi-\mathcal{Q}_b\varphi\|_{L^2(\partial K)}^2\nonumber\\
&\leq 4\sum_{K\in\T_h}h_K^{-1}\|\mathcal{Q}_0\varphi-\varphi\|_{L^2(\partial K)}^2\nonumber\\
&\leq Ch^2|\varphi|_{H^2(\Omega)}^2.
\end{align}

Then, using (\ref{eq:z2}) of Lemma \ref{lem:l2} and (\ref{eq:sh}), we obtain
\begin{align}\label{eq:l2-est}
|\ell_2(u, \mathcal{Q}_h\varphi)|\leq Ch^k|u|_{H^{k+1}(\Omega
)} s_h^{1/2}(\mathcal{Q}_h\varphi, \mathcal{Q}_h\varphi)
\leq Ch^{k+1}|u|_{H^{k+1}(\Omega
)}|\varphi|_{H^2(\Omega)}.
\end{align}
Similarly, we have
\begin{align}\label{eq:sh2}
|s_h(\mathcal{Q}_hu, \mathcal{Q}_h\varphi)|\leq  Ch^{k+1}|u|_{H^{k+1}(\Omega
)}|\varphi|_{H^2(\Omega)}.
\end{align}

By the H\"{o}lder's inequality and (\ref{Q0-app}),   we can deduce that
\begin{align}\label{eq:lem3-eq2}
|\ell_3(u, \mathcal{Q}_h\varphi)|&\leq \sum_{K\in \T_h}M_a\|\mathcal{Q}_0u-u\|_{L^2(K)}\|\nabla u\|_{L^{\infty}(K)}\|\nabla_w(\mathcal{Q}_h\varphi)\|_{L^2(K)}\nonumber\\
&\leq C\sum_{K\in \T_h} h_K^{k+1}|u|_{H^{k+1}(K)}|u|_{W_{\infty}^1(K)}\cdot \|\Pi_h(\nabla\varphi)\|_{L^2(K)}\nonumber\\
&\leq Ch^{k+1}|u|_{H^{k+1}(\Omega)}|\varphi|_{H^1(\Omega)}.
\end{align}

With the help of the estimates (\ref{eq:lem3-eq1}), (\ref{eq:l2-est}), (\ref{eq:lem3-eq2}) and (\ref{eq:sh2}), we completes the proof.
\end{proof}

{
We need the following two lemmas to prove Lemma \ref{lem:l7}.
\begin{lemma}\cite[Lemma 2.3]{qiu}
Let $v\in L^p(K)$ with $p\geq 1$ and $K\in\T_h$. Then, we have
\begin{align}\label{Lp-stab}
\|\Pi_hv\|_{L^p(K)}\leq C\|v\|_{L^p(K)},
\end{align}
where $C$ is independent of the diameter $h_K$ of $K$.
\end{lemma}
\begin{lemma}\cite[Theorem 5.8]{adams}
For any $v\in H^1(\Omega)$, there holds
\begin{align}\label{sobolev-ineq}
\| v\|_{L^4(\Omega)}\leq C\| v\|_{L^2(\Omega)}^{1/2}\|v\|_{H^1(\Omega)}^{1/2}.
\end{align}
\end{lemma}
}
\begin{lemma}\label{lem:l7}
Assume $u\in H^{k+1}(\Omega)\cap W_{\infty}^1(\Omega)$ and $\varphi\in H^{2}(\Omega)$. For any $\xi\in \mathbb{B}_{h}(\mathcal{Q}_hu)$, there exists a constant $C>0$, independent of $h$, such that
\begin{align*}
|R_h(u;\xi, \mathcal{Q}_h\varphi)|\leq Ch^{2k}|u|_{H^{k+1}(\Omega)}^2\|\varphi\|_{H^2(\Omega)}.
\end{align*}
\end{lemma}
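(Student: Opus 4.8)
The plan is to estimate $R_h(u;\xi,\mathcal{Q}_h\varphi)$ directly from its definition in \eqref{eq:Rh}, proceeding along the same lines as the proof of Lemma \ref{lem:l3}, but now exploiting the extra regularity of the test function $\mathcal{Q}_h\varphi$ to gain one additional power of $h$. Recall
\begin{align*}
R_h(u;\xi,\mathcal{Q}_h\varphi)&=((u-\xi_0)\widetilde{a}_u(\xi_0,u)(\nabla u-\nabla_w\xi),\nabla_w(\mathcal{Q}_h\varphi))_{\T_h}\\
&\quad+((u-\xi_0)^2\widetilde{a}_{uu}(\xi_0,u)\nabla u,\nabla_w(\mathcal{Q}_h\varphi))_{\T_h}.
\end{align*}
The crucial observation is that $\nabla_w(\mathcal{Q}_h\varphi)=\Pi_h(\nabla\varphi)$ by Lemma \ref{lem:com}, so $\|\nabla_w(\mathcal{Q}_h\varphi)\|_{L^2(\T_h)}\leq\|\Pi_h(\nabla\varphi)\|_{L^2(\T_h)}\leq C\|\varphi\|_{H^1(\Omega)}$, and this $L^2$-bound on the weak gradient of the test function is what replaces the inverse-inequality-driven $L^4$ bound and the factor $h^{-1/2}$ used in Lemma \ref{lem:l3}.

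First I would treat the second term. By H\"older's inequality with exponents $(4,4,2)$ (or $(2,\infty,2)$ as in \eqref{eq:h2}), $\|a_{uu}\|_{L^\infty}\leq M_a$, and $\|\nabla u\|_{L^\infty(\Omega)}<\infty$,
\[
|((u-\xi_0)^2\widetilde{a}_{uu}(\xi_0,u)\nabla u,\nabla_w(\mathcal{Q}_h\varphi))_{\T_h}|\leq C\|u-\xi_0\|_{L^4(\T_h)}^2\|\nabla_w(\mathcal{Q}_h\varphi)\|_{L^2(\T_h)}.
\]
Then \eqref{eq:h1-1} gives $\|u-\xi_0\|_{L^4(\T_h)}\leq C(h^k|u|_{H^{k+1}(\Omega)}+\3bar\mathcal{Q}_hu-\xi\3bar)$, and since $\xi\in\mathbb{B}_h(\mathcal{Q}_hu)$ means $\3bar\mathcal{Q}_hu-\xi\3bar\leq Ch^k|u|_{H^{k+1}(\Omega)}$, we get $\|u-\xi_0\|_{L^4(\T_h)}^2\leq Ch^{2k}|u|_{H^{k+1}(\Omega)}^2$. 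Combined with $\|\nabla_w(\mathcal{Q}_h\varphi)\|_{L^2(\T_h)}\leq C\|\varphi\|_{H^1(\Omega)}\leq C\|\varphi\|_{H^2(\Omega)}$, the second term is bounded by $Ch^{2k}|u|_{H^{k+1}(\Omega)}^2\|\varphi\|_{H^2(\Omega)}$, as required.

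For the first term, I would use H\"older with exponents $(4,4,2)$: $\|u-\xi_0\|_{L^4(\T_h)}\,\|\nabla u-\nabla_w\xi\|_{L^4(\T_h)}\,\|\nabla_w(\mathcal{Q}_h\varphi)\|_{L^2(\T_h)}$ up to the constant $M_a$. Here the first factor is $O(h^k|u|_{H^{k+1}(\Omega)})$ by \eqref{eq:h1-1} and the ball constraint, and the third is $O(\|\varphi\|_{H^2(\Omega)})$. The remaining factor $\|\nabla u-\nabla_w\xi\|_{L^4(\T_h)}$ is estimated by splitting $\nabla u-\nabla_w\xi=(\nabla u-\Pi_h(\nabla u))+\nabla_w(\mathcal{Q}_hu-\xi)$; the first piece is controlled in $L^4$ by \eqref{Pih-app} (with $q=4$, $p=2$, $r=0$, giving an $h^{k-1/2}$ factor), while the second is a polynomial to which the inverse inequality \eqref{inv} applies, yielding $\|\nabla_w(\mathcal{Q}_hu-\xi)\|_{L^4(\T_h)}\leq Ch^{-1/2}\|\nabla_w(\mathcal{Q}_hu-\xi)\|_{L^2(\T_h)}\leq Ch^{-1/2}\3bar\mathcal{Q}_hu-\xi\3bar\leq Ch^{k-1/2}|u|_{H^{k+1}(\Omega)}$. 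Hence $\|\nabla u-\nabla_w\xi\|_{L^4(\T_h)}\leq Ch^{k-1/2}|u|_{H^{k+1}(\Omega)}$, and the first term is bounded by $C\cdot h^k|u|_{H^{k+1}(\Omega)}\cdot h^{k-1/2}|u|_{H^{k+1}(\Omega)}\cdot\|\varphi\|_{H^2(\Omega)}=Ch^{2k-1/2}|u|_{H^{k+1}(\Omega)}^2\|\varphi\|_{H^2(\Omega)}$, which for $h\leq 1$ is absorbed into $Ch^{2k}|u|_{H^{k+1}(\Omega)}^2\|\varphi\|_{H^2(\Omega)}$. Adding the two bounds finishes the proof.

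The main obstacle — really the only subtlety — is making sure the first term genuinely achieves the order $h^{2k}$ rather than $h^{2k-1}$: naively bounding $\|\nabla_w(\mathcal{Q}_h\varphi)\|$ in $L^4$ via the inverse inequality would cost an extra $h^{-1/2}$, so one must keep the test function in $L^2$ and instead pay the $h^{-1/2}$ on the polynomial part of $\nabla u-\nabla_w\xi$, where it is compensated by the $h^k$ (respectively $h^{k-1/2}$ with room to spare from \eqref{Pih-app}) coming from the approximation/ball estimates. Once the exponent bookkeeping is arranged this way, everything else is a routine application of H\"older's inequality together with Lemmas \ref{lem:happy}, \ref{lem:P-wg}, \ref{lem:com}, the approximation estimates \eqref{Q0-app}--\eqref{Pih-app}, and the inverse inequality \eqref{inv}.
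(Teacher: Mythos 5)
Your treatment of the second term is correct and matches the paper. The problem is in the first term, and specifically in the last step of your exponent bookkeeping: you arrive at the bound $Ch^{2k-1/2}|u|_{H^{k+1}(\Omega)}^2\|\varphi\|_{H^2(\Omega)}$ and assert that for $h\le 1$ this ``is absorbed into'' $Ch^{2k}|u|_{H^{k+1}(\Omega)}^2\|\varphi\|_{H^2(\Omega)}$. The inequality goes the wrong way: for $h<1$ one has $h^{2k-1/2}>h^{2k}$, so what you prove is strictly weaker than the statement of the lemma. The root cause is your H\"older split $(4,4,2)$ with the $L^4$ norm placed on $\nabla u-\nabla_w\xi$: the polynomial part $\nabla_w(\mathcal{Q}_hu-\xi)$ of that factor has to be converted from $L^2$ to $L^4$ by the inverse inequality (\ref{inv}), which necessarily costs $h^{-1/2}$, and nothing in your argument recovers that half power.

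The paper avoids this by putting the $L^4$ norm on the test-function factor instead, bounding the first term by $M_a\|u-\xi_0\|_{L^4(\T_h)}\|\nabla u-\nabla_w\xi\|_{L^2(\T_h)}\|\Pi_h(\nabla\varphi)\|_{L^4(\T_h)}$. The point --- and this is precisely the ``extra regularity of the test function'' you announce in your opening paragraph but then do not exploit --- is that $\|\Pi_h(\nabla\varphi)\|_{L^4(\T_h)}\le C\|\varphi\|_{H^2(\Omega)}$ with \emph{no} negative power of $h$: in two dimensions $\nabla\varphi\in [H^1(\Omega)]^2\hookrightarrow [L^4(\Omega)]^2$, and the local $L^2$ projection $\Pi_h$ is $L^4$-stable (combine $\|\Pi_h\bm{\sigma}\|_{L^4(K)}\le Ch_K^{-1/2}\|\Pi_h\bm{\sigma}\|_{L^2(K)}$ from (\ref{inv}) with $\|\bm{\sigma}\|_{L^2(K)}\le Ch_K^{1/2}\|\bm{\sigma}\|_{L^4(K)}$). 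The two remaining factors are each of order $h^k|u|_{H^{k+1}(\Omega)}$ by (\ref{eq:h1-1}), (\ref{eq:h2-2}) and the ball constraint, which yields the full $h^{2k}$. So the fix is simply to swap which factor carries the $L^4$ norm; as written, your argument only establishes the rate $h^{2k-1/2}$ (which, incidentally, would still suffice for the fixed-point argument in Theorem \ref{thm:thm1} but would degrade the $L^2$ estimate (\ref{eq:thm1-2})).
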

\begin{proof}
By Lemma \ref{lem:com}, $\nabla_w(\mathcal{Q}_h\varphi)=\Pi_h(\nabla \varphi)$.
Then from (\ref{eq:Rh}), we know
\begin{align*}
R_h(u;\xi, \mathcal{Q}_h\varphi)&=((u-\xi_0)\widetilde{a}_u(\xi_0,u)(\nabla u-\nabla_w \xi), \Pi_h(\nabla\varphi))_{\T_h}\nonumber\\
&\quad +((u-\xi_0)^2\widetilde{a}_{uu}(\xi_0,u)\nabla u, \Pi_h(\nabla\varphi))_{\T_h}.
\end{align*}
By H\"{o}lder's inequality and the inverse inequality (\ref{inv}), we can deduce that
\begin{align}\label{eq:lem7-eq1}
&|((u-\xi_0)\widetilde{a}_u(\xi_0,u)(\nabla u-\nabla_w \xi), \Pi_h(\nabla\varphi))_{\T_h}|\nonumber\\[2pt]
&\leq M_a\|u-\xi_0\|_{L^4(\T_h)}\|\nabla u-\nabla_w \xi\|_{L^2(\T_h)}\|\Pi_h(\nabla\varphi)\|_{L^4(\T_h)},
\end{align}
and
\begin{align}\label{eq:lem7-eq2}
&|((u-\xi_0)^2\widetilde{a}_{uu}(\xi_0,u)\nabla u, \Pi_h(\nabla\varphi))_{\T_h}|\nonumber\\
&\leq M_a\|(u-\xi_0)^2\|_{L^2(\T_h)}\|\nabla u\|_{L^{\infty}(\Omega)}\|\Pi_h(\nabla\varphi)\|_{L^2(\T_h)}\nonumber\\
&\leq C\|u-\xi_0\|_{L^4(\T_h)}^2\|\Pi_h(\nabla\varphi)\|_{L^2(\T_h)}.
\end{align}
{
In virtue of (\ref{Lp-stab}) and (\ref{sobolev-ineq}), we obtain
\begin{align}\label{eq:0}
\|\Pi_h(\nabla\varphi)\|_{L^4(\T_h)} \leq C\|\nabla\varphi\|_{L^4(\Omega)}
\leq C\|\nabla \varphi\|_{L^2(\Omega)}^{1/2}\|\nabla \varphi\|_{H^1(\Omega)}^{1/2}
\leq C\|\varphi\|_{H^2(\Omega)}.
\end{align}
}

From the triangle inequality and (\ref{Pih-app}), it follows that
\begin{align}\label{eq:lem7-eq3}
\|\Pi_h(\nabla \varphi)\|_{L^2(\T_h)}
&\leq \|\Pi_h(\nabla \varphi)-\nabla \varphi\|_{L^2(\T_h)}+\|\nabla \varphi\|_{L^2(\T_h)}\nonumber\\
&\leq Ch|\varphi|_{H^2(\Omega)}+|\varphi|_{H^1(\Omega)}\nonumber\\
&\leq C\|\varphi\|_{H^2(\Omega)}.
\end{align}

Plugging (\ref{eq:h1-1}), (\ref{eq:h2-2}), (\ref{eq:0})  and (\ref{eq:lem7-eq3}) back into (\ref{eq:lem7-eq1}) and (\ref{eq:lem7-eq2}), respectively, completes the proof.
\end{proof}

\begin{lemma}\label{lem:l5}
Assume $u\in H^{k+1}(\Omega)\cap W_{\infty}^{1}(\Omega)$. For any $\xi\in \mathbb{B}_{h}(\mathcal{Q}_hu)$, let $\psi=\mathcal{F}_h(\xi)$ and denote by $\zeta_h=\mathcal{Q}_hu-\psi$. Then there exists a constant $C>0$, independent of $h$, such that
\begin{equation}\label{eq:zeta-l2}
\|\zeta_0\|_{L^2(\T_h)}
\leq C[h\3bar\zeta_h\3bar+h^{k+1}|u|_{H^{k+1}(\Omega)}+ h^{2k}|u|_{H^{k+1}(\Omega)}^2],
\end{equation}
for sufficiently small $h$.
\end{lemma}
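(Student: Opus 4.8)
The plan is to run a standard Aubin--Nitsche duality argument. Starting from the dual problem (\ref{dual}) with right-hand side $\zeta_0$, we test with $\zeta_0$ itself to get $\|\zeta_0\|_{L^2(\T_h)}^2 = (\zeta_0, -\nabla\cdot(a(u)\nabla\varphi) + a_u(u)\nabla u\cdot\nabla\varphi)$. The first step is to rewrite the right-hand side in terms of the weak-Galerkin bilinear form $D_h(u;\zeta_h,\mathcal{Q}_h\varphi)$. This requires integration by parts element-by-element, inserting the projections $\Pi_h$, $\mathcal{Q}_0$, $\mathcal{Q}_b$, and using the commutativity property $\nabla_w(\mathcal{Q}_h\varphi) = \Pi_h(\nabla\varphi)$ from Lemma \ref{lem:com} together with the definition of the weak gradient. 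The outcome I expect is an identity of the form
\begin{align*}
\|\zeta_0\|_{L^2(\T_h)}^2 = D_h(u;\zeta_h,\mathcal{Q}_h\varphi) + (\text{consistency terms in } \varphi),
\end{align*}
where the consistency terms are of exactly the same type as the $\ell_i(\varphi,\cdot)$ and $s_h(\mathcal{Q}_h\varphi,\cdot)$ terms already analyzed, only now with the roles of $u$ and $\varphi$ partly interchanged; by the regularity of the $L^2$-projections these are bounded by $Ch\3bar\zeta_h\3bar\|\varphi\|_{H^2(\Omega)}$ (one power of $h$ gained from $\varphi$ being only $H^2$, against $\3bar\zeta_h\3bar$ which already contains the full WG norm of $\zeta_h$).

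The second step is to invoke the defining relation of the fixed point map. Since $\psi=\mathcal{F}_h(\xi)$, taking $v_h=\mathcal{Q}_h\varphi$ (after subtracting the boundary contribution, $\mathcal{Q}_h\varphi\in V_h^0$ because $\varphi|_{\partial\Omega}=0$) in (\ref{eq:f-map}) gives
\begin{align*}
D_h(u;\zeta_h,\mathcal{Q}_h\varphi) = E_h(u,\mathcal{Q}_h\varphi) + R_h(u;\xi,\mathcal{Q}_h\varphi).
\end{align*}
Now Lemma \ref{lem:l6} bounds $|E_h(u,\mathcal{Q}_h\varphi)|$ by $Ch^{k+1}|u|_{H^{k+1}(\Omega)}\|\varphi\|_{H^2(\Omega)}$, and Lemma \ref{lem:l7} bounds $|R_h(u;\xi,\mathcal{Q}_h\varphi)|$ by $Ch^{2k}|u|_{H^{k+1}(\Omega)}^2\|\varphi\|_{H^2(\Omega)}$. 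Combining these with the consistency estimate from the first step, we obtain
\begin{align*}
\|\zeta_0\|_{L^2(\T_h)}^2 \leq C\big(h\3bar\zeta_h\3bar + h^{k+1}|u|_{H^{k+1}(\Omega)} + h^{2k}|u|_{H^{k+1}(\Omega)}^2\big)\|\varphi\|_{H^2(\Omega)}.
\end{align*}
Finally, applying the $H^2$-regularity bound (\ref{reg}), $\|\varphi\|_{H^2(\Omega)}\leq C\|\zeta_0\|_{L^2(\T_h)}$, and cancelling one factor of $\|\zeta_0\|_{L^2(\T_h)}$ yields the claimed estimate (\ref{eq:zeta-l2}).

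The main obstacle, and the only genuinely technical point, is the first step: carefully converting $(\zeta_0, -\nabla\cdot(a(u)\nabla\varphi) + a_u(u)\nabla u\cdot\nabla\varphi)$ into $D_h(u;\zeta_h,\mathcal{Q}_h\varphi)$ plus controllable remainders. The subtlety is that $\zeta_h=\{\zeta_0,\zeta_b\}$ is a genuine weak function whose two components are independent, so one must introduce $\zeta_b$ through the $\sum_K\langle a(u)\nabla\varphi\cdot\bm n, \zeta_b\rangle_{\partial K}=0$ identity (valid because $\varphi\in H^1(\Omega)$ and $\zeta_b$ is single-valued, vanishing on $\partial\Omega$), then replace $a(u)\nabla\varphi$ by $\Pi_h(a(u)\nabla\varphi)$ and regroup the resulting edge terms into $s_h(\cdot,\cdot)$-type contributions plus the consistency error, exactly mirroring the derivation of the error equation in Section 3.2 but read "in the $\varphi$ variable." One also has to handle the lower-order convection term $a_u(u)\nabla u\cdot\nabla\varphi$ tested against $\zeta_0$, matching it against the $(a_u(u)\nabla u\,\zeta_0,\nabla_w\mathcal{Q}_h\varphi)_{\T_h}$ piece of $D_h$; the discrepancy is $(a_u(u)\nabla u\,\zeta_0, \nabla\varphi - \Pi_h(\nabla\varphi))_{\T_h}$, which by (\ref{Pih-app}) and Lemma \ref{lem:P-wg} is again $O(h)\3bar\zeta_h\3bar\|\varphi\|_{H^2(\Omega)}$. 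Once this bookkeeping is done the rest is a direct assembly of lemmas already in hand.
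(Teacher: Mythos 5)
Your proposal follows essentially the same route as the paper: test the dual problem (\ref{dual}) with $\zeta_0$, convert the result into $D_h(u;\zeta_h,\mathcal{Q}_h\varphi)$ plus consistency remainders (the paper's $-s_h(\zeta_h,\mathcal{Q}_h\varphi)+J_1+J_2+J_3$, each bounded by $Ch\3bar\zeta_h\3bar\,|\varphi|_{H^2(\Omega)}$ or $Ch\|\zeta_0\|_{L^2(\T_h)}|\varphi|_{H^2(\Omega)}$), invoke the fixed-point identity with $v_h=\mathcal{Q}_h\varphi$ together with Lemmas \ref{lem:l6} and \ref{lem:l7}, and close with the regularity bound (\ref{reg}). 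The only cosmetic difference is that you bound the lower-order discrepancy term via Lemma \ref{lem:P-wg} by $Ch\3bar\zeta_h\3bar$, whereas the paper keeps it as $Ch\|\zeta_0\|_{L^2(\T_h)}$ and absorbs it for small $h$; both are valid.
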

\begin{proof}
Testing the first equation of the dual problem (\ref{dual}) by $\zeta_0$ yields
\begin{align*}
\|\zeta_0\|_{L^2(\Omega)}^2
=-(\nabla\cdot(a(u)\nabla\varphi), \zeta_0)_{\T_h}+(\zeta_0a_u(u)\nabla u, \nabla\varphi)_{\T_h}.
\end{align*}
Then using a similar procedure as in the proof of the equation (\ref{eq:m4}), we obtain
\begin{align*}
&-(\nabla\cdot(a(u)\nabla\varphi), \zeta_0)_{\T_h}\nonumber\\
&=(a(u)\nabla_w\zeta_h, \nabla\varphi)_{\T_h}
 +\langle \zeta_0-\zeta_b, (\Pi_h(a(u)\nabla\varphi)-a(u)\nabla\varphi)\cdot\bm{n}\rangle_{\partial\T_h}.
\end{align*}
Therefore,
\begin{align*}
\|\zeta_0\|_{L^2(\Omega)}^2
&=(a(u)\nabla_w\zeta_h, \nabla\varphi)_{\T_h}
+(\zeta_0a_u(u)\nabla u, \nabla\varphi)_{\T_h}\\
&\quad +\langle \zeta_0-\zeta_b, (\Pi_h(a(u)\nabla\varphi)-a(u)\nabla\varphi)\cdot\bm{n}\rangle_{\partial\T_h}.
\end{align*}
With the following notations
\begin{align}
\label{eq:J1}
J_1 &= \langle \zeta_0-\zeta_b, (\Pi_h(a(u)\nabla\varphi)-a(u)\nabla\varphi)\cdot\bm{n}\rangle_{\partial\T_h},\\[2pt]
\label{eq:J2}
J_2 &= (a(u)(\nabla\varphi-\Pi_h(\nabla\varphi)), \nabla_w\zeta_h)_{\T_h}
,\\[2pt]
\label{eq:J3}
J_3 &= (a_u(u)\nabla u\zeta_0, \nabla\varphi-\Pi_h(\nabla\varphi))_{\T_h},
\end{align}
we could rewrite the term $\|\zeta_0\|_{L^2(\Omega)}^2$ as follows:
\begin{align}\label{eq:lem4-eq1}
\|\zeta_0\|_{L^2(\Omega)}^2
&=(a(u)\nabla_w\zeta_h, \nabla_w(\mathcal{Q}_h\varphi))_{\T_h}
+(\zeta_0a_u(u)\nabla u, \nabla_w(\mathcal{Q}_h\varphi))_{\T_h}\nonumber\\
&\quad +J_1+J_2+J_3\nonumber\\
&=D_h(u;\zeta_h, \mathcal{Q}_h\varphi)-s_h(\zeta_h, \mathcal{Q}_h\varphi)
+J_1+J_2+J_3.
\end{align}
Taking $v=\mathcal{Q}_h\varphi$ in (\ref{eq:f-map}) yields
\begin{align*}
D_h(u;\zeta_h, \mathcal{Q}_h\varphi)
=E_h(u,\mathcal{Q}_h\varphi)
+R_h(u;\xi,\mathcal{Q}_h\varphi),
\end{align*}
which together with (\ref{eq:lem4-eq1}) leads to
\begin{align}\label{eq:lem4-eq2}
\|\zeta_0\|_{L^2(\Omega)}^2
&=E_h(u,\mathcal{Q}_h\varphi)
+R_h(u;\xi,\mathcal{Q}_h\varphi)\nonumber\\
&\quad -s_h(\zeta_h, \mathcal{Q}_h\varphi)
+J_1+J_2+J_3.
\end{align}

By Cauchy-Schwarz inequality and (\ref{eq:sh}), we get
\begin{align}\label{eq:sh-est}
|s_h(\zeta_h, \mathcal{Q}_h\varphi)|
\leq s_h^{1/2}(\zeta_h, \zeta_h)s_h^{1/2}(\mathcal{Q}_h\varphi, \mathcal{Q}_h\varphi)
\leq Ch\3bar\zeta_h\3bar|\varphi|_{H^2(\Omega)}.
\end{align}

It follows from Cauchy-Schwarz inequality and (\ref{Pih-app}) that
\begin{align}\label{eq:J1-est}
|J_1|&\leq (\sum_{K\in\T_h}h_K^{-1}\|\zeta_0-\zeta_b\|_{L^2(\partial K)}^2)^{1/2}\nonumber\\
&\quad \times (\sum_{K\in\T_h}h_K\|\Pi_h(a(u)\nabla\varphi)-a(u)\nabla\varphi\|_{L^2(\partial K)}^2)^{1/2}\nonumber\\
&\leq Ch\3bar \zeta_h\3bar |\varphi|_{H^2(\Omega)},
\end{align}
and
\begin{align}\label{eq:J2-est}
|J_2|&\leq M_a\|\nabla\varphi-\Pi_h(\nabla\varphi)\|_{L^2(\T_h)}\|\nabla_w\zeta_h\|_{L^2(\T_h)}\nonumber\\
&\leq Ch\3bar \zeta_h\3bar |\varphi|_{H^2(\Omega)}.
\end{align}

From H\"{o}lder's inequality and (\ref{Pih-app}), it follows that
\begin{align}\label{eq:J3-est}
|J_3|&\leq M_a\|\nabla\varphi-\Pi_h(\nabla\varphi)\|_{L^2(\T_h)}\|\nabla u\|_{L^{\infty}(\Omega)}\|\zeta_0\|_{L^2(\Omega)}\nonumber\\
&\leq Ch\|\zeta_0\|_{L^2(\Omega)}|\varphi|_{H^2(\Omega)}.
\end{align}

Finally, appealing to Lemma \ref{lem:l6} and \ref{lem:l7}, and the estimates of (\ref{eq:sh-est}), (\ref{eq:J1-est}), (\ref{eq:J2-est}) and (\ref{eq:J3-est}), we have
\begin{align*}
\|\zeta_0\|_{L^2(\T_h)}^2
&\leq C[h^{k+1}|u|_{H^{k+1}(\Omega)}+h(\3bar\zeta_h\3bar+\|\zeta_0\|_{L^2(\T_h)}) \\
&\qquad + h^{2k}|u|_{H^{k+1}(\Omega)}^2]\|\varphi\|_{H^2(\Omega)},
\end{align*}
which together with (\ref{reg}) leads to
\begin{align*}
\|\zeta_0\|_{L^2(\T_h)}
&\leq C[h^{k+1}|u|_{H^{k+1}(\Omega)}+h(\3bar\zeta_h\3bar+\|\zeta_0\|_{L^2(\T_h)}) + h^{2k}|u|_{H^{k+1}(\Omega)}^2],
\end{align*}
which implies (\ref{eq:zeta-l2}) for sufficiently small $h$. The proof is completed.
\end{proof}

\begin{theorem}\label{thm:thm1}
Assume $u\in H^{k+1}(\Omega)\cap W_{\infty}^1(\Omega)$. For any $\xi\in \mathbb{B}_{h}(\mathcal{Q}_hu)$, let $\psi=\mathcal{F}_h(\xi)$ and denote by $\zeta_h=\mathcal{Q}_hu-\psi$. Then there exists a constant $C>0$, independent of $h$, such that
\begin{equation}\label{eq:thm1-1}
\3bar\zeta_h\3bar
\leq Ch^{k}|u|_{H^{k+1}(\Omega)},
\end{equation}
and
\begin{equation}\label{eq:thm1-2}
\|\zeta_0\|_{L^2(\T_h)}
\leq C(h^{k+1}+h^{2k}|u|_{H^{k+1}(\Omega)})|u|_{H^{k+1}(\Omega)},
\end{equation}
for sufficiently small $h$.
\end{theorem}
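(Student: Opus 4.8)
The proof simply couples the two a priori bounds for $\zeta_h$ established above: the energy estimate of Lemma~\ref{lem:l4} and the duality estimate of Lemma~\ref{lem:l5}. Recall that Lemma~\ref{lem:l4} gives
\[
\gamma\3bar\zeta_h\3bar \leq \beta\|\zeta_0\|_{L^2(\T_h)}+Ch^k|u|_{H^{k+1}(\Omega)},
\]
while Lemma~\ref{lem:l5} gives
\[
\|\zeta_0\|_{L^2(\T_h)} \leq C\big(h\3bar\zeta_h\3bar+h^{k+1}|u|_{H^{k+1}(\Omega)}+ h^{2k}|u|_{H^{k+1}(\Omega)}^2\big).
\]
Inserting the second bound into the first yields
\[
\gamma\3bar\zeta_h\3bar \leq C\beta h\3bar\zeta_h\3bar + C\big(h^{k+1}+h^{2k}|u|_{H^{k+1}(\Omega)}\big)|u|_{H^{k+1}(\Omega)} + Ch^k|u|_{H^{k+1}(\Omega)}.
\]

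Next I would choose $h$ small enough that $C\beta h\le \gamma/2$, so that the term $C\beta h\3bar\zeta_h\3bar$ can be absorbed into the left-hand side. Since $h^{k+1}$ and $h^{2k}|u|_{H^{k+1}(\Omega)}$ are both bounded by a constant times $h^k$ for $h$ small, the right-hand side is then dominated by $Ch^k|u|_{H^{k+1}(\Omega)}$, which gives (\ref{eq:thm1-1}). Feeding this energy bound back into the estimate of Lemma~\ref{lem:l5}, the term $h\3bar\zeta_h\3bar$ becomes $\le Ch^{k+1}|u|_{H^{k+1}(\Omega)}$, and collecting terms produces
\[
\|\zeta_0\|_{L^2(\T_h)} \leq C\big(h^{k+1}+h^{2k}|u|_{H^{k+1}(\Omega)}\big)|u|_{H^{k+1}(\Omega)},
\]
i.e.\ (\ref{eq:thm1-2}).

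There is no real obstacle in this argument — it is a standard absorption. The only care needed is bookkeeping of the smallness threshold on $h$: one must verify that a single $h_0>0$ works simultaneously for the ``sufficiently small $h$'' hypotheses of Lemmas~\ref{lem:l4} and~\ref{lem:l5} and for the present absorption $C\beta h\le\gamma/2$, which is immediate since each is a condition of the form $h\le h_0$. I would also note that the constant $C$ in (\ref{eq:thm1-1}) is explicit in terms of $\gamma$, $\beta$ and the constants of the preceding lemmas, hence can be taken as the constant $C$ in the definition of the ball $\mathbb{B}_h(\mathcal{Q}_hu)$; this is precisely what makes $\mathcal{F}_h$ map $\mathbb{B}_h(\mathcal{Q}_hu)$ into itself, setting the stage for the Brouwer fixed point argument in the next subsection.
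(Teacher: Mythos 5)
Your proof is correct and takes essentially the same route as the paper: substitute the duality bound of Lemma~\ref{lem:l5} into the energy bound of Lemma~\ref{lem:l4}, absorb the $h\3bar\zeta_h\3bar$ term for sufficiently small $h$ to obtain (\ref{eq:thm1-1}), and then feed that estimate back into Lemma~\ref{lem:l5} to get (\ref{eq:thm1-2}). Your additional remarks on tracking a single smallness threshold $h_0$ and on the role of the explicit constant in the definition of $\mathbb{B}_h(\mathcal{Q}_hu)$ are accurate and consistent with how the paper uses this theorem in Theorem~\ref{thm:thm2}.
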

\begin{proof}
Plugging (\ref{eq:zeta-l2}) of Lemma \ref{lem:l5} back into (\ref{eq:zeta-3bar}) of Lemma \ref{lem:l4} yields
\begin{equation*}
\3bar\zeta_h\3bar
\leq C[h\3bar\zeta_h\3bar+h^{k}|u|_{H^{k+1}(\Omega)}
+h^{2k}|u|_{H^{k+1}(\Omega)}^2],
\end{equation*}
which implies
\begin{equation*}
\3bar\zeta_h\3bar
\leq Ch^{k}|u|_{H^{k+1}(\Omega)}(1+h^{k}|u|_{H^{k+1}(\Omega)})
\leq Ch^{k}|u|_{H^{k+1}(\Omega)},
\end{equation*}
for sufficiently small $h$.

Combining (\ref{eq:thm1-1}) with (\ref{eq:zeta-l2}) of Lemma \ref{lem:l5} leads to the $L^2$ norm estimate (\ref{eq:thm1-2}).
The proof is completed.
\end{proof}

According to (\ref{eq:thm1-1}) of Theorem \ref{thm:thm1}, we can easily conclude the following statements.
\begin{theorem}\label{thm:thm2}
For sufficiently small $h$, the map $\mathcal{F}_h$ maps $\mathbb{B}_{h}(\mathcal{Q}_hu)$ into itself.
\end{theorem}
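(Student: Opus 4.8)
The plan is to obtain this self-mapping property as an essentially immediate consequence of the a priori bound (\ref{eq:thm1-1}) already proved in Theorem~\ref{thm:thm1}; the only point needing care is the matching of constants. Recall that $\mathbb{B}_{h}(\mathcal{Q}_hu)$ is defined with an as-yet-unspecified ``general positive constant'' $C$ in its radius $Ch^{k}|u|_{H^{k+1}(\Omega)}$. On the other hand, tracing the constants through Lemmas~\ref{lem:l3}--\ref{lem:l5} and the proof of Theorem~\ref{thm:thm1} shows that the constant $C_\ast$ in (\ref{eq:thm1-1}) depends only on $\Omega$, the shape-regularity of $\T_h$, the degree $k$, the bounds $\alpha_0,M_a$, the Poincar\'{e} constant $C_P$, and the elliptic-regularity constant in (\ref{reg}); in particular it is independent of $h$ and of $\xi$, and it does not depend on the radius constant of the ball (the radius enters the intermediate estimates for $R_h$ in Lemmas~\ref{lem:l3} and \ref{lem:l7} only through factors carrying an extra positive power of $h$, which are absorbed once $h$ is small). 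The first step, then, is to \emph{fix} the constant in the definition of $\mathbb{B}_{h}(\mathcal{Q}_hu)$ to be this $C_\ast$ (equivalently, any number $\ge C_\ast$); this removes the apparent circularity between the definition of the target ball and the estimate that is supposed to land in it.

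With that convention in place I would argue as follows. Let $\xi\in\mathbb{B}_{h}(\mathcal{Q}_hu)$ be arbitrary. By the remark following (\ref{eq:f-map}), the function $\psi=\mathcal{F}_h(\xi)\in V_h$ is well defined (it is the unique WG solution of a linear problem, obtained from G{\aa}rding's inequality (\ref{eq:garding}) and the argument of \cite{wy}), and $\zeta_h:=\mathcal{Q}_hu-\psi$ lies in $V_h^0$ since $\psi$ and $\mathcal{Q}_hu$ carry the same boundary datum $\mathcal{Q}_bg$. Applying (\ref{eq:thm1-1}) of Theorem~\ref{thm:thm1} to this $\xi$ gives, for all $h$ below a fixed threshold $h_0$, the bound $\3bar\mathcal{Q}_hu-\psi\3bar=\3bar\zeta_h\3bar\le C_\ast h^{k}|u|_{H^{k+1}(\Omega)}$, which is exactly the membership condition $\psi\in\mathbb{B}_{h}(\mathcal{Q}_hu)$. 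As $\xi$ was arbitrary, this yields $\mathcal{F}_h\big(\mathbb{B}_{h}(\mathcal{Q}_hu)\big)\subseteq\mathbb{B}_{h}(\mathcal{Q}_hu)$ for every $h\le h_0$, which is the claim.

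I do not expect a genuine obstacle here: all of the analytic work has been done in Lemmas~\ref{lem:l3}--\ref{lem:l5} and packaged into Theorem~\ref{thm:thm1}, so the statement is a corollary. The only things to be careful about are (i) the order of quantifiers just discussed — the radius constant of $\mathbb{B}_{h}(\mathcal{Q}_hu)$ must be chosen after, or simultaneously with, the constant delivered by Theorem~\ref{thm:thm1}, not before — and (ii) the bookkeeping that ``sufficiently small $h$'' here means $h\le h_0$, where $h_0$ is the minimum of the finitely many smallness thresholds appearing in Lemmas~\ref{lem:l4}--\ref{lem:l5} and in the proof of Theorem~\ref{thm:thm1} (this $h_0$ may depend on $|u|_{H^{k+1}(\Omega)}$ and on the now-fixed radius constant, but not otherwise on the mesh). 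Once this self-mapping property is in hand, the natural continuation — presumably immediate in the paper — is to note that $\mathbb{B}_{h}(\mathcal{Q}_hu)$ is a nonempty, closed, bounded, convex subset of the finite-dimensional space $V_h$ on which $\mathcal{F}_h$ is continuous, so Brouwer's fixed point theorem furnishes a fixed point $u_h=\mathcal{F}_h(u_h)$; by the equivalence of (\ref{eq:f-map}) (with $\xi=u_h$) and the error equation (\ref{eq:dh}), this $u_h$ solves the WG scheme (\ref{wg}) and inherits the estimates (\ref{eq:thm1-1})--(\ref{eq:thm1-2}).
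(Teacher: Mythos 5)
Your proof is correct and takes essentially the same route as the paper: apply the a priori bound (\ref{eq:thm1-1}) of Theorem~\ref{thm:thm1} to an arbitrary $\xi\in\mathbb{B}_{h}(\mathcal{Q}_hu)$ and read off that $\mathcal{F}_h(\xi)$ lies in the ball. Your extra care about fixing the radius constant of $\mathbb{B}_{h}(\mathcal{Q}_hu)$ to match the constant delivered by Theorem~\ref{thm:thm1} addresses a point the paper leaves implicit, but it does not change the argument.
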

\begin{proof}
For any $\xi\in \mathbb{B}_h(\mathcal{Q}_hu)$, from (\ref{eq:thm1-1}) of Theorem \ref{thm:thm1} it follows that
\begin{equation*}
\3bar\mathcal{Q}_hu-\mathcal{F}_h(\xi)\3bar
\leq Ch^{k}|u|_{H^{k+1}(\Omega)},
\end{equation*}
which implies $\mathcal{F}_h(\xi)\in \mathbb{B}_h(\mathcal{Q}_hu)$. Therefore, $\mathcal{F}_h(\mathbb{B}_h(\mathcal{Q}_hu))\subseteq \mathbb{B}_h(\mathcal{Q}_hu)$. The proof is completed.
\end{proof}

The following theorem shows that the mapping $\mathcal{F}_h$ is a contraction in the norm $\3bar\cdot\3bar$ of the ball $\mathbb{B}_h(\mathcal{Q}_hu)$.
\begin{theorem}\label{thm:thm3}
For any given $\xi, \theta\in\mathbb{B}_h(\mathcal{Q}_hu)$, there holds
\begin{align}
\3bar \mathcal{F}_h(\xi)-\mathcal{F}_h(\theta)\3bar
\leq \rho(h)\3bar \xi-\theta\3bar,
\end{align}
with $\rho(h)\in (0,1)$ for sufficiently small $h$.
\end{theorem}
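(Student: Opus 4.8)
The plan is to repeat, for the \emph{difference} of two solutions of the linearized problem, the argument that already gave Theorem~\ref{thm:thm1}. Write $\psi=\mathcal{F}_h(\xi)$, $\eta=\mathcal{F}_h(\theta)$ and $\chi_h=\{\chi_0,\chi_b\}:=\psi-\eta$. By construction both $\mathcal{Q}_hu-\psi$ and $\mathcal{Q}_hu-\eta$ lie in $V_h^0$, hence $\chi_h\in V_h^0$. Subtracting the two instances of the defining relation~(\ref{eq:f-map}) and using that $E_h(u,\cdot)$ does not depend on its first slot, one obtains the incremental error equation
\[
D_h(u;\chi_h,v_h)=R_h(u;\theta,v_h)-R_h(u;\xi,v_h),\qquad\forall\,v_h\in V_h^0 ,
\]
so that $\chi_h$ solves the linearized WG problem driven by the $R_h$-increment, and everything reduces to showing that this increment is $o(1)\,\3bar\xi-\theta\3bar$.

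The crucial step, and the one I expect to be the main obstacle, is a Lipschitz estimate for $\xi\mapsto R_h(u;\xi,v)$ on $\mathbb{B}_h(\mathcal{Q}_hu)$. I would \emph{not} difference the expression~(\ref{eq:Rh}) directly, since that would force one to control $\widetilde a_{uu}(\xi_0,u)-\widetilde a_{uu}(\theta_0,u)$ and hence to use a bound on $a_{uuu}$, which is not assumed. Instead I would use the equivalent form underlying~(\ref{eq:H}),
\[
R_h(u;\xi,v)=\big((a(\xi_0)-a(u))(\nabla_w\xi-\nabla u),\nabla_wv\big)_{\T_h}+\big((a(\xi_0)-a(u)-(\xi_0-u)a_u(u))\nabla u,\nabla_wv\big)_{\T_h}.
\]
Differencing in $\xi$ and regrouping, $R_h(u;\xi,v)-R_h(u;\theta,v)$ becomes a sum of terms containing the factors $a(\xi_0)-a(\theta_0)$, $a_u(\theta_0)-a_u(u)$, the quadratic Taylor remainder $\widetilde a_{uu}(\xi_0,\theta_0)(\xi_0-\theta_0)^2$, $\nabla_w(\xi-\theta)$ and $\xi_0-\theta_0$; each coefficient difference is pointwise bounded by $M_a|\xi_0-\theta_0|$ or $M_a|\theta_0-u|$ because $a,a_u,a_{uu}$ are bounded, so no third derivative is ever needed. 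Estimating by H\"older with exponents $(4,2,4)$ (or $(4,4,2)$), the inverse inequality~(\ref{inv}) for discrete $L^4$-norms, the Poincar\'e inequality of Lemma~\ref{lem:P-wg} to get $\|\xi_0-\theta_0\|_{L^4(\T_h)}\le C\3bar\xi-\theta\3bar$, the approximation bounds~(\ref{Q0-app})--(\ref{Pih-app}) with Lemma~\ref{lem:com}, and $\xi,\theta\in\mathbb{B}_h(\mathcal{Q}_hu)$ (so that $\|u-\xi_0\|_{L^4(\T_h)}$, $\|\nabla u-\nabla_w\xi\|_{L^2(\T_h)}$ and $\3bar\xi-\theta\3bar$ are all $\le Ch^k|u|_{H^{k+1}(\Omega)}$), I expect
\[
|R_h(u;\xi,v_h)-R_h(u;\theta,v_h)|\le C\,h^{k-1/2}\,|u|_{H^{k+1}(\Omega)}\,\3bar\xi-\theta\3bar\,\|\nabla_wv_h\|_{L^2(\T_h)},\qquad v_h\in V_h^0 ,
\]
which is the analogue of Lemma~\ref{lem:l3} with one factor $h^k|u|_{H^{k+1}}+\3bar\mathcal{Q}_hu-\xi\3bar$ replaced by $\3bar\xi-\theta\3bar$.

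Granting this, the remainder mirrors Lemmas~\ref{lem:l4}--\ref{lem:l5}. Taking $v_h=\chi_h$ in the incremental error equation, applying the G{\aa}rding inequality~(\ref{eq:garding}) with $\phi=u$, and using Young's inequality to absorb $\tfrac{\gamma}{2}\3bar\chi_h\3bar^2$, one gets
\[
\3bar\chi_h\3bar\le C\big(\|\chi_0\|_{L^2(\T_h)}+h^{k-1/2}|u|_{H^{k+1}(\Omega)}\3bar\xi-\theta\3bar\big).
\]
To control $\|\chi_0\|_{L^2(\T_h)}$ I would run the duality argument of Lemma~\ref{lem:l5}: take the dual problem~(\ref{dual}) with data $\chi_0$ (the regularity~(\ref{reg}) applies verbatim), test with $\chi_0$, and reproduce the identity leading to~(\ref{eq:lem4-eq2}) with $\zeta_h$ replaced by $\chi_h$ and with the incremental error equation (at $v_h=\mathcal{Q}_h\varphi\in V_h^0$) used in place of the map relation. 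The terms $s_h(\chi_h,\mathcal{Q}_h\varphi)$, $J_1$, $J_2$, $J_3$ are bounded exactly as in~(\ref{eq:sh-est})--(\ref{eq:J3-est}), while the new term $R_h(u;\theta,\mathcal{Q}_h\varphi)-R_h(u;\xi,\mathcal{Q}_h\varphi)$ is handled by the Lipschitz estimate above together with $\|\nabla_w\mathcal{Q}_h\varphi\|_{L^2(\T_h)}=\|\Pi_h\nabla\varphi\|_{L^2(\T_h)}\le C\|\varphi\|_{H^2(\Omega)}$. Using~(\ref{reg}) and absorbing a term $Ch\|\chi_0\|_{L^2(\T_h)}$ for $h$ small, this yields
\[
\|\chi_0\|_{L^2(\T_h)}\le C\big(h\,\3bar\chi_h\3bar+h^{k-1/2}|u|_{H^{k+1}(\Omega)}\3bar\xi-\theta\3bar\big).
\]

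Substituting this back and absorbing $Ch\,\3bar\chi_h\3bar$ for $h$ sufficiently small gives
\[
\3bar\mathcal{F}_h(\xi)-\mathcal{F}_h(\theta)\3bar=\3bar\chi_h\3bar\le C\,h^{k-1/2}|u|_{H^{k+1}(\Omega)}\,\3bar\xi-\theta\3bar=:\rho(h)\,\3bar\xi-\theta\3bar ,
\]
and since $k\ge1$ the factor $\rho(h)=Ch^{k-1/2}|u|_{H^{k+1}(\Omega)}\to0$ as $h\to0$, so $\rho(h)\in(0,1)$ for $h$ small enough. (Combined with Theorem~\ref{thm:thm2}, Banach's fixed point theorem then yields a locally unique WG solution $u_h$ in $\mathbb{B}_h(\mathcal{Q}_hu)$.)
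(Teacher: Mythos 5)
Your proposal is correct and follows essentially the same route as the paper: the same incremental error equation, the same key idea of differencing $R_h$ in the form $T_1(\xi)=(a(\xi_0)-a(u))(\nabla_w\xi-\nabla u)$, $T_2(\xi)=a(\xi_0)-a(u)-(\xi_0-u)a_u(u)$ (so that only $a_u$ and $a_{uu}$ are ever needed), the same H\"older--inverse-inequality estimates giving the factor $h^{k-1/2}$, and the same G{\aa}rding-plus-duality closure. The only (immaterial) discrepancy is that your duality bound carries $h^{k-1/2}$ where the paper records $h^{k}$; both are absorbed identically and yield $\rho(h)=Ch^{k-1/2}|u|_{H^{k+1}(\Omega)}<1$ for small $h$.
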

\begin{proof}
For the sake of simplicity, let $\psi_h=\mathcal{F}_h(\xi)$ and $\phi_h=\mathcal{F}_h(\theta)$. From the definition (\ref{eq:f-map}) of $\mathcal{F}_h$, it is easy to see that
\begin{align}\label{eq:thm3-eq2}
D_h(u;\phi_h-\psi_h, v_h) = R_h(u;\xi, v_h)-R_h(u;\theta, v_h),\quad \forall\; v_h\in V_h^0.
\end{align}
From (\ref{eq:H}), we find
\begin{align*}
R_h(u;\xi, v_h)=(T_1(\xi), \nabla_w v_h)_{\T_h} + (T_2(\xi)\nabla u, \nabla_w v_h)_{\T_h},
\end{align*}
with
\begin{align*}
T_1(\xi) &= (a(\xi_0)-a(u))(\nabla_w\xi-\nabla u),\\[2pt]
T_2(\xi) &= a(\xi_0)-a(u)-(\xi_0-u)a_u(u).
\end{align*}
Similarly, we have
\begin{align*}
R_h(u;\theta, v_h)=(T_1(\theta), \nabla_w v_h)_{\T_h} + (T_2(\theta)\nabla u, \nabla_w v_h)_{\T_h}.
\end{align*}
Thus,
\begin{align}\label{eq:thm3-eq1}
&R_h(u;\xi, v_h)-R_h(u;\theta, v_h)\nonumber\\
&=(T_1(\xi)-T_1(\theta), \nabla_w v_h)_{\T_h} + ((T_2(\xi)-T_2(\theta))\nabla u, \nabla_w v_h)_{\T_h}\nonumber\\
&=I_1+I_2.
\end{align}
By the Taylor series expansions (\ref{eq:ts-1st}) and (\ref{eq:ts-2nd}), we get
\begin{align*}
&T_1(\xi)-T_1(\theta)\\
&=(a(\xi_0)-a(u))(\nabla_w\xi-\nabla u)-(a(\theta_0)-a(u))(\nabla_w\theta-\nabla u)\\
&=(a(\xi_0)-a(\theta_0))(\nabla_w\theta-\nabla u)
+(a(\xi_0)-a(u))\nabla_w(\xi-\theta)\\
&=(\xi_0-\theta_0)\widetilde{a}_u(\xi_0,\theta_0)(\nabla_w\theta-\nabla u)
+(\xi_0-u)\widetilde{a}_u(\xi_0,u)\nabla_w(\xi-\theta),
\end{align*}
and
\begin{align*}
T_2(\xi)-T_2(\theta)
&=a(\xi_0)-a(\theta_0)-(\xi_0-\theta_0)a_u(u)\\
&=[a(\xi_0)-a(\theta_0)-(\xi_0-\theta_0)a_u(\theta_0)]\\
&\qquad + (a_u(\theta_0)-a_u(u))(\xi_0-\theta_0)\\
&=(\xi_0-\theta_0)^2\widetilde{a}_{uu}(\xi_0,\theta_0)
+(\xi_0-\theta_0)(\theta_0-u)\widetilde{a}_{uu}(\theta_0,u).
\end{align*}
Then, by H\"{o}lder's inequality and the inverse inequality (\ref{inv}), we have
\begin{align}\label{eq:I1-est1}
|I_1|&\leq |((\xi_0-\theta_0)\widetilde{a}_u(\xi_0,\theta_0)(\nabla_w\theta-\nabla u), \nabla_w v_h)_{\T_h}| \nonumber\\
&\quad +|((\xi_0-u)\widetilde{a}_u(\xi_0,u)\nabla_w(\xi-\theta), \nabla_w v_h)_{\T_h}|\nonumber\\
&\leq M_a(\|\xi_0-\theta_0\|_{L^4(\T_h)}\|\nabla_w\theta-\nabla u\|_{L^2(\T_h)}\nonumber\\
&\quad +\|\xi_0-u\|_{L^4(\T_h)}\|\nabla_w(\xi-\theta)\|_{L^2(\T_h)})
\|\nabla_wv_h\|_{L^4(\T_h)}\nonumber\\
&\leq Ch^{-1/2}(\|\xi_0-\theta_0\|_{L^4(\T_h)}\|\nabla_w\theta-\nabla u\|_{L^2(\T_h)}\nonumber\\
&\quad +\|\xi_0-u\|_{L^4(\T_h)}\3bar\xi-\theta\3bar)
\|\nabla_wv_h\|_{L^2(\T_h)}.
\end{align}
Using Lemma \ref{lem:P-wg} and \ref{lem:happy}, one has
\begin{align*}
\|\xi_0-\theta_0\|_{L^4(\T_h)}
\leq C_P\|\xi-\theta\|_{1,h}
\leq C\3bar\xi-\theta\3bar,
\end{align*}
and
\begin{align*}
\|\xi_0-u\|_{L^4(\T_h)}&\leq C_P\|\xi-u\|_{1,h}\\
&\leq C_P(\|\xi-\mathcal{Q}_hu\|_{1,h}+\|\mathcal{Q}_hu-u\|_{1,h})\\
&\leq C(\3bar\xi-\mathcal{Q}_hu\3bar + h^k|u|_{H^{k+1}(\Omega)})\\
&\leq  Ch^k|u|_{H^{k+1}(\Omega)},
\end{align*}
where we have used $\xi\in\mathbb{B}_h(\mathcal{Q}_hu)$ in the last inequality.

By Lemma \ref{lem:com}, the triangle inequality and $\theta\in\mathbb{B}_h(\mathcal{Q}_hu)$, we obtain
\begin{align*}
\|\nabla_w\theta-\nabla u\|_{L^2(\T_h)}
\leq \3bar \theta-\mathcal{Q}_hu\3bar + \|\Pi_h(\nabla u)-\nabla u\|_{L^2(\T_h)}
\leq  Ch^k|u|_{H^{k+1}(\Omega)}.
\end{align*}
Plugging those estimates back into (\ref{eq:I1-est1}) yields
\begin{align}\label{eq:I1-est2}
|I_1|\leq Ch^{k-1/2}|u|_{H^{k+1}(\Omega)}\3bar\xi-\theta\3bar\3bar v_h\3bar.
\end{align}

Similarly, we have the following estimate for $I_2$:
\begin{align*}
|I_2|&\leq |((\xi_0-\theta_0)^2\widetilde{a}_{uu}(\xi_0,\theta_0)\nabla u,\nabla_w v_h)_{\T_h}|\nonumber\\
&\quad +|((\xi_0-\theta_0)(\theta_0-u)\widetilde{a}_{uu}(\theta_0,u)\nabla u,\nabla_w v_h)_{\T_h}|\nonumber\\
&\leq M_a|u|_{W_{\infty}^1(\Omega)}\|\xi_0-\theta_0\|_{L^4(\T_h)}(\|\xi_0-\theta_0\|_{L^4(\T_h)} +\|\theta_0-u\|_{L^4(\T_h)})\|\nabla_wv_h\|_{L^2(\T_h)}
\nonumber\\
&\leq Ch^{k}|u|_{H^{k+1}(\Omega)}\3bar\xi-\theta\3bar\3bar v_h\3bar,
\end{align*}
which together with (\ref{eq:I1-est2}) leads to
\begin{align}
|R_h(u;\xi, v_h)-R_h(u;\theta, v_h)|\leq Ch^{k-1/2}|u|_{H^{k+1}(\Omega)}\3bar\xi-\theta\3bar\3bar v_h\3bar.
\end{align}
Denote by $\chi_h=\phi_h-\psi_h$. Taking $v_h=\chi_h$ in (\ref{eq:thm3-eq2}) and then using the G{\aa}rding's inequality (\ref{eq:garding}), we find
\begin{align*}
\gamma (\3bar\chi_h\3bar^2+\|\chi_0\|_{L^2(\T_h)}^2)
&\leq \beta \|\chi_0\|_{L^2(\T_h)}^2 + D_h(u;\chi_h, \chi_h)\\
&\leq \beta \|\chi_0\|_{L^2(\T_h)}^2 + Ch^{k-1/2}|u|_{H^{k+1}(\Omega)}\3bar\xi-\theta\3bar\3bar \chi_h\3bar,
\end{align*}
which implies
\begin{align}\label{eq:chi-3bar}
\gamma \3bar\chi_h\3bar
&\leq \beta \|\chi_0\|_{L^2(\T_h)} + Ch^{k-1/2}|u|_{H^{k+1}(\Omega)}\3bar\xi-\theta\3bar.
\end{align}
Then an application of duality argument as in Lemma \ref{lem:l5} yields
\begin{align*}
\|\chi_0\|_{L^2(\T_h)}\leq C(h\3bar\chi_h\3bar+h^{k}|u|_{H^{k+1}(\Omega)}\3bar\xi-\theta\3bar),
\end{align*}
which combining with (\ref{eq:chi-3bar}) implies
\begin{align*}
\3bar\chi_h\3bar
\leq Ch^{k-1/2}|u|_{H^{k+1}(\Omega)}\3bar\xi-\theta\3bar.
\end{align*}
Let $\rho(h)=Ch^{k-1/2}|u|_{H^{k+1}(\Omega)}$. Since $k\geq 1$, then we have $\rho(h)<1$ for sufficiently small $h$. The proof is completed.
\end{proof}

\subsection{Error estimates}

According to Theorem \ref{thm:thm2} and \ref{thm:thm3} and Brouwer's fixed point theorem, we can deduce that $\mathcal{F}_h$ has a unique fixed point $u_h$ in $\mathbb{B}_h(\mathcal{Q}_hu)$, which is also the solution of the nonlinear WG finite element scheme (\ref{wg}). By the definition of the ball  $\mathbb{B}_h(\mathcal{Q}_hu)$, the finite element approximation $u_h$ of $u$ satisfies the bound
\begin{align}\label{eq:last1}
\3bar \mathcal{Q}_hu - u_h\3bar \leq Ch^k|u|_{H^{k+1}(\Omega)},
\end{align}
for sufficiently small $h$.
Furthermore,  Theorem \ref{thm:thm1} gives
\begin{equation}\label{eq:last2}
\|\mathcal{Q}_0u-u_0\|_{L^2(\T_h)}
\leq C(h^{k+1}+h^{2k}|u|_{H^{k+1}(\Omega)})|u|_{H^{k+1}(\Omega)},
\end{equation}
for sufficiently small $h$.

By the triangle inequality, Lemma \ref{lem:happy}, (\ref{Q0-app}), (\ref{eq:last1}) and (\ref{eq:last2}), we immediately have the following results.
\begin{theorem}\label{thm:main}
Suppose that $u\in H^{k+1}(\Omega)\cap W_{\infty}^1(\Omega)$.  Then there exists a constant $C>0$, independent of $h$, such that
\begin{equation*}
\|u-u_h\|_{1,h}\leq Ch^{k}|u|_{H^{k+1}(\Omega)},
\end{equation*}
and
\begin{equation*}
\|u-u_0\|_{L^2(\T_h)}
\leq C(h^{k+1}+h^{2k}|u|_{H^{k+1}(\Omega)})|u|_{H^{k+1}(\Omega)},
\end{equation*}
for sufficiently small $h$.
\end{theorem}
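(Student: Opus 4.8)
The plan is to derive both bounds from the triangle inequality, routing the total error through the projection $\mathcal{Q}_hu$ and then invoking the discrete error estimates (\ref{eq:last1}) and (\ref{eq:last2}) already established for $e_h=\mathcal{Q}_hu-u_h$ by the Brouwer fixed point argument of Sections 3.3--3.4. The only genuinely new ingredient is a bound on the projection error $u-\mathcal{Q}_hu$ in the two norms, and that is entirely standard.

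For the energy-like estimate, I would write
\[
\|u-u_h\|_{1,h}\le \|u-\mathcal{Q}_hu\|_{1,h}+\|\mathcal{Q}_hu-u_h\|_{1,h}.
\]
The second term is controlled by the norm equivalence of Lemma \ref{lem:happy} together with (\ref{eq:last1}): $\|\mathcal{Q}_hu-u_h\|_{1,h}\le C_1^{-1}\3bar \mathcal{Q}_hu-u_h\3bar\le Ch^k|u|_{H^{k+1}(\Omega)}$. For the first term, viewing $u-\mathcal{Q}_hu$ as the weak function $\{u-\mathcal{Q}_0u,\,u-\mathcal{Q}_bu\}$, its squared $\|\cdot\|_{1,h}$-norm splits element by element into $\|\nabla(u-\mathcal{Q}_0u)\|_{L^2(K)}^2$ and $h_K^{-1}\|\mathcal{Q}_0u-\mathcal{Q}_bu\|_{L^2(\partial K)}^2$. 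The first of these is $O(h_K^{2k}|u|_{H^{k+1}(K)}^2)$ by (\ref{Q0-app}); for the second I use the $L^2$-optimality of $\mathcal{Q}_b$ to get $\|\mathcal{Q}_0u-\mathcal{Q}_bu\|_{L^2(\partial K)}\le 2\|\mathcal{Q}_0u-u\|_{L^2(\partial K)}$ (exactly as in the proof of Lemma \ref{lem:l6}) and then the trace inequality (\ref{trace1}) with (\ref{Q0-app}) to obtain $\|\mathcal{Q}_0u-u\|_{L^2(\partial K)}\le Ch_K^{k+1/2}|u|_{H^{k+1}(K)}$, just as in the proof of Lemma \ref{lem:l2}. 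Summing over $K\in\T_h$ gives $\|u-\mathcal{Q}_hu\|_{1,h}\le Ch^k|u|_{H^{k+1}(\Omega)}$, and the first assertion of the theorem follows.

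For the $L^2$ estimate, I would likewise split
\[
\|u-u_0\|_{L^2(\T_h)}\le \|u-\mathcal{Q}_0u\|_{L^2(\T_h)}+\|\mathcal{Q}_0u-u_0\|_{L^2(\T_h)}.
\]
The first term is $O(h^{k+1}|u|_{H^{k+1}(\Omega)})$ directly from (\ref{Q0-app}) with $r=0$, while the second is precisely (\ref{eq:last2}) (equivalently (\ref{eq:thm1-2}) of Theorem \ref{thm:thm1}). Adding the two yields $\|u-u_0\|_{L^2(\T_h)}\le C(h^{k+1}+h^{2k}|u|_{H^{k+1}(\Omega)})|u|_{H^{k+1}(\Omega)}$, which is the second assertion.

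I do not expect any real obstacle here: all of the analytical difficulty --- the G{\aa}rding inequality of Lemma \ref{lem:garding}, the self-mapping and contraction properties of $\mathcal{F}_h$ on $\mathbb{B}_h(\mathcal{Q}_hu)$ (Theorems \ref{thm:thm2} and \ref{thm:thm3}), and the duality argument that bounds $\|\zeta_0\|_{L^2(\T_h)}$ --- has already been carried out, so the theorem is a short corollary. If anything deserves care, it is only keeping the stabilization/boundary contribution to $\|u-\mathcal{Q}_hu\|_{1,h}$ correctly accounted for, but that computation merely duplicates ones already done in the proofs of Lemmas \ref{lem:l2} and \ref{lem:l6}.
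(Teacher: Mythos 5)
Your proposal is correct and follows exactly the route the paper takes: the paper dispatches Theorem \ref{thm:main} as an immediate consequence of the triangle inequality through $\mathcal{Q}_hu$, Lemma \ref{lem:happy}, the approximation property (\ref{Q0-app}), and the fixed-point estimates (\ref{eq:last1})--(\ref{eq:last2}). You merely spell out the routine projection-error bounds that the paper leaves implicit.
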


\section{Two-grid algorithm of the WG method}
In this section, we propose a two-grid algorithm of the WG method for the quasilinear elliptic problem (\ref{pde})-(\ref{pde-bc}). Let $\T_{\tau}$ and $\T_h$ be two shape-regular partitions of the domain $\Omega$, respectively, with different mesh sizes {$\tau$} and $h$ ($\tau>h$). And the corresponding WG finite element spaces $V_{\tau}$ and $V_h$ will be called coarse and fine space, respectively. In the applications given below, we shall always assume that
\[
 {\tau} = O(h^{\lambda})\quad\mbox{for some}\; 0<\lambda<1.
\]
With the discrete variational form $A_h(\cdot;\cdot,\cdot)$ defined in (\ref{bilin-form}), let us introduce a two-grid algorithm of the WG method (\ref{wg}) as follows:
\begin{algorithm}[Two-grid WG method]\label{alg:two-grid}
\begin{itemize}
\item[Step 1.] On the coarse mesh $\T_{\tau}$, seek $u_{\tau}\in V_{\tau}$ such that
\[A_{\tau}(u_{\tau};u_{\tau}, v)= (f, v_0),\quad \forall\; v=\{v_0, v_b\}\in V_{\tau}^0.\]\\
\item[Step 2.] On the fine mesh $\T_h$, seek $u^h\in V_h$ such that
\[A_h(u_{\tau}; u^h, v) = (f, v_0),\quad \forall\; v=\{v_0, v_b\}\in V_h^0.\]
\end{itemize}
\end{algorithm}
In this algorithm, we firstly use the WG metod solve the quasilinear elliptic problem on a coarse space $V_{\tau}$, and obtain a rough approximation $u_{\tau}\in V_h$; and then use $u_{\tau}$ to linearize the nonlinear scheme on the fine space $V_h$, and solve the resulting linearized problem to get $u^h\in V_h$.

In order to prove the convergence of Algorithm \ref{alg:two-grid}, we introduce the following two lemmas.

\begin{lemma}\label{lem:bnd}
Let $u\in H^2(\Omega)\cap W_{\infty}^1(\Omega)$ and $u_h\in V_h$ be the solutions of problem (\ref{pde})-(\ref{pde-bc}) and the discrete problem (\ref{wg}), respectively. Then, there holds
\begin{align*}
\|\nabla_wu_h\|_{L^{\infty}(\T_h)}\leq C(u),
\end{align*}
where $C(u)$ is a generic positive constant, which is independent of $h$ but dependent on $|u|_{H^{2}(\Omega)}$ and $|u|_{W_{\infty}^1(\Omega)}$.
\end{lemma}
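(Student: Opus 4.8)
\textbf{Proof proposal for Lemma~\ref{lem:bnd}.} The plan is to bound $\|\nabla_w u_h\|_{L^\infty(\T_h)}$ by comparing $u_h$ with the interpolant $\mathcal{Q}_h u$, whose weak gradient is $\Pi_h(\nabla u)$ by Lemma~\ref{lem:com}, and then controlling everything through the already-established energy estimate \eqref{eq:last1}. Writing $\nabla_w u_h = \nabla_w(\mathcal{Q}_h u) - \nabla_w e_h = \Pi_h(\nabla u) - \nabla_w e_h$, the triangle inequality gives
\begin{align*}
\|\nabla_w u_h\|_{L^\infty(\T_h)} \leq \|\Pi_h(\nabla u)\|_{L^\infty(\T_h)} + \|\nabla_w e_h\|_{L^\infty(\T_h)}.
\end{align*}
The first term is harmless: since $\Pi_h$ is $L^2$-stable and, on shape-regular meshes, $L^\infty$-stable on polynomial spaces, $\|\Pi_h(\nabla u)\|_{L^\infty(\T_h)} \leq C\|\nabla u\|_{L^\infty(\Omega)} = C|u|_{W^1_\infty(\Omega)}$ (one may also insert $\|\nabla u-\Pi_h\nabla u\|_{L^\infty}$ and use \eqref{Pih-app} with $r=0$, $p=q=\infty$, but $H^2\hookrightarrow W^1_q$ for finite $q$ suffices in 2D, so I would rather estimate $\|\Pi_h(\nabla u)\|_{L^\infty}$ directly via the inverse inequality from an $L^q$ bound as below).

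The main work is the second term. Here I would use the inverse inequality \eqref{inv}: on each $K\in\T_h$, since $\nabla_w e_h|_K\in[\mathbb{P}_{k-1}(K)]^2$,
\begin{align*}
\|\nabla_w e_h\|_{L^\infty(K)} \leq C h_K^{-2/q}\|\nabla_w e_h\|_{L^q(K)}
\end{align*}
for any fixed $q\in[2,\infty)$, and then pass from $L^q$ to $L^2$ by another inverse estimate, $\|\nabla_w e_h\|_{L^q(K)}\leq C h_K^{2(1/q-1/2)}\|\nabla_w e_h\|_{L^2(K)}$, so that overall $\|\nabla_w e_h\|_{L^\infty(K)}\leq C h_K^{-1}\|\nabla_w e_h\|_{L^2(K)}$. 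Summing (using shape-regularity to replace $h_K$ by $h$ up to constants) and invoking \eqref{eq:last1} with $k=1$, i.e. $\|\nabla_w e_h\|_{L^2(\T_h)}\leq\3bar e_h\3bar\leq Ch|u|_{H^2(\Omega)}$, gives $\|\nabla_w e_h\|_{L^\infty(\T_h)}\leq C|u|_{H^2(\Omega)}$. Combining the two bounds yields $\|\nabla_w u_h\|_{L^\infty(\T_h)}\leq C(u)$ with $C(u)$ depending only on $|u|_{H^2(\Omega)}$ and $|u|_{W^1_\infty(\Omega)}$, as claimed.

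The step I expect to be the main obstacle is the inverse-inequality chain in the second term: one must be slightly careful that the global $L^\infty$ norm is a maximum over elements rather than a sum, so the powers of $h$ have to work out element-by-element (which they do, since each local bound is $h_K^{-1}\|\nabla_w e_h\|_{L^2(K)}$ and $\max_K h_K^{-1}\|\nabla_w e_h\|_{L^2(K)}\leq h^{-1}\3bar e_h\3bar$ up to a shape-regularity constant). One should also note that this is exactly why the hypothesis is $u\in H^2\cap W^1_\infty$ with $k=1$ in the energy estimate: the $O(h)$ decay of $\3bar e_h\3bar$ precisely cancels the $h^{-1}$ blow-up from the inverse inequality, leaving an $h$-independent bound. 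Should one wish to avoid the second inverse step, an alternative is to estimate $\|\nabla_w e_h\|_{L^4(K)}$ directly (as done in Lemma~\ref{lem:l3}) and then apply \eqref{inv} from $L^4$ to $L^\infty$; either route is routine once the scaling is set up correctly.
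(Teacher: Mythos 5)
Your proposal is correct and follows essentially the same route as the paper: the same splitting $\nabla_w u_h = \Pi_h(\nabla u) - \nabla_w(\mathcal{Q}_h u - u_h)$ via Lemma~\ref{lem:com}, the inverse inequality $\|\nabla_w(u_h-\mathcal{Q}_hu)\|_{L^{\infty}(\T_h)}\leq Ch^{-1}\|\nabla_w(u_h-\mathcal{Q}_hu)\|_{L^2(\T_h)}$ combined with the energy estimate (\ref{eq:last1}) to cancel the $h^{-1}$, and a bound on $\|\Pi_h(\nabla u)\|_{L^{\infty}(\T_h)}$ (the paper inserts $\|\Pi_h(\nabla u)-\nabla u\|_{L^\infty}$ and uses (\ref{Pih-app}), one of the two options you mention). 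No substantive difference.
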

\begin{proof}
By the triangle inequality and Lemma \ref{lem:com}, we have
\begin{align}\label{eq:sec4-eq2}
\|\nabla_wu_h\|_{L^{\infty}(\T_h)} \leq \|\nabla_w(u_h-\mathcal{Q}_hu)\|_{L^{\infty}(\T_h)}
+\|\Pi_h(\nabla u)\|_{L^{\infty}(\T_h)}.
\end{align}
Using the inverse inequality (\ref{inv}) and (\ref{eq:last1}), we get
\begin{align}\label{eq:sec4-eq3}
\|\nabla_w(u_h-\mathcal{Q}_hu)\|_{L^{\infty}(\T_h)}
\leq Ch^{-1}\|\nabla_w(u_h-\mathcal{Q}_hu)\|_{L^2(\T_h)}
\leq C|u|_{H^2(\Omega)}.
\end{align}
In view of the approximation property (\ref{Pih-app}), we have
\begin{align*}
\|\Pi_h(\nabla u)\|_{L^{\infty}(\T_h)}
&\leq
\|\Pi_h(\nabla u)-\nabla u\|_{L^{\infty}(\T_h)}+\|\nabla u\|_{L^{\infty}(\Omega)}\nonumber\\
&\leq C|u|_{H^2(\Omega)}+|u|_{W_{\infty}^1(\Omega)},
\end{align*}
which combining with (\ref{eq:sec4-eq2}) and (\ref{eq:sec4-eq3}) completes the proof.
\end{proof}

\begin{lemma}\label{lem:coer}
For a given $\phi\in H^1(\T_h)$, there holds
\begin{align*}
A_h(\phi; v_h, v_h) \geq \min\{\alpha_0, 1\}\3bar v_h\3bar, \quad \forall\; v_h \in V_h^0.
\end{align*}
\end{lemma}
\begin{proof}
The proof is trivial, we omit it here.
\end{proof}

Now, we are ready to prove convergence estimates in the $H^1$-like norm $\|\cdot\|_{1,h}$ for the two-grid algorithm of the WG method under the assumption $u\in H^{k+1}(\Omega)\cap W_{\infty}^1(\Omega)$.

\begin{theorem}\label{thm:two-grid}
Assume that $u\in H^{k+1}(\Omega)\cap W_{\infty}^1(\Omega)$. Let $u_h\in V_h$ and $u^h\in V_h$ be the solutions obtained by Algorithm \ref{alg:nonlin} and \ref{alg:two-grid}, respectively. For $\tau\ll1$, we have
\begin{align}
\label{eq:sec4-eq6}
\|u-u^h\|_{1,h}&\leq C(u)(h^k+\tau^{k+1}),
\end{align}
where $C(u)$ is a positive constant, which is independent of $h$ but dependent on $|u|_{H^{k+1}(\Omega)}$ and $|u|_{W_{\infty}^1(\Omega)}$.
\end{theorem}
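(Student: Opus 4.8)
The plan is to bound the auxiliary error $\epsilon_h:=\mathcal{Q}_hu-u^h$, which belongs to $V_h^0$ since Step~2 of Algorithm~\ref{alg:two-grid} is understood to impose $u^h_b=\mathcal{Q}_bg$ on $\partial\Omega$ (and the linear problem in Step~2 is well posed by the coercivity of Lemma~\ref{lem:coer}). Once $\3bar\epsilon_h\3bar$ is under control, the assertion follows from $\|u-u^h\|_{1,h}\le\|u-\mathcal{Q}_hu\|_{1,h}+\|\epsilon_h\|_{1,h}$, the approximation bound $\|u-\mathcal{Q}_hu\|_{1,h}\le Ch^{k}|u|_{H^{k+1}(\Omega)}$ coming from (\ref{Q0-app}) and (\ref{trace1}), and Lemma~\ref{lem:happy} to pass from $\3bar\epsilon_h\3bar$ to $\|\epsilon_h\|_{1,h}$. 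The crucial inputs are the coarse-grid estimates of Theorem~\ref{thm:main} applied on $\T_H$ (legitimate for $H\ll1$): writing $u_{H,0}$ for the interior component of $u_H$, they yield $\|u-u_{H,0}\|_{L^2(\Omega)}\le C(u)H^{k+1}$ (using $k\ge1$, so that $H^{2k}\le H^{k+1}$) and, after an inverse inequality (\ref{inv}) on $\T_H$, $\|u-u_{H,0}\|_{L^{\infty}(\Omega)}\le C(u)H\to0$.

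I would first derive an error equation for $\epsilon_h$. Repeating the computation that produces (\ref{eq:m4}), then using Lemma~\ref{lem:com} and adding $s_h(\mathcal{Q}_hu,v)$, gives
\[
(a(u)\nabla_w\mathcal{Q}_hu,\nabla_wv)_{\T_h}+s_h(\mathcal{Q}_hu,v)=(f,v_0)+\ell_1(u,v)+\ell_2(u,v)+s_h(\mathcal{Q}_hu,v),\qquad\forall v\in V_h^0.
\]
Subtracting Step~2 of Algorithm~\ref{alg:two-grid}, i.e. $(a(u_{H,0})\nabla_wu^h,\nabla_wv)_{\T_h}+s_h(u^h,v)=(f,v_0)$, and decomposing $a(u)\nabla_w\mathcal{Q}_hu-a(u_{H,0})\nabla_wu^h=a(u)\nabla_w\epsilon_h+(a(u)-a(u_{H,0}))\nabla_wu^h$, one arrives at
\[
A_h(u;\epsilon_h,v)=\ell_1(u,v)+\ell_2(u,v)+s_h(\mathcal{Q}_hu,v)-\big((a(u)-a(u_{H,0}))\nabla_wu^h,\nabla_wv\big)_{\T_h},\qquad\forall v\in V_h^0.
\]

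Next I would set $v=\epsilon_h$ and invoke the coercivity of $A_h(u;\cdot,\cdot)$ from Lemma~\ref{lem:coer}. Lemma~\ref{lem:l2} bounds the first three terms on the right by $Ch^{k}|u|_{H^{k+1}(\Omega)}\3bar\epsilon_h\3bar$. For the last term I would use $\nabla_wu^h=\Pi_h(\nabla u)-\nabla_w\epsilon_h$ (Lemma~\ref{lem:com}) to split it into $-((a(u)-a(u_{H,0}))\Pi_h(\nabla u),\nabla_w\epsilon_h)_{\T_h}+((a(u)-a(u_{H,0}))\nabla_w\epsilon_h,\nabla_w\epsilon_h)_{\T_h}$. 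By the Lipschitz bound $|a(u)-a(u_{H,0})|\le M_a|u-u_{H,0}|$, the first piece is at most $M_a\|u-u_{H,0}\|_{L^2(\Omega)}\|\Pi_h(\nabla u)\|_{L^{\infty}(\Omega)}\3bar\epsilon_h\3bar\le C(u)H^{k+1}\3bar\epsilon_h\3bar$, where $\|\Pi_h(\nabla u)\|_{L^{\infty}(\Omega)}\le C|u|_{W^1_{\infty}(\Omega)}$ by the local $L^{\infty}$-stability of the $L^2$-projection $\Pi_h$; the second piece is at most $M_a\|u-u_{H,0}\|_{L^{\infty}(\Omega)}\3bar\epsilon_h\3bar^2$, which for $H\ll1$ is absorbed into the left-hand side since $\|u-u_{H,0}\|_{L^{\infty}(\Omega)}\to0$. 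Collecting the estimates and dividing by $\3bar\epsilon_h\3bar$ gives $\3bar\epsilon_h\3bar\le C(u)(h^{k}+H^{k+1})$, and the triangle inequality described above finishes the proof.

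The main obstacle is the frozen-coefficient term $\big((a(u)-a(u_{H,0}))\nabla_wu^h,\nabla_wv\big)_{\T_h}$: one must split it so that the part that is quadratic in $\epsilon_h$ carries the factor $\|u-u_{H,0}\|_{L^{\infty}(\Omega)}$, which vanishes with $H$ and is absorbed by coercivity, while the remaining part must be driven down to order $H^{k+1}$. The latter is possible only by invoking the coarse-grid $L^2$ estimate of Theorem~\ref{thm:main} --- the energy estimate alone would give merely $H^{k}$ --- combined with an $L^{\infty}$ bound on $\Pi_h(\nabla u)$. The companion fact $\|u-u_{H,0}\|_{L^{\infty}(\Omega)}\to0$, obtained from an inverse inequality on $\T_H$ and Theorem~\ref{thm:main}, is routine but is exactly what forces the restriction $H\ll1$.
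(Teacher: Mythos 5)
Your proof is correct, but it takes a genuinely different route from the paper's. The paper never forms an error equation for $\mathcal{Q}_hu-u^h$: instead it compares $u^h$ with the fine-grid nonlinear solution $u_h$, writing $A_h(u_H;u^h-u_h,v)=((a(u_{h,0})-a(u_{H,0}))\nabla_wu_h,\nabla_wv)_{\T_h}=S_1+S_2$ with the exact solution $u$ inserted as an intermediate value, bounding $|S_1|\leq C(u)h^{k+1}\3bar v\3bar$ and $|S_2|\leq C(u)H^{k+1}\3bar v\3bar$ via the $L^2$ estimates of Theorem \ref{thm:main} on both meshes together with the $L^\infty$ bound $\|\nabla_wu_h\|_{L^\infty(\T_h)}\leq C(u)$ of Lemma \ref{lem:bnd}, and then concluding $\|u_h-u^h\|_{1,h}\leq C(u)H^{k+1}$ by the coercivity of $A_h(u_H;\cdot,\cdot)$ and a final triangle inequality through $u_h$. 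This is shorter than your argument because the consistency terms $\ell_1,\ell_2,s_h(\mathcal{Q}_hu,\cdot)$ cancel in the difference $A_h(u_h;u_h,v)-A_h(u_H;u^h,v)=0$, and no absorption step is needed since the test coefficient $a(u_{H,0})$ is already the one appearing in Step 2. Your version buys a self-contained estimate that never references $u_h$, at the cost of re-deriving the consistency error and of the smallness argument $\|u-u_{H,0}\|_{L^\infty}\to0$ needed to absorb the term quadratic in $\epsilon_h$; the two ingredients you cannot avoid --- the coarse-grid $L^2$ estimate of order $H^{k+1}$ and an $L^\infty$ bound on a discrete gradient (your $\Pi_h(\nabla u)$ playing the role of the paper's $\nabla_wu_h$) --- are exactly the ones the paper uses, and your $\Pi_h(\nabla u)$ variant is in fact marginally easier since it does not require the energy estimate (\ref{eq:last1}). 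Both arguments are valid; yours would also deliver the result if one preferred not to assume solvability of the nonlinear fine-grid problem.
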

\begin{proof}
From (\ref{wg}) and the Step 2 of Algorithm \ref{alg:two-grid}, we know that
\begin{align*}
A_h(u_{\tau}; u^h, v) = (f, v_0) = A_h(u_h; u_h, v),\quad \forall\; v\in V_h^0,
\end{align*}
and thus using the definition of $A_h(\cdot;\cdot,\cdot)$, we have
\begin{align}\label{eq:sec4-eq1}
A_h(u_{\tau}; u^h-u_h, v)
&= A_h(u_{\tau}; u^h, v)-A_h(u_{\tau}; u_h, v)\nonumber\\
&= A_h(u_h; u_h, v)-A_h(u_{\tau}; u_h, v)\nonumber\\
&= ((a(u_{h,0})-a(u_{\tau,0}))\nabla_w u_h, \nabla_w v)_{\T_h}\nonumber\\
&= S_1 + S_2,
\end{align}
with
\begin{align*}
S_1 &= ((a(u_{h,0})-a(u))\nabla_w u_h, \nabla_w v)_{\T_h},\\
S_2 &= ((a(u)-a(u_{\tau,0}))\nabla_w u_h, \nabla_w v)_{\T_h}.
\end{align*}
By the Taylor expansion (\ref{eq:ts-1st}) and H\"{o}lder's inequality, we obtain
\begin{align*}
|S_1| &= |(\widetilde{a}_u(u_{h,0}, u)(u_{h,0}-u)\nabla_w u_h, \nabla_w v)_{\T_h}|\nonumber\\
&\leq M_a \|u_{h,0}-u\|_{L^2(\T_h)}\|\nabla_w u_h\|_{L^{\infty}(\T_h)}\|\nabla_wv\|_{L^2(\T_h)},
\end{align*}
then using Lemma \ref{lem:bnd} and Theorem \ref{thm:main} yields
\begin{align}\label{eq:sec4-eq4}
|S_1|\leq C(u)h^{k+1}\3bar v\3bar.
\end{align}
In a similar way, we obtain
\begin{align*}
|S_2|\leq C(u)\tau^{k+1}\3bar v\3bar,
\end{align*}
which together with (\ref{eq:sec4-eq4}) and (\ref{eq:sec4-eq1}) leads to
\begin{align*}
A_h(u_{\tau}; u^h-u_h, v)
\leq C(u)\tau^{k+1}\3bar v\3bar.
\end{align*}
Taking $v=u^h-u_h$ in the above equation and using Lemma \ref{lem:coer} and \ref{lem:happy} yields
\begin{align*}
\|u_h-u^h\|_{1,h} &\leq C(u)\tau^{k+1},
\end{align*}
which is combining with  Theorem \ref{thm:main} completes the proof.
\end{proof}
\begin{remark}
Theorem \ref{thm:two-grid} suggests that optimal order of convergence in the two-grid WG method with $k=1$ can be achieved by employing $\tau=O(h^{1/2})$.
\end{remark}

\section{Numerical Experiments} \label{Sec:numeric}
In this section, we conduct some numerical experiments to verify the theoretical predication on the WG method (\ref{wg}) and to demonstrate the efficiency of the two-grid WG method described in Algorithm \ref{alg:two-grid}.

The WG scheme (\ref{wg}) yields a nonlinear algebraic system, which will be solved by Newton's method. In our numerical experiments, the initial guess for Newton's iteration is taken to be the zero solution, i.e., $u_h^{(0)}=0$. The Newton iteration is continued until a tolerance of $\3bar u_h^{(k+1)}-u_h^{(k)}\3bar < 10^{-12}$ is reached, where $u_h^{(k)}$ and $u_h^{(k+1)}$ are two successive iterative solutions, respectively.

The following two examples are used for our numerical experiments.
\begin{example}\label{eg1}
Consider the model problem (\ref{pde})-(\ref{pde-bc}) with $a(u)=1+u$ in the domain $\Omega=(0,1)^2$.
The source data $f$ and Dirichlet data $g$ are chosen so that the exact solution is
\[u=\sin(\pi x)\sin(\pi y).\]
\end{example}

\begin{example}\label{eg2}
Consider the model problem (\ref{pde})-(\ref{pde-bc}) with $a(u)=1+\sin(u)/2$ in the domain $\Omega=(0,1)^2$.
The source data $f$ and Dirichlet data $g$ are chosen so that the exact solution is
\[u=\varphi(x)\varphi(y),\]
with {$\varphi(x)= x(1-x)e^{2x}$}.
\end{example}

\subsection{Accuracy test for the WG method}
In this subsection, we conduct some numerical tests for Example 1 and 2 to validate the theoretical results predicated in Theorem \ref{thm:main} for the WG method (\ref{wg}).
We shall consider rectangular grids and polygonal grids.
A least-squares fit is conducted for computing the numerical order of convergence.
\begin{figure}[!h]
\centering
\subfloat[The first level grid with $4\times 4$ elements]
{\includegraphics[width=0.5\textwidth]{./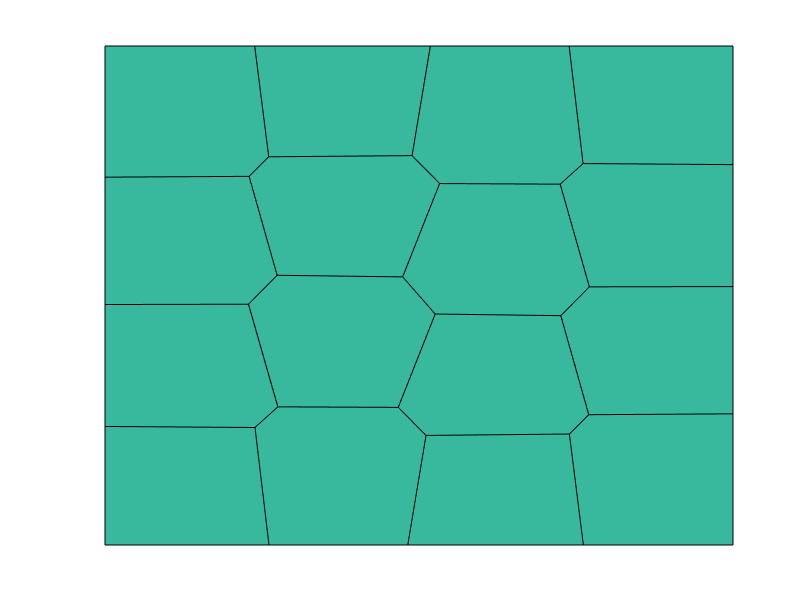}}
\subfloat[The second level grid with $8\times 8$ elements]
{\includegraphics[width=0.5\textwidth]{./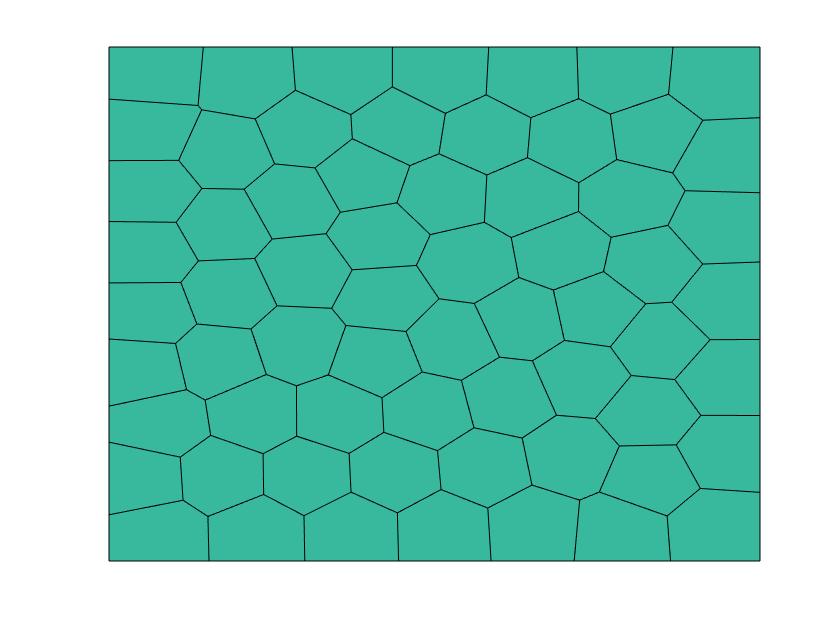}}
\caption{The demo of polygonal grids. }
\label{fig1}
\end{figure}

\begin{figure}[!h]
\centering
\subfloat[Rectangular grid with $4\times 4$ elements]
{\includegraphics[width=0.5\textwidth]{./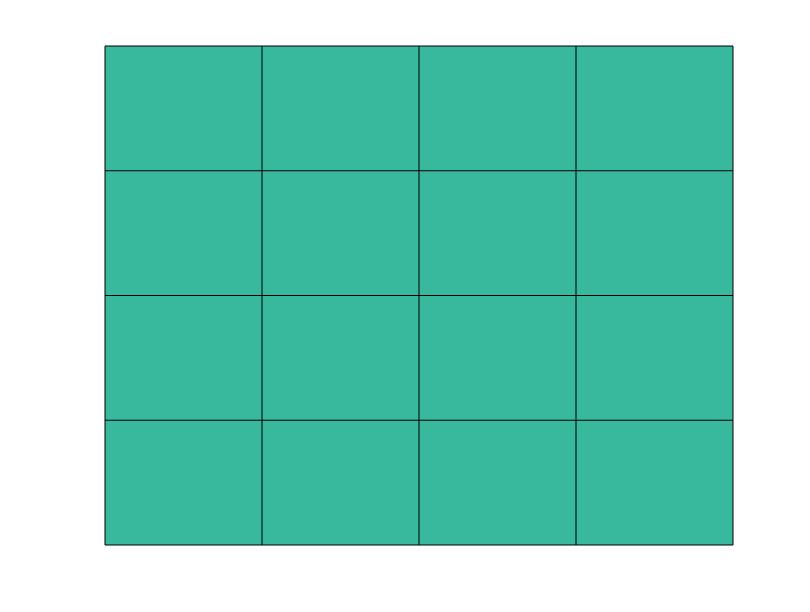}}
\subfloat[Rectangular grid with $8\times 8$ elements]
{\includegraphics[width=0.5\textwidth]{./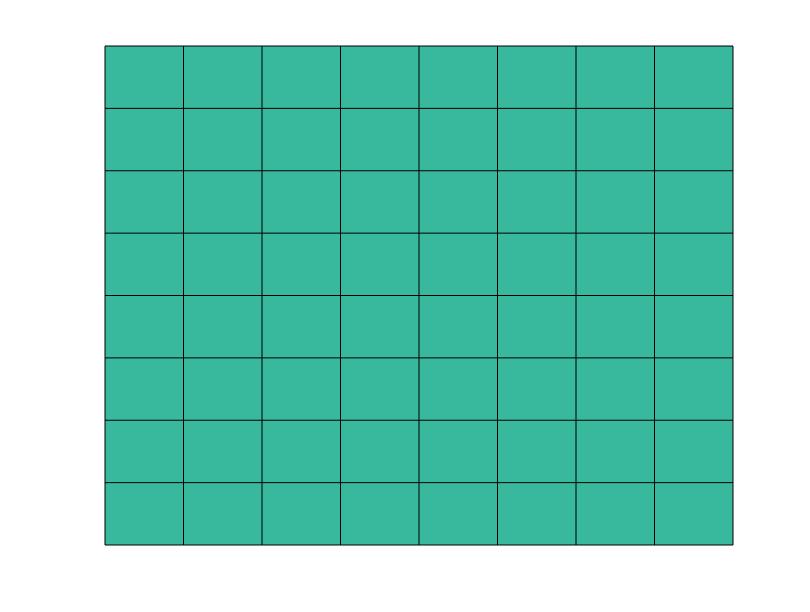}}
\caption{The demo of Rectangular grids. }
\label{fig2}
\end{figure}

Figure \ref{fig1} shows the first two level of polygonal grids used in our computation, which are generated by the MATLAB software package \texttt{PolyMesher} \cite{polymesh}.
The rectangular grid shown in Figure \ref{fig2} is obtained by dividing the domain $\Omega=(0,1)^2$ into $N\times N$ uniform small rectangles.

In Table \ref{t1}, we report the errors and the orders of convergence for the WG finite element solutions with $k=1$ and $k=2$ for Example 1. It is expected to see the order of convergence predicted by Theorem \ref{thm:main}, that is, $k$ for $\|u-u_h\|_{1,h}$ and $k+1$ for $\|u-u_0\|_{L^2}$, provided $k\geq 1$. We can see in Table \ref{t1} that these computed orders of convergence are full in agreement with the theoretical findings in Theorem \ref{thm:main}.

\begin{table}[h!]
  \centering \renewcommand{\arraystretch}{1.1}
  \caption{Error profiles and convergence rates of the WG solution for Example \ref{eg1}.}\label{t1}
\begin{tabular}{c|c|cc|cc}
\toprule[1.5pt]
\multirow{2}*{$k$}&\multirow{2}*{Mesh}&\multicolumn{2}{c|}{Polygonal grid}&\multicolumn{2}{c}{Rectangular grid}\\ \cline{3-6}
& & $\|u-u_h\|_{1,h}$&$\|u-u_0\|_{L^2}$&$\|u-u_h\|_{1,h}$&$\|u-u_0\|_{L^2}$     \\
\hline
 \multirow{5}*{1}
 &$4\times 4$&1.38E+00   &1.56E-01&1.63E+00 &2.05E-01\\
 &$8\times 8$&7.52E-01   &4.59E-02&8.66E-01 &5.78E-02\\
 &$16\times 16$&3.91E-01 &1.19E-02&4.39E-01 &1.48E-02\\
 &$32\times 32$&1.97E-01 &2.96E-03&2.20E-01 &3.74E-03\\
 &$64\times 64$&9.73E-02 &7.17E-04&1.10E-01 &9.35E-04\\
 \hline
 \multicolumn{2}{c|}{Rate}&0.96&1.95&0.97&1.95\\
 \hline
 \multirow{5}*{2}
 &$4\times 4$&4.59E-01 & 3.44E-02 &5.31E-01&4.37E-02\\
 &$8\times 8$&1.13E-01 &4.30E-03  &1.39E-01&5.44E-03\\
 &$16\times 16$&2.80E-02 &5.08E-04 &3.58E-02&6.65E-04\\
 &$32\times 32$&6.99E-03 &6.29E-05 &9.09E-03&8.21E-05\\
 &$64\times 64$&1.76E-03 &7.90E-06 &2.29E-03&1.02E-05\\
 \hline
 \multicolumn{2}{c|}{Rate}&2.01&3.03&1.97&3.02\\
 \bottomrule[1.5pt]
\end{tabular}%
\end{table}%

Table \ref{t2} shows the errors and the orders of convergence for the WG finite element solutions with $k=1$ and $k=2$ for Example 2. It is observed that these computed orders of convergence match with the theoretical order of convergence predicated in Theorem \ref{thm:main} again.

\begin{table}[h!]
  \centering \renewcommand{\arraystretch}{1.1}
  \caption{Error profiles and convergence rates of the WG solution for Example \ref{eg2}.}\label{t2}
\begin{tabular}{c|c|cc|cc}
\toprule[1.5pt]
\multirow{2}*{$k$}&\multirow{2}*{Mesh}&\multicolumn{2}{c|}{Polygonal grid}&\multicolumn{2}{c}{Rectangular grid}\\ \cline{3-6}
& & $\|u-u_h\|_{1,h}$&$\|u-u_0\|_{L^2}$&$\|u-u_h\|_{1,h}$&$\|u-u_0\|_{L^2}$     \\
\hline
 \multirow{5}*{1}
 &$4\times 4$&1.46E+00 &1.85E-01&1.58E+00&2.10E-01\\
 &$8\times 8$&6.84E-01 &3.98E-02&8.42E-01&5.57E-02\\
 &$16\times 16$&3.18E-01&8.38E-03&4.30E-01&1.42E-02\\
 &$32\times 32$&1.58E-01&2.05E-03&2.16E-01&3.56E-03\\
 &$64\times 64$&8.31E-02&5.55E-04&1.08E-01&8.92E-04\\
 \hline
 \multicolumn{2}{c|}{Rate}&1.03&2.10&0.97&1.97\\
 \hline
 \multirow{5}*{2}
 &$4\times 4$&3.26E-01 &2.16E-02 &3.59E-01 &2.64E-02\\
 &$8\times 8$&8.84E-02 &3.12E-03 &1.18E-01 &3.88E-03\\
 &$16\times 16$&2.40E-02 &4.29E-04 &3.34E-02 &5.07E-04\\
 &$32\times 32$&5.85E-03 &4.81E-05 &8.81E-03 &6.35E-05\\
 &$64\times 64$&1.57E-03 &6.44E-06 &2.25E-03 &7.92E-06\\
 \hline
 \multicolumn{2}{c|}{Rate}&1.93&2.94&1.84&2.93\\
 \bottomrule[1.5pt]
\end{tabular}%
\end{table}%

\subsection{Efficiency test for the two-grid WG method}
In this subsection, we shall carry out the two-grid WG method stated in Algorithm \ref{alg:two-grid} with $k=1$ on rectangular grids. We will take the coarse mesh size $\tau=h^{1/2}$, where $h$ is the fine mesh size. In this case, the theoretical result predicated by Theorem \ref{thm:two-grid} gives
\begin{align*}
\|u-u^h\|_{1,h}\leq Ch,
\end{align*}
where $u^h$ is the numerical solution obtained by employing the two-grid WG method on the fine mesh $\T_h$.

The computation is performed on a laptop with an Intel Core i7-7Y75 CPU at 1.60 GHz.
In Table \ref{t3} and \ref{t4}, we report the errors in $H^1$-like norm $\|\cdot\|_{1,h}$ of the WG method and the two-grid WG method on the same mesh level and their computational times (in second) for Example \ref{eg1} and \ref{eg2}.
It can be observed that (1) the two-grid WG method has the  convergence rate of $O(h)$ in the $H^1$-like norm $\|\cdot\|_{1,h}$, which is expected as Theorem \ref{thm:two-grid}; (2) both the WG method and the two-grid WG method have the similar accuracy on the same mesh level; (3) the cost of time for the two-grid WG method is significantly less than the one of WG method.

\begin{table}[h!]
  \centering \renewcommand{\arraystretch}{1.1}
  \caption{Comparisons of the errors and the CPU times of the WG method and the two-grid WG method for Example \ref{eg1}, $\tau=h^{1/2}$.}\label{t3}
\begin{tabular}{c|cc|cc}
\toprule[1.5pt]
\multirow{2}*{$h$}&\multicolumn{2}{c|}{WG method}&\multicolumn{2}{c}{Two-grid WG method}\\ \cline{2-5}
 & $\|u-u_h\|_{1,h}$&CPU time (sec.)&$\|u-u^h\|_{1,h}$&CPU time (sec.)  \\
\hline
 $1/4$&1.63E+00 &0.7031&1.66E+00&0.4375\\
 $1/16$&4.39E-01 &7.2031&4.76E-01&2.0469\\
 $1/36$&1.96E-01&33.4375&2.24E-01&6.8125\\
 $1/64$&1.10E-01&108.0156&1.28E-01&19.2500\\
 $1/100$&7.06E-02&271.5781&8.29E-02&53.8750\\
 \hline
 \multicolumn{1}{c|}{Rate}&0.98&&0.93&\\
 \bottomrule[1.5pt]
\end{tabular}%
\end{table}%

\begin{table}[h!]
  \centering \renewcommand{\arraystretch}{1.1}
  \caption{Comparisons of the errors and the CPU times of the WG method and the two-grid WG method for Example \ref{eg2}, $\tau=h^{1/2}$.}\label{t4}
\begin{tabular}{c|cc|cc}
\toprule[1.5pt]
\multirow{2}*{$h$}&\multicolumn{2}{c|}{WG method}&\multicolumn{2}{c}{Two-grid WG method}\\ \cline{2-5}
 & $\|u-u_h\|_{1,h}$&CPU time (sec.)&$\|u-u^h\|_{1,h}$&CPU time (sec.)  \\
\hline
 $1/4$&1.58E+00 &0.8125&1.57E+00&0.8594\\
 $1/16$&4.30E-01&7.8438&4.79E-01&2.9063\\
 $1/36$&1.92E-01&44.6563&2.25E-01&10.0000\\
 $1/64$&1.08E-01&111.9688&1.28E-01&23.9063\\
 $1/100$&6.93E-02&283.5781&8.26E-02&71.6719\\
 \hline
 \multicolumn{1}{c|}{Rate}&0.97&&0.91&\\
 \bottomrule[1.5pt]
\end{tabular}%
\end{table}%
{
\section{Conclusion}
In this paper,  a WG method is proposed and analyzed for
the quasi-linear elliptic problem of non-monotone type. With the use of {\color{red}Brouwer's} fixed point theorem, the existence of WG solution and error estimates in both the energy norm and the $L^2$ norm are derived. Moreover, a two-grid
WG method is introduced and its corresponding error estimates is analyzed
in the energy norm. Those theoretical results
are verified by numerical results. }

{\bf Declarations}\\[-5pt]

{\bf CRediT authorship contribution statement}\\

{\bf Peng Zhu}: Methodology, Software, Writing-review $\&$ editing. {\bf Shenglan Xie}: Conceptualization, Formal analysis, Writing-original draft

{\bf Conflict of interest} The authors declare no conflict interests.

\end{document}